\documentclass[reqno]{amsart}
\usepackage[colorlinks]{hyperref}
\usepackage{amssymb,amsmath,amsthm,subfig,graphicx,hypcap,todonotes,enumerate}
\usepackage[margin=1.25in]{geometry}
\usepackage[all]{xy}
\captionsetup[subfigure]{style=default, margin=0pt, parskip=0pt, hangindent=0pt, indention=0pt, singlelinecheck=true, labelfont=up}

\DeclareMathOperator{\crit}{crit}

\DeclareMathOperator{\im}{im}
\DeclareMathOperator{\Sp}{Span}
\DeclareMathOperator{\Kr}{Ker}
\DeclareMathOperator{\ind}{ind}

\DeclareMathOperator{\mcg}{MCG}
\DeclareMathOperator{\Sym}{Sym}
\DeclareMathOperator{\id}{id}

\newcommand{\co}{\colon\thinspace}

\newcommand{\C}{\mathbb{C}}
\newcommand{\R}{\mathbb{R}}
\newcommand{\Z}{\mathbb{Z}}

\newcommand{\x}{\mathbf{x}}
\newcommand{\y}{\mathbf{y}}
\newcommand{\z}{\mathbf{z}}
\newcommand{\w}{\mathbf{w}}

\newcommand{\cM}{\mathcal{M}}
\newcommand{\ba}{\boldsymbol{\alpha}}
\newcommand{\bb}{\boldsymbol{\beta}}
\newcommand{\bth}{\boldsymbol{\theta}}

\newcommand{\cD}{\mathcal{D}}
\newcommand{\cP}{\mathcal{P}}
\newcommand{\ff}{\mathfrak{f}}

\newcommand{\fg}{\mathfrak{g}}
\newcommand{\fH}{\mathfrak{H}}
\newcommand{\fm}{\mathfrak{m}}
\newcommand{\fn}{\mathfrak{n}}
\newcommand{\fo}{\mathfrak{o}}

\newcommand{\arx}{\vec{\mathbf{x}}}
\newcommand{\arz}{\vec{\mathbf{z}}}

\newcommand{\csf}{???}
\newcommand{\MYhref}[3][blue]{\href{#2}{\color{#1}{#3}}}

\theoremstyle{definition}
\newtheorem{thm}{Theorem}[section]
\newtheorem{lemma}[thm]{Lemma}
\newtheorem{ex}[thm]{Example}
\newtheorem{rmk}[thm]{Remark}
\newtheorem{prop}[thm]{Proposition}
\newtheorem{cor}[thm]{Corollary}
\newtheorem{df}[thm]{Definition}
\newtheorem{claim}[thm]{Claim}

\newtheorem{quest}[thm]{Question}
\newtheorem{sublemma}[thm]{Sublemma}
\title{Holomorphic polygons and smooth 4-manifold invariants}
\author{Jonathan Williams}
\begin{document}
\begin{abstract}
Any smooth, closed oriented 4-manifold has a surface diagram of arbitrarily high genus $g>2$ that specifies it up to diffeomorphism. The goal of this paper is to prove the following statement: For any smooth, closed oriented 4-manifold $M$, there is a sequence of weak $A$-infinity algebras indexed by $g$, and the homotopy equivalence class of each entry of this sequence is a diffeomorphism invariant of $M$.\end{abstract}
\maketitle
\tableofcontents
\begin{section}{Introduction}It is well known that generally applicable theories of holomorphic curve invariants have contributed much to the fields of symplectic topology and 3-manifolds, but such tools have not yet appeared for general smooth, closed oriented 4-manifolds. This paper applies ideas from \cite{AJ} and \cite{L} to surface diagrams, which are known to exist for any smooth, closed oriented $4$-manifold. On a basic level, applying a cylindrical Heegaard-Floer type of construction when the Lagrangians are immersed does not result in a homology group, but in a \emph{weak $A_\infty$ algebra}. This is a more complicated algebraic object than homology groups that can still record information about the relationship between domains and intersection points in a surface decorated with simple closed curves. The goal of this paper is to prove the following theorem, for which the necessary vocabulary follows. \begin{thm}\label{thm}For any smooth, closed oriented 4-manifold $M$ equipped with an admissible surface diagram of genus $g>2$, there is a weak $A_\infty$ algebra $(\cP,\fm)(M,g)$ over $\Z_2$. The homotopy equivalence class of $(\cP,\fm)(M,g)$ is a diffeomorphism invariant of $M$ for each $g$.\end{thm}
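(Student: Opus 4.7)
The plan is to construct $(\cP, \fm)(M,g)$ from the combinatorial data of an admissible genus-$g$ surface diagram $(\Sigma, \ba_1, \ldots, \ba_n)$ by imitating the cylindrical formulation of Heegaard Floer homology due to Lipshitz \cite{L}, but with the essential difference that the tori $T_{\ba_i} \subset \Sy$ are only immersed. First I would take as generators the transverse intersection points of consecutive $T_{\ba_i}$ (after small Hamiltonian perturbation where needed), assembled into the free $\Z_2$-module $\cP$. The operations $\fm_n$ would be defined by counting, modulo $2$, rigid pseudo-holomorphic maps from an $(n+1)$-pointed disk into $\Sigma \times [0,1] \times \R$ with boundary conditions prescribed by cylindrical lifts of the tori, as in \cite{L}. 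Immersedness of the Lagrangians forces a nonzero curvature term $\fm_0$ counting holomorphic teardrops at the self-intersection points, following the Akaho--Joyce \cite{AJ} framework for immersed Lagrangian Floer theory; this is exactly what makes the resulting structure a \emph{weak} $A_\infty$ algebra rather than an honest one.

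With the operations in hand, I would verify the weak $A_\infty$ relations by the usual Gromov compactness argument on one-dimensional components of the polygon moduli spaces: codimension-one boundary strata decompose into products of lower-dimensional moduli spaces, and their $\Z_2$-count must vanish. Admissibility of the surface diagram is invoked here to rule out energy blow-up and to guarantee that each $\fm_n$ is a finite sum. Standard transversality for generic cylindrical almost complex structures, combined with the immersed Lagrangian machinery of \cite{AJ}, make each count well-defined once the moduli spaces are cut out transversely.

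For invariance at fixed $g$ I would proceed in stages. First, independence of the almost complex structure and of the Hamiltonian perturbations needed for transversality: one constructs continuation morphisms by counting polygons over an interpolating family, and standard arguments promote these to a homotopy equivalence of weak $A_\infty$ algebras. Second, automorphisms of the indexing of the curve systems should yield isomorphic structures on the nose. Third, and hardest, invariance under the moves relating any two admissible surface diagrams of $M$ of the same genus $g$ --- handleslides and isotopies of curves within each $\ba_i$ --- must be established by exhibiting explicit chain-level $A_\infty$ homotopy equivalences, typically realized by introducing an auxiliary curve system and using the associated polygon-counting map.

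The main obstacle I anticipate is this last stage: producing homotopy equivalences for the diagram moves in the immersed setting. In embedded Heegaard Floer one may pass to homology and verify a chain-level quasi-isomorphism, but here the curvature $\fm_0$ forbids passing to homology and one must work intrinsically with the weak $A_\infty$ structure and its Maurer--Cartan formalism. Controlling bubbled configurations that involve constant holomorphic polygons at self-intersection points of the immersed tori, and verifying that the proposed equivalence intertwines $\fm_0$ on the two sides of a move, is the key technical input. Once this is in place the theorem will follow by induction on the length of a sequence of moves connecting any two admissible surface diagrams of $M$ at genus $g$.
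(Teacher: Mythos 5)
Your broad strategy matches the paper's: cylindrical Lipshitz-style polygon counting in $\Sigma\times\Delta_{n+1}$ with boundary on cylinders over a perturbed copy of $\Gamma$, a curvature term $\fm_0$ arising because the Lagrangian is immersed (one circle per vanishing cycle, with the $\gamma_i$ allowed to intersect each other), and invariance proved by continuation/triangle-type maps promoted to $A_\infty$ homotopy equivalences. The admissibility condition and compactness/transversality arguments you sketch are also in the spirit of the paper.

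There is, however, a genuine structural gap in your invariance argument. For the theorem to read ``for each $g$,'' one must first know precisely which moves connect two admissible genus-$g$ surface diagrams of $M$, and then prove invariance only under \emph{those} moves. By the uniqueness theorem of \cite{W2}, surface diagrams coming from a fixed homotopy class are related by diffeomorphism of $\Sigma$, isotopy, handleslide, multislide, shift, \emph{and stabilization} --- and the stabilization move changes $g$. Your plan never addresses this: it silently assumes two same-genus diagrams are connected by genus-preserving moves, which is not a formality (it fails, for instance, for Heegaard splittings of 3-manifolds, cf.\ \cite{HTT}). The paper devotes Lemma~\ref{stablem} to exactly this point, showing that any sequence of moves of the form ``stabilize, genus-preserving moves, destabilize'' (or its reverse) can be replaced by a sequence of genus-preserving moves alone, by rearranging the associated deformations of wrinkled fibrations. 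Without this lemma, your ``induction on the length of a sequence of moves connecting any two admissible surface diagrams at genus $g$'' has no base to stand on. Relatedly, you only mention handleslides and isotopies among the moves; the multislide and shift moves are peculiar to surface diagrams and must be treated, and the paper's Lemma~\ref{slidelem} reducing all three moves to sequences of elementary slides is the key device that makes the subsequent triangle-map arguments tractable. You should locate analogous results before the rest of the outline can close.

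A smaller issue: you frame $\fm_0$ as counting teardrops at self-intersections of a torus in $\Sym^k(\Sigma)$, but the paper never establishes a symmetric-product version of the theory (it explicitly flags that the tori fail to be totally real at isolated points, so Fredholm theory in $\Sym^k(\Sigma)$ is not known to go through); $\fm_0$ is instead defined via a count in the half-plane model $W_1^{\ell/(n+1)}$ in the cylindrical picture. Your outline should commit to one side of the tautological correspondence and justify the Fredholm setup there.
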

\begin{subsubsection}*{Acknowledgments}\ The author would like to thank Timothy Perutz, Michael Usher and Christopher Woodward for helpful conversations, and Robert Lipshitz for finding an important error in an earlier draft.\end{subsubsection}
\end{section}
\begin{section}{Basic definitions and notation}\label{basicdefs}
As in \cite{W2}, define the surface diagram $(\Sigma,\Gamma)$ for $M$ as one coming from a simplified purely wrinkled fibration $f\co M\to S^2$, where $\Gamma=\{\gamma_1,\ldots,\gamma_k\}$ is the $\Z/k\Z$-indexed collection of vanishing cycles in $\Sigma$ of $f$ (in this paper, $k$ is always $|\Gamma|$). Because every 4-manifold has a surface diagram coming from a map that is homotopic to a constant map $M\to p\in S^2$, and $H_2(M)$ is in some sense fully expressed by the surface diagram in that case (see Proposition~\ref{3handles}), we assume throughout that $f$ is homotopic to a constant map. Note, however, that the invariance results in this paper apply equally well to other homotopy classes of maps, so that the algebra $(\cP,\fm)$ in Theorem~\ref{thm} could be generalized to be an invariant of the triple $(M,g,[f])$, where $[f]$ is the homotopy class of a continuous map $f\co M\to S^2$. The relationship between algebras for varying $g$ and $[f]$ is currently unknown.

\begin{subsection}{Weak \emph{A}-infinity algebras}\begin{df}In this paper, a \emph{weak $A_\infty$ algebra} is a $\Z_2$ vector space $\cP$ and a sequence $\fm$ of $\Z_2$-multilinear maps $\fm_n\co\cP^{\times n}\to\cP$, defined for all $n\geq0$, satisfying the $A_\infty$ relation
\begin{equation}\label{eq:ainftyrlns}\sum\limits_{i+j=n+1}\sum\limits^{n-j+1}_{\ell=1}\fm_i\left(a_1,\ldots,a_{\ell-1},\fm_j\left(a_\ell,\ldots,a_{\ell+j-1}\right),a_{\ell+j},\ldots,a_n\right)=0\end{equation}
for each $n\geq0$ and each $n$-tuple of generators. Here, $\cP^{\times0}=1\in\Z_2$, so that $\fm_0\in\cP$.\end{df}

In the language of \cite{AJ}, $(\cP,\fm)$ is an ungraded \emph{weak} $A_\infty$ algebra over $\Z_2$. The term \emph{curved} $A_\infty$ algebra also sometimes refers to an $A_\infty$ algebra with $\fm_0\neq0$, but this author reserves that word for those whose maps $(\fm_n)_{n>0}$ are understood to have been deformed according to a nonzero $\fm_0$ as in \cite[Definition~3.19]{AJ}. We use the symbol $\cP$ for \emph{preliminary}. Because the moduli spaces seem likely to have coherent orientations and the Lagrangians are all orientable, the lack of grading and signs may soon be rectified with, for example, a suitable version of \cite[Definition~4.4]{AJ} or simply employing the straightforward $\Z_2$ grading on $\cP$ according to whether the intersections of Lagrangians that represent generators of $\cP$ are positive or negative. For this reason, at the very least, the author anticipates a moderate elaboration of the current work to produce a weak $A_\infty$ algebra with coefficients in $\Z$ rather than $\Z_2$. In later work the author may further enrich the structure to a \emph{gapped filtered} $A_\infty$ algebra; see \cite[Section~3.5]{AJ} for definitions.

Section~\ref{homotopypreliminaries} has a discussion of various relative gradings according to integers and homological data of $M$ that already make sense for $\fm_1$, and if it turns out that a particular algebra as constructed in this paper admits what is called a \emph{bounding cochain}, so that $\fm_1$ can be deformed into a differential, then perhaps these gradings will become useful (see \cite[Section~3.6]{AJ} for discussion of bounding cochains). For now, much of Section~\ref{homotopypreliminaries} serves merely as an initial attempt to connect the generators of $\cP$ and the curves counted by $\fm$ to the 4-manifold $M$.
\begin{df}A \emph{morphism of weak $A_\infty$ algebras} $\ff\co(\cP^1,\fm)\to(\cP^2,\fn)$ is a sequence of $\Z_2$-multilinear maps $(\ff_n)_{n\geq1}$, where each $\ff_n\co(\cP^1)^{\times n}\to\cP^2$ satisfies 
\begin{equation}\label{eq:morphismrlns}\begin{array}{c}
\sum\limits_{1\leq\ell\leq j\leq n}\ff_{n-j+\ell+1}\left(a_1,\ldots,a_{\ell-1},\fm_{j-\ell}(a_\ell,\ldots,a_{j-1}),a_j,\ldots,a_n\right)\\ = \sum\limits_{0<n_1<\cdots<n_\ell=n}\fn_\ell(\ff_{n_1}(a_1,\ldots,a_{n_1}),\ff_{n_2-n_1}(a_{n_1+1},\ldots,a_{n_2}),\ldots,\ff_{n_\ell-n_{\ell-1}}(a_{n_{\ell-1}+1},\ldots,a_{n_\ell})).\end{array}\end{equation}
For morphisms $\ff\co(\cP^1,\fm)\to(\cP^2,\fn)$ and $\fg\co(\cP^2,\fn)\to(\cP^3,\fo)$, the \emph{composition} $\fg\circ\ff\co(\cP^1,\fm)\to(\cP^3,\fo)$, which is associative, is given by
\begin{equation}\label{eq:comp}\begin{array}{rr}&(\fg\circ\ff)_n(a_1,\ldots,a_n)=\sum\limits_{0<k_1<\cdots<k_\ell=n}\fg_\ell(\ff_{k_1}(a_1,\ldots,a_{k_1}),\ff_{k_2-k_1}(a_{k_1+1},\ldots,a_{k_2}),\\&\ldots,\ff_{k_\ell-k_{\ell-1}}(a_{k_{\ell-1}+1},\ldots,a_{k_\ell})).\end{array}\end{equation}\end{df}
\begin{df}Let $\ff,\fg\co(\cP^1,\fm)\to(\cP^2,\fn)$ be morphisms of weak $A_\infty$ algebras. A \emph{homotopy from $\ff$ to $\fg$} is $\fH=(\fH_n)_{n\geq1}$, where each $\fH_n\co(\cP^1)^{\times n}\to\cP^2$ is a $\Z_2$-multilinear map satisfying
\begin{equation}\label{eq:htpyrlns}\begin{array}{c}\ff_n(a_1,\ldots,a_n)+\fg_n(a_1,\ldots,a_n)=\\ \  \\ \sum\limits_{\substack{0<j_1<j_2<\cdots<j_\ell<\\k_1<k_2<\cdots<k_m=n}}\begin{array}{l}
\fn_{\ell+m+1}(\ff_{j_1}(a_1,\ldots,a_{j_1}),\ff_{j_2-j_1}(a_{j_1+1},\ldots,a_{j_2}),\ldots,\\
\ff_{j_\ell-j_{\ell-1}}(a_{j_{\ell-1}},\ldots,a_{j_l}),\fH_{k_1-j_\ell}(a_{j_\ell+1},\ldots,a_{k_1}),\\
\fg_{k_2-k_1}(a_{k_1+1},\ldots,a_{k_2}),\ldots,\fg_{k_m-k_{m-1}}(a_{k_{m-1}+1},\ldots,a_{k_m}))\end{array}\\ \ \\
+\sum\limits_{i+j=n+1}\sum\limits^{n-j+1}_{\ell=1}\fH_i\left(a_1,\ldots,a_{\ell-1},\fm_{j-\ell}(a_\ell,\ldots,a_{j-1}),a_j,\ldots,a_n\right).\end{array}\end{equation}
We say \emph{$\ff$ is homotopic to $\fg$} when there is a homotopy $\fH$ from $\ff$ to $\fg$.\end{df}
\begin{df}The \emph{identity morphism} $\id_\cP\co(\cP,\fm)\to(\cP,\fm)$ is the morphism such that 
\[(\id_\cP)_n(a_1,\ldots,a_n)=\left\{\begin{array}{rc}a_1,&n=1\\0,&n>1\end{array}\right.\]\end{df}
\begin{df}For a morphism $\ff\co(\cP^1,\fm)\to(\cP^2,\fn)$ of weak $A_\infty$ algebras, a \emph{homotopy inverse} is a morphism $\fg\co(\cP^2,\fn)\to(\cP^1,\fm)$ such that $\fg\circ\ff$ and $\ff\circ\fg$ are both homotopic to the identity morphisms on $(\cP^1,\fm)$, $(\cP^2,\fn)$, respectively. The morphism $\ff$ is a \emph{homotopy equivalence} when $\ff$ has a homotopy inverse.\end{df}\end{subsection}

\begin{subsection}{Generators and moduli spaces}\label{spaces}In this paper, $\Sigma$ is understood to come with an embedding $\Sigma\hookrightarrow M$ as a fiber of $f$, so that for example there is a canonical induced map $H_\bullet(\Sigma)\to H_\bullet(M)$. The elements of $\Gamma$ are called attaching circles in the context of handlebody decompositions, or vanishing cycles in the context of stable maps to the sphere. In this paper, they are called \emph{circles}, and \emph{curves} refer to 
$J$-holomorphic curves. Choose orientations for the elements of $\Gamma$ once and for all and assume all circles intersect at right angles in $\Sigma$, for some chosen Riemannian metric on $\Sigma$. For each $\gamma_i\in\Gamma$ and for $t\in[0,1]$, let $P_i^t$ be a small perturbation generated by the vector field $X_i$ satisfying the following properties, where $\gamma_i^t$ is the result of flowing $\gamma_i$ according to $P_i^s,$ $s\in[0,t]$ and $\Gamma^t=\{\gamma_i^t\}_{i=1,\ldots,k}$.\begin{enumerate}
\item $X_i$ is supported near $\gamma_i$, and near all intersections with $\gamma_j$, $j\neq i$, $\Sigma$ has local coordinates in the $xy$ plane in which $X_i$ is given by $\partial/\partial y$ and $\gamma_i=\{y=0\}$.
\item For $0\leq t_1<t_2\leq1$, $\gamma_i^{t_1}$ intersects $\gamma_i^{t_2}$ at two transverse points.\end{enumerate}
Property (1) implies the $k$ perturbations $P_1,\ldots,P_k$ commute, so it makes sense to refer to them as a single perturbation. 
\begin{df}The collection $\left\{P_1,\ldots,P_k\right\}$ is called the \emph{main perturbation} of $\Gamma$.\end{df}
For any chosen pair $0\leq t_1<t_2\leq1$, a generator of $\cP$ is represented by a $k$-tuple of points $\x=\{x_1,\ldots,x_k\}$ where $x_i\in\gamma^{t_1}_i\cap\gamma^{t_2}_{s(i)}$ and $s$ is an element of the symmetric group on $k$ letters. This is similar to the definition of generators in \cite{L}, in that the two perturbations of $\Gamma$ are analogous to $\ba$ and $\bb$. For $0\leq t_1<t_2<t_3\leq1$, there is a correspondence between such $k$-tuples for the pairs $(t_1,t_2)$ and $(t_2,t_3)$ obtained by flowing each entry $x_i$ according to the perturbation $P_i^{t}$, $t\in[t_1,t_3]$, for each $i\in1,\ldots,k$, and relabeling it $x_{s(i)}$. In this perturbation, the intersection point travels along $\gamma_{s(i)}^{t_2}$ from an intersection with $\gamma_i^{t_1}$ to its nearest intersection with $\gamma_i^{t_3}$. Thus, the corresponding intersection point lies in $\gamma_{s(i)}^{t_2}\cap\gamma_i^{t_3}$, and, according to the notation chosen for $k$-tuples, the $\ell^{th}$ entry must lie in $\gamma_\ell^{t_2}\cap\gamma_{s(\ell)}^{t_3}$, hence the relabeling. This correspondence gives an equivalence relation between $k$-tuples for any choice of pairs $(t_n,t_{n+1})$ for which $n=1,\ldots,m$ and $0\leq t_n<t_{n+1}\leq1$, and a generator for the algebra $\cP$ is an equivalence class of such $k$-tuples. Here follows the definition of the spaces in which the holomorphic curves counted by $\fm$ will live.
\begin{df}For $n\geq1,$ let $W_{n+1}=\Sigma\times\Delta_{n+1}$, where $\Delta_{n+1}$ is conformally equivalent to the closed unit disk with $n+1$ boundary punctures: \[\left\{z\in\C:|z|\leq1\right\}\setminus\left\{e^{2\pi i\cdot\frac{1}{n+1}},e^{2\pi i\cdot\frac{2}{n+1}},\ldots,e^{2\pi i\cdot\frac{n+1}{n+1}}\right\}.\] Label each edge $e^{\ell/(n+1)}$ of $\Delta_{n+1}$ connecting $e^{2\pi i\cdot\frac{\ell-1}{n+1}}$ to $e^{2\pi i\cdot\frac{\ell}{n+1}}$ with its standard orientation in $\C$, so that $e^{1/(n+1)}$ is the first edge traveling counter-clockwise from 1 to $e^{2\pi i/(n+1)}$, and label the common limit point of $e^{\ell/(n+1)}$ and $e^{(\ell+1)/(n+1)}$ by $v^{\ell/(n+1)}$, so that $v^{(n+1)/(n+1)}=1\in\C$. When it is not otherwise specified, $W_{n+1}$ comes decorated with the $(n+1)k$ cylinders 
\[C_i^{\ell/(n+1)}=\gamma^\frac{\ell}{n+1}_i\times e^{\ell/(n+1)},\]
where $1\leq i\leq k$ and $1\leq\ell\leq n+1$.\end{df} 
\begin{figure}[h]\capstart\begin{center}\includegraphics{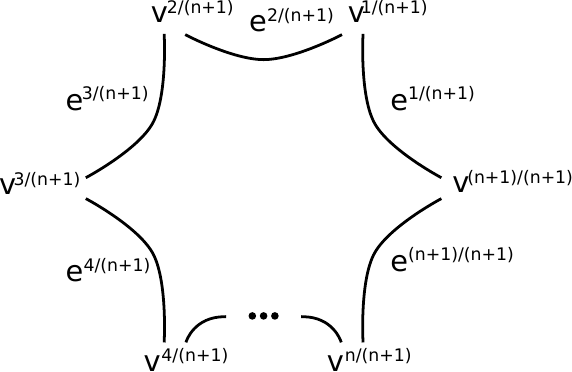}\end{center}\caption{Labeling of the disk $\Delta_{n+1}$ used in the definition of $\fm_n$ below.\label{polygon}}\end{figure}
Now it is time to give $W_{n+1}$ \emph{cylindrical ends} in the sense of \cite{L,BEHWZ}. Define the projections $\pi_{\Delta_{n+1}}\co W_{n+1}\to\Delta_{n+1}$, $\pi_\Sigma\co W_{n+1}\to\Sigma$, and $\pi_\R\co W_2\to\R$.
\begin{df}For a given $v^{\ell/(n+1)}$, fix a point $z_i$ in each component $D_i$ of $\Sigma\setminus\left(\Gamma^\frac{\ell}{n+1}\cup\Gamma^\frac{\ell+1}{n+1}\right)$. Let $j_\Sigma$ be a complex structure on $\Sigma$ tamed by an area form $dA$, and equip $[0,1]\times\R$, a strip that has coordinates $(s,t)$, with the area form $ds\wedge dt$. Further, let $\omega=ds\wedge dt+dA$, a split symplectic form on $\Sigma\times[0,1]\times\R$. Call an almost complex structure $J=J^{\ell/(n+1)}$ on $\Sigma\times[0,1]\times\R$ \emph{cylindrical} if it satisfies the following conditions.\begin{itemize}\item[({\bf 
J$^\ell$1})]\hypertarget{j1} $J$ is tamed by the split symplectic form $\omega$.\item[({\bf 
J$^\ell$2})]\hypertarget{j2} In a cylindrical neighborhood $U_i$ of $\{z_i\}\times[0,1]\times\R$, $J=j_\Sigma\times j_{[0,1]\times\R}$ is split. Here, $[0,1]\times\R$ is taken as a subset of $\C$ and $U_i$ is small enough that its closure does not intersect $\left(\Gamma^\frac{\ell}{n+1}\cup\Gamma^\frac{\ell+1}{n+1}\right)\times[0,1]\times\R$.\item[({\bf 
J$^\ell$3})]\hypertarget{j3} $J$ is translation invariant in the $\R$ factor.\item[({\bf 
J$^\ell$4})]\hypertarget{j4} $J(\partial/\partial s)=\partial/\partial t$.\end{itemize}\end{df} 
Let $B^{\ell/(n+1)}$ denote an open disk centered at $v^{\ell/(n+1)}$ with radius $\varepsilon_n<\frac{1}{4}|v^{\ell/(n+1)}-v^{(\ell+1)/(n+1)}|$ and let $\overline{W_{n+1}}$ denote the complement
\[\overline{W_{n+1}}=W_{n+1}\setminus\bigcup\limits_{\ell=1}^{n+1}\Sigma\times B^{\ell/(n+1)}.\]
\begin{df}Pick cylindrical $J^{1/(n+1)},\ldots,J^{(n+1)/(n+1)}$ and a point $z_i$ in each component of $\Sigma\setminus\bigcup_{\ell=1}^{n+1}\Gamma^\frac{\ell}{n+1}$. Let $\omega=dA_\Sigma+dA_{\Delta_{n+1}}$ be the split symplectic form on $W_{n+1}$. Call an almost complex structure $J=J_{n+1}$ on $W_{n+1}$ \emph{admissible} if it satisfies the following.\begin{itemize}\item[({\bf 
J1})]\hypertarget{J1} $J$ is tamed by the split symplectic form on $W_{n+1}$.
\item[({\bf 
J2})]\hypertarget{J2} In a neighborhood $U_i$ of $\{z_i\}\times\Delta_{n+1}$, $J=j_\Sigma\times j_{\Delta_{n+1}}$ is split. Here, $U_i$ is small enough so that its closure is disjoint from $\bigcup_{\ell=1}^{n+1}\Gamma^\frac{\ell}{n+1}\times\Delta_{n+1}$.
\item[({\bf 
J3})]\hypertarget{J3} Near each end $\Sigma\times\{v^{\ell/(n+1)}\}$, $J$ agrees with $J^{\ell/(n+1)}$. 
\item[({\bf 
J4})]\hypertarget{J4} The projection $\pi_{\Delta_{n+1}}$ is holomorphic and each fiber of $\pi_\Sigma$ is holomorphic.
\item[({\bf 
J5})]\hypertarget{J5} There is a 2-plane distribution $\xi$ on $\overline{W_{n+1}}$ such that $\omega|_\xi$ is nondegenerate, $J$ preserves $\xi$, and the restriction of $J$ to $\xi$ is compatible with $\omega$. In addition, $\xi$ is tangent to $\Sigma$ near $C_i^{\ell/(n+1)}\cap\overline{W_{n+1}}$ and $\overline{W_{n+1}}\cap\overline{B^{\ell/(n+1)}}$ for all $i,\ell$.\end{itemize}
\end{df}
The definitions above are analogues of those in \cite[Section~10.2]{L}, with (\hyperlink{j5}{{\bf J5}}) being the analogue of Lipshitz's condition ({\bf J$'$5}), for polygons. We immediately use that condition instead of Lipshitz's more standard condition ({\bf J5}) because something akin to the so-called \emph{annoying curves} in $\partial W_{n+1}$ will be an unavoidable part of the theory in this paper. From now on, we assume $W_{n+1}$ is equipped with cylindrical ends as specified by the product symplectic structure and admissible $J$.

Let $\mathcal{M}_{n+1}$ denote the moduli space of Riemann surfaces $S$ with boundary, $nk$ negative punctures $\mathbf{p}^\ell=\{p^\ell_1,\ldots,p^\ell_k\}$ for $1\leq\ell\leq n$ and $k$ positive punctures $\mathbf{q}=\{q_1,\ldots,q_k\}$, all on the boundary of $S$. The surface $S$ is compact away from the punctures and is considered modulo automorphism. For admissible $J_{n+1}$, this paper makes use of the space of holomorphic maps $u\co S\to W_{n+1}$ such that\begin{itemize}\item[({\bf 
M0})]\hypertarget{m0} The source $S$ is smooth.\item[({\bf 
M1})]\hypertarget{m1} $u(\partial S)\subset\bigcup\limits_{i,\ell}C^{\ell/(n+1)}_{i}$.\item[({\bf 
M2})]\hypertarget{m2} There are no components of $S$ on which $\pi_{\Delta_{n+1}}\circ u(S)$ is constant.\item[({\bf 
M3})]\hypertarget{m3} For each $(\ell,i)$, $u^{-1}\left(C^{\ell/(n+1)}_{i}\right)$ consists of exactly one boundary arc of $S$.\item[({\bf 
M4})]\hypertarget{m4} $\lim\limits_{w\to p^\ell_i}\pi_{\Delta_{n+1}}\circ u(w)=v^{\ell/(n+1)}$ and $\lim\limits_{w\to q_i}\pi_{\Delta_{n+1}}\circ u(w)=v^{(n+1)/(n+1)}=1$.\item[({\bf 
M5})]\hypertarget{m5} The \emph{energy} of $u$, defined in \cite[Section~5.3]{BEHWZ}, is finite.\item[({\bf 
M6})]\hypertarget{m6} $u$ is an embedding.\end{itemize}
It follows from \cite[Proposition~5.8]{BEHWZ} and the choice of orientations for the ends that near each $\mathbf{p}^\ell$, a curve satisfying (\hyperlink{m0}{{\bf M0}})-(\hyperlink{m6}{{\bf M6}}) converges exponentially in the $-\R$ direction of the cylindrical end at $v^{\ell/(n+1)}$ to $\x^{\ell/(n+1)}\times[0,1]$, and near $\mathbf{q}$ it converges exponentially in the $+\R$ direction of the cylindrical end at $v^{(n+1)/(n+1)}$ to $\y$, for some representatives $\x^{1/(n+1)},\ldots,\x^{n/(n+1)},\y$ of generators of $\cP$.
\begin{df}\label{vocab}\ \begin{itemize}
\item At various times it will be useful to consider the collections $\Gamma^{t_1}$, $\Gamma^{t_2}$ for some $0\leq t_1<t_2\leq1$. Denote these by $\bb$, $\ba$, respectively, and their elements $\beta_i$, $\alpha_i$, $i=1,\ldots,k$. 
\item Unless otherwise stated, the symbol $\arx$ always denotes an $n$-tuple of generators
\[\arx=\left(\x^{1/(n+1)},\ldots,\x^{n/(n+1)}\right).\] Denote the collection of homotopy classes of maps satisfying (\hyperlink{m0}{{\bf M0}})-(\hyperlink{m5}{{\bf M5}}) for fixed $\arx,\y$ by $\pi_2(\arx,\y)$.
\item Given a collection of circles $C\subset\Sigma$, a \emph{region} is a connected component of $\Sigma\setminus C$. 
\item The \emph{domain of $\varphi\in\pi_2(\arx,\y)$} is the linear combination of regions whose coefficients are given by counting multiplicities in $\pi_\Sigma(\varphi)$ with sign. In particular, the domain of $\varphi$ neglects any \emph{trivial disks}: components of $\varphi$ given by $\{p\}\times\Delta_2$ for some $p\in\Sigma$. A domain is always assumed to come from some $\varphi\in\pi_2(\arx,\y)$.
\item A \emph{periodic domain} $q$ is an element of $\pi_2(\x,\y)$ such that $\partial q$ is a sum of elements of $\ba$, $\bb$. Sometimes the difference of the domains of two elements of $\pi_2(\arx,\y)$ will be called periodic.
\item A domain is \emph{positive} if all of its coefficients are nonnegative. More generally, for domains $A$ and $B$, $A<B$ means $n_{z_i}(A)<n_{z_i}(B)$ for all $z_i$.
\item For $A\in\pi_2(\arx,\y)$, define $\mathcal{M}^A$ to be the moduli space of holomorphic maps $u\co S\to W_{n+1}$ in the homology class $A$ satisfying (\hyperlink{m0}{{\bf M0}})-(\hyperlink{m6}{{\bf M6}}), and define \[\mathcal{M}(\arx,\y)=\bigcup\limits_{A\in\pi_2(\arx,\y)}\mathcal{M}^A.\] For $n=1$, mod out by translation in $W_2$ by setting $\widehat{\cM}(\x,\y)=\cM(\x,\y)/\R$.\end{itemize}\end{df}
Note that this paper does not make use of a base point, so the notion of \emph{periodic domain} is slightly different from that of Heegaard-Floer theory. It seems at least plausible that this theory could be enriched by adding a basepoint, though the necessary invariance results would not (for example) be an obvious adaptation of  \cite[Figure~3]{OS}. Even though the definition of the algebra $(\cP,\fm)(M,g)$ will not appear until Section~\ref{algdef}, this seems important enough to be highlighted as a formal question.
\begin{quest}Choose a point $z$ in the complement of $\bigcup_{t\in[0,1]}\Gamma^t$ and consider the hat, $\pm$ and $\infty$ constructions of $(\cP,\fm)(M,g)$, analogous to those in \cite{L}, that could result from this choice.\begin{enumerate}[(a)]
\item Would such a construction yield a smooth 4-manifold invariant?
\item What would be the geometric significance of the resulting gradings?\end{enumerate}\end{quest}
There is reason to be suspicious that, without a base point, our algebras are determined by homological data specified by the surface diagram: Heegaard-Floer homology is known to have such a property in various examples. On the other hand, the maps $\fm_n$ for $n>1$ could encode information about how that data is expressed in the surface diagram that is not detected by $m_1$. This is a subject of current research by the author.

There is one more collection of moduli spaces that shows up in the compactness Theorem~\ref{compactness}. For given $v^{\ell/(n+1)}$, consider $\Delta_1$ as the upper half-plane and take $W^{\ell/(n+1)}_1=\Sigma\times\Delta_1$, decorated with $k$ cylinders $C_i^{v^{\ell/(n+1)}}\subset\Sigma\times\R$, obtained by choosing cylinders that agree with $C^{\ell/(n+1)}$ near $-\infty$ and $C^{(\ell+1)/(n+1)}$ near $+\infty$. Choose a point $z_i$ in each component of $\Sigma\setminus\pi_\Sigma\left(\left\{C_i^{\ell/(n+1)}\right\}_{i=1}^k\right)$ and a complex structure $J$ satisfying (\hyperlink{j1}{{\bf J1}})-(\hyperlink{j5}{{\bf J5}}). Then for a generator $\y$ represented by a $k$-tuple of points in $\Gamma^\frac{\ell}{n+1}\cap\Gamma^\frac{\ell+1}{n+1}$, let $\pi_2(\y)$ be the collection of homotopy classes of maps $u\co S\to W^{\ell/(n+1)}_1$ satisfying (\hyperlink{m0}{{\bf M0}})-(\hyperlink{m5}{{\bf M5}}). Let $\cM^{\ell/(n+1)}(\y)$ denote the collection of rigid holomorphic embeddings representing elements of $\pi_2(\y)$. Since varying the superscript in this symbol corresponds to an arbitrarily small perturbation of $J$, which corresponds to a trivial cobordism of moduli spaces when $J$ achieves transversality, the superscript is only included when discussing a particular end of $W_{n+1}$, and we usually write $\cM(\y)$ to denote this moduli space.\end{subsection}
\end{section}
\begin{section}{Homotopy preliminaries}\label{homotopypreliminaries}
The section begins with a few topological constructions that are used throughout the rest of the paper. Sections \ref{sectors} and \ref{secttraj} are mainly meant as an introduction to the algebraic topology of surface diagrams and are not required for the proof of Theorem~\ref{thm}: Section~\ref{sectors} explains how each generator falls into a partition relatively indexed by $H_1(M)$ and further into a partition indexed by $H_1(X)$ called a \emph{sector}, and Section~\ref{secttraj} is a discussion of how the curves counted by $\fm$ relate to $H_2(M)$. Section~\ref{sdlemmas} has some crucial lemmas about surface diagrams that explain why they are manageable enough for the proposed 4-manifold invariants to be feasible. We go into some detail in this section because of the relative lack of surface diagram experts (compared to those who would more readily understand the arguments and constructions in the Floer theoretic parts of the paper).
\begin{subsection}{A certain submanifold}\label{handledecomp} \begin{figure}\capstart\begin{center}\includegraphics{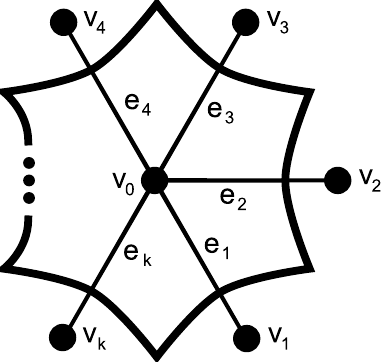}\end{center}\caption{A graph $G_k\subset S^2$, with the critical image of the fibration map in bold.\label{graph}}\end{figure}  $G_k$ be the graph in $S^2$ as in Figure~\ref{graph}, with one edge for every element of  $\Gamma$. Choose a regular neighborhood $\nu G_k$ so that its intersection with the critical image $f(\crit f)$ is a collection of $k$ arcs that contract to $G_k\cap\crit f$. Orienting the edges of $G_k$ to start at $v_0$, and labeling $G_k$ as in the figure, the preimage of each edge $e_n$ is a cobordism from $\Sigma_g$ to $\Sigma_{g-1}$ obtained by attaching a 3-dimensional 2-handle to the product of $\Sigma_g=f^{-1}(v_0)$ with $[0,1]$ along $\gamma_n\times\{1\}$. The preimage $X:=f^{-1}(\nu G_k)$ can then be constructed as a 4-dimensional handlebody by adding fiber-framed 2-handles to the boundary of $\Sigma_g\times D^2$ along regular neighborhoods of $\gamma_n\times e^{2\pi in/k}$, $n=1,\ldots,k$ (see, for example, \cite[Section~3.2]{Be}). The handles comprising the closure of $M\setminus X$ consist of a single 2-handle attached to a circle in $\partial X$ (determined up to isotopy in $\partial X$ by $\Gamma$), $2g-2$ 3-handles and a single 4-handle. Recall that the placement of the 3-handles and 4-handles in any smooth, closed 4-manifold is unique up to isotopy \cite[Section~4.4]{GS}. The placement of the 2-handle in $M\setminus X$ is determined up to isotopy in $\partial(M\setminus X)\cong\Sigma_{g-1}\times S^1$ by $\Gamma$ when $g>2$; see for example \cite[Corollary~2]{W1}.\end{subsection}
\begin{subsection}{Generators, sectors and first homology classes}\label{sectors}The following is an explanation of how generators are partitioned according to $H_1(M)$, and of how each partition is further partitioned into sectors.
\begin{subsubsection}{Generators and sectors} For two generators $\x,\y$, let $a_i$ be any 1-chain in $\alpha_i$ going from the entry of $\x$ to the entry of $\y$ in $\alpha_i$, and define $b_i$ similarly as a 1-chain in $\beta_i$. Then $c_{\x\y}=\sum_i(a_i-b_i)$ is a cycle in $\Sigma$, and define $\varepsilon(\x,\y)$ to be the image of this cycle under the map \[H_1(\Sigma)\to\frac{H_1(\Sigma)}{\iota_*\left(\Z^{|\ba|}+\Z^{|\bb|}\right)}\cong H_1(X),\]where $\iota_*$ is the inclusion homomorphism that sends $n\cdot\alpha_i+m\cdot\beta_j$ to the class $n[\alpha_i]+ m[\beta_j]$.\begin{prop}\label{sectorprop1}The set $\pi_2(\x,\y)$ is nonempty if and only if $\varepsilon(\x,\y)=0$.\end{prop}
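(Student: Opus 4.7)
My plan is to mirror the standard proof of the analogous Heegaard-Floer statement (cf.\ \cite{OS}), splitting into two directions: the forward implication is a homological computation, while the reverse requires an explicit topological construction realizing a prescribed domain.

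For the forward direction, I would start with $\varphi\in\pi_2(\x,\y)$ and let $D_\varphi=\pi_\Sigma\circ u$ be the associated domain, viewed as a 2-chain in $\Sigma$. The boundary condition (\hyperlink{m1}{{\bf M1}}), the one-arc condition (\hyperlink{m3}{{\bf M3}}), and the asymptotic conditions (\hyperlink{m4}{{\bf M4}}) together with the exponential convergence cited after (\hyperlink{m6}{{\bf M6}}) force $\partial D_\varphi$ to decompose as a sum of 1-chains on $\ba$ (running from entries of $\x$ to the correspondingly relabeled entries of $\y$) and on $\bb$ (running in the opposite direction). Comparing with the fixed chains $a_i, b_i$ used to define $c_{\x\y}$, the difference is a collection of cycles in $\bigcup(\alpha_i\cup\beta_i)$, hence lies in $\iota_*(\Z^{|\ba|}+\Z^{|\bb|})$. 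Since $\partial D_\varphi$ is a boundary in $\Sigma$, the class $c_{\x\y}$ must vanish in the quotient, proving $\varepsilon(\x,\y)=0$.

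For the reverse direction, suppose $\varepsilon(\x,\y)=0$. Then $c_{\x\y}$ is homologous in $\Sigma$ to a sum $\sum n_i\alpha_i+\sum m_j\beta_j$, and I can produce a 2-chain $D$ in $\Sigma$ witnessing this homology. By construction, $\partial D$ agrees with $c_{\x\y}$ modulo full $\alpha$- and $\beta$-cycles, so $D$ is a valid domain in the sense of Definition~\ref{vocab}. To lift $D$ to a continuous map $u\co S\to W_2$ representing an element of $\pi_2(\x,\y)$, I would use the standard branched-cover construction (cf.\ \cite{L}): for each region $R$ of $\Sigma\setminus(\ba\cup\bb)$ with multiplicity $n_R$ in $D$, take $|n_R|$ copies of $\overline{R}$, glue them along their $\alpha$- and $\beta$-edges according to the local combinatorics dictated by $\partial D$ at each corner, and map the resulting compact surface $S$ into $\Sigma$ by the obvious projection. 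Post-composing with a proper map $S\to\Delta_2\cong[0,1]\times\R$ that carries $\alpha$-arcs to $\{1\}\times\R$ and $\beta$-arcs to $\{0\}\times\R$, with the correct asymptotics at $\pm\infty$, yields the desired $u\co S\to W_2$.

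The main obstacle I anticipate is the corner-matching combinatorics in the reverse construction: at each intersection $\alpha_i\cap\beta_j$ appearing in $\partial D$, the glued copies of adjacent regions must meet in a consistent cyclic pattern so that $S$ is a surface with boundary and the boundary components correctly match up near $\x$ and $\y$. The identity $\partial D\equiv c_{\x\y}$ modulo $\iota_*$ is precisely what forces this local consistency at the corners lying in $\x\cup\y$, while at every other intersection the multiplicities of the four adjacent regions must alternate in a way guaranteed by $\partial^2 D=0$. Allowing $D$ to have negative coefficients presents no serious issue, since we need only a homotopy class rather than a $J$-holomorphic representative; the corresponding copies of $\overline{R}$ are simply taken with reversed orientation.
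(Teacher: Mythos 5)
The forward direction is correct and matches the standard argument. For the reverse direction, your branched-cover construction is a more explicit alternative to the paper's cited proof (Lipshitz's Lemma~2.2, which ultimately reduces to Ozsv\'ath--Szab\'o's homotopy-theoretic argument in $\Sym^g(\Sigma)$ using $\pi_1$ and $\pi_2$ of the symmetric product, transferred to the cylindrical setting via the tautological correspondence). Your route is legitimate in spirit, but it has a concrete gap: the treatment of negative coefficients by ``reversing orientation'' on the corresponding sheets does not produce a map satisfying (\hyperlink{m4}{\textbf{M4}}). If some components of $S$ project to $\Delta_2$ orientation-reversingly, the proper-map asymptotics at the punctures are wrong (those sheets run toward the wrong cylindrical end), and the gluing between a positively-oriented sheet and a negatively-oriented sheet along a shared $\alpha$- or $\beta$-edge generally fails to give a surface with boundary mapping into the prescribed cylinders. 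The standard and correct fix is to first add $m[\Sigma]$ for $m\gg0$ so that the domain becomes positive --- this is permitted here since there is no basepoint, so $[\Sigma]$ is a periodic domain and $\pi_2(\x,\y)$ is closed under this addition --- and then carry out the branched-cover construction.

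Beyond that, the corner-matching combinatorics in the positive case are genuinely the crux, and your appeal to ``$\partial^2 D = 0$'' is too vague to serve as a proof: one needs the alternating-multiplicity condition at each intersection of $\ba\cup\bb$ away from $\x,\y$, and the $\pm1$ jump at the corners in $\x\cup\y$, and one must then actually verify that a cut-and-paste along $\alpha$- and $\beta$-arcs yields a compact surface with punctured boundary satisfying (\hyperlink{m0}{\textbf{M0}})--(\hyperlink{m5}{\textbf{M5}}), including the single-arc condition (\hyperlink{m3}{\textbf{M3}}). These are exactly the steps that the homotopy-theoretic argument in $\Sym^k(\Sigma)$ circumvents. (One should note, though, that the paper's citation is itself a bit cavalier: in this setting the tori in $\Sym^k(\Sigma)$ are only immersed, which deserves a comment that neither the paper nor your proposal gives.)
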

\begin{proof}The argument is the same as the proof of equivalence of (1) and (2) in \cite[Lemma~2.2]{L}.\end{proof}
Proposition~\ref{sectorprop1} justifies the presence of a partition of the generators, relatively indexed by elements of $H_1(X)$.\begin{df}Each element of the partition of the generators according to $H_1(X)$ is called a \emph{sector}, and the notation $\x\in\sigma\in a\in H_1(X)$ means $\x$ is an element of the partition given by $\sigma$, and $\sigma$ is a representative of $a$.\end{df}There is an obvious generalization of $\varepsilon$ and Proposition~\ref{sectorprop1} to $\pi_2(\arx,\y)$, partitioning the generators of the vector space $\cP^{\otimes(n+1)}$. We use a tensor here instead of direct product because $\pi_2(\arx,\y)$ is empty if any of the entries of $\arx$ are the empty generator.\end{subsubsection}

\begin{subsubsection}{Sectors and $H_1(M)$}\label{secth1m}The purpose of this section is to delineate how the sectors of $\cP$ relate to each other and how the homotopy classes of curves counted by $\fm$ relate to $H_2(M)$. It turns out that sectors naturally represent elements of $H_1(M)$. Proposition~\ref{sectorprop3} is the main ingredient in the proof of this fact, by showing that every surface diagram has generators that represent $0\in H_1(M,\Z)$ in a sense that is described as follows.

In any surface diagram with $k$ attaching circles coming from a map $f\co M\to S^2$, the algebraic intersection number $\#(\gamma_i\cap\gamma_{i+1})$ is $\pm 1$, so it is always possible to choose a point $p_i\in\gamma_i\cap\gamma_{i+1}$ for each $i\in\Z/k\Z$. Let $Z_i$ be an arc in $\gamma_i$ such that $\partial Z_i=p_i-p_{i-1}$, and let $Z=\sum_iZ_i$.
\begin{prop}\label{sectorprop2}The circle $Z\subset\Sigma\subset M$ is freely isotopic to the critical circle of the fibration map. In particular, its free isotopy class is independent of the choice of point $p_i\in\gamma_i\cap\gamma_{i+1}$ and arc $Z_i$ for each $i\in\Z/k\Z$.\end{prop}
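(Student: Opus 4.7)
The plan is to realize the critical circle $C\subset M$ as the parallel-transport limit of curves obtained by pushing the critical locus off into nearby regular fibers, and then to identify this limit with $Z$ after transporting back to the reference fiber $\Sigma=f^{-1}(v_0)$. I would split the argument into a local analysis along fold arcs, a local analysis at cusps, a global assembly, and finally independence of choices.

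First I would work in the standard indefinite fold normal form along each arc of the fold locus lying over an edge $e_i$ of $G_k$. In this model the critical set in $M$ is a smooth arc $C_i$, and $\gamma_i$ is exactly the boundary of the Lefschetz-type thimble obtained by pushing $C_i$ into a neighboring regular fiber; in particular, the image of $C_i$ in that fiber under the horizontal distribution is an arc inside $\gamma_i$. Then I would turn to the indefinite cusp normal form at each vertex $v_i$ of the critical image, where two fold arcs meet. A direct inspection of this normal form shows that the single cusp point of $M$ lying over $v_i$, when transported along a small base loop surrounding $v_i$ into $\Sigma$, lands at a distinguished transverse intersection point of $\gamma_i$ and $\gamma_{i+1}$ realizing the algebraic intersection number $\pm 1$. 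Calling this point $p_i$, the arcs pushed off from $C_i$ and $C_{i+1}$ both limit to $p_i$ from opposite sides in $\Sigma$.

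Globally, I would push the entire critical circle $C$ uniformly to one side of its image in $S^2$, obtaining a circle in a smooth family of regular fibers, and use parallel transport along a contraction of $\nu G_k$ onto $v_0$ to move this circle into $\Sigma$. By the two local computations above the resulting curve is precisely a concatenation $\sum_i Z_i'$, where each $Z_i'$ is an arc in $\gamma_i$ from $p_{i-1}$ to $p_i$. This is one particular instance of the construction in the statement and is freely isotopic to $C$ in $M$ by construction.

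Finally, for independence of choice: any two admissible data $(p_i,Z_i)$ and $(p_i',Z_i')$ differ by a finite sequence of elementary moves, each of which either replaces an arc in $\gamma_i$ between the same endpoints by another such arc, or slides an endpoint within $\gamma_i\cap\gamma_{i+1}$. In $\Sigma$ each such move changes the cycle $Z$ by adding a full copy of some $\gamma_i$; but Section~\ref{handledecomp} presents $X\subset M$ as a handlebody in which each $\gamma_i$ bounds the core disk of a fiber-framed $2$-handle, so every $\gamma_i$ is null-isotopic in $M$. Consequently the free isotopy class of $Z$ in $M$ is independent of the choices, and by the previous paragraph it coincides with the free isotopy class of the critical circle. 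The main obstacle is the cusp step: one must check, from the indefinite cusp normal form, that the cusp point in $M$ truly corresponds to a preferred geometric intersection of the two adjacent vanishing cycles under parallel transport, and that the pushed-off fold arcs on either side limit to that same point — this is the computation that pins $p_i$ down up to the ambiguity absorbed in the last step.
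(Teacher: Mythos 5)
Your first three paragraphs follow essentially the same outline as the paper: analyze the indefinite fold model to build a push-off of each critical arc into nearby regular fibers, analyze the cusp model to see that consecutive push-offs meet at an intersection point of $\gamma_i$ and $\gamma_{i+1}$, and assemble these into a free isotopy from the critical circle to a concatenation $\sum_i Z_i$. The paper phrases this by producing the disks $D_i(t)$ and the three-dimensional solid $D=\bigcup_{t,i}D_i(t)$ that retracts onto $\crit f$ with $Z\subset\partial D$, but the geometric content is the same.

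The gap is in your last paragraph, on independence of choices. You correctly isolate two kinds of elementary moves but then assert that each ``changes the cycle $Z$ by adding a full copy of some $\gamma_i$.'' That is true for the first kind of move (replacing $Z_i$ by another arc in $\gamma_i$ with the same endpoints), but it is false for the second kind. Sliding an endpoint from $p_i$ to $p_i'\in\gamma_i\cap\gamma_{i+1}$ changes $Z$ by a cycle of the form $a-b$, where $a\subset\gamma_i$ and $b\subset\gamma_{i+1}$ are arcs from $p_i$ to $p_i'$; this cycle is supported on \emph{two} of the $\gamma$'s and is not a multiple of any single $\gamma_j$, so the argument that ``$\gamma_i$ bounds a thimble, hence is null-isotopic'' does not apply to it. Establishing that $a-b$ is null-isotopic in $M$ is precisely as hard as the independence statement you are trying to prove, and it is not settled by your homotopy argument.

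The paper sidesteps this entirely: its construction of the disks $D_i(t)$, with $Z_i(t)$ chosen on $\partial D_i(t)\cong\gamma_i$ joined to the center $\crit(f)_i(t)$ by a radius, works for \emph{any} choice of $p_i\in\gamma_i\cap\gamma_{i+1}$ and arc $Z_i$ (the radius may be prescribed to reach whatever boundary point is desired at $t=0,1$, and to sweep out whatever arc of $\gamma_i$ is desired in between). Since the resulting solid $D$ always retracts to $\crit f$, the free isotopy of $Z$ with the critical circle holds uniformly in the choices, and independence is a corollary rather than a separate step. To repair your argument you would either need to directly justify null-isotopy of the cycles $a-b$ (which amounts to re-running the cusp-model analysis near the relevant corner), or, more economically, arrange as the paper does that the push-off in your third paragraph realizes an \emph{arbitrary} choice of $(p_i,Z_i)$ rather than only a distinguished one.
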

\begin{proof}Let $\crit(f)_i$ denote the closure of the fold arc whose vanishing cycle is $\gamma_i$. Parameterize $Z_i$ and $\crit(f)_i$ by $[0,1]$. For each $t\in(0,1)$, the local model for a fold gives a disk $D_i(t)\subset M$ with boundary $\gamma_i$, such that $Z_i(t)$ is connected to $\crit(f)_i(t)$ by a radius. For $t=0$ and $t=1$, the disks $D_i(t)$ exist by continuity of $f$, and $D_i(0)\cap D_{i-1}(1)$ is a radius of each disk connecting a cusp point to $p_i$. Now the proposition follows from the fact that $Z\subset\partial D$ and \[D=\bigcup\limits_{t\in[0,1],\ i\in\Z/k\Z}D_i(t)\] contracts to $\crit f$.\end{proof}
\begin{prop}\label{sectorprop3}The circle $Z$ represents $0\in H_1(M,\Z)$. \end{prop}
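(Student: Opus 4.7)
The plan is to combine Proposition~\ref{sectorprop2} with the handlebody decomposition of Section~\ref{handledecomp}. By Proposition~\ref{sectorprop2}, $Z$ is freely isotopic to $\crit f$ in $M$, so $[Z] = \pm[\crit f]$ in $H_1(M;\Z)$, and it suffices to show $[\crit f] = 0$.

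First I would note that $H_1(M;\Z) = H_1(X;\Z)/\langle[c]\rangle$, where $c\subset\partial X\cong\Sigma_{g-1}\times S^1$ is the attaching circle of the extra 2-handle of $M\setminus X$: neither the $2g-2$ three-handles nor the four-handle affects $H_1$, and every $[\gamma_i]$ is already zero in $H_1(X;\Z)$ since each $\gamma_i$ bounds the core of its 2-handle inside $X$. It therefore suffices to identify the class of $\crit f$ in $H_1(X;\Z)$ with $\pm[c]$.

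For this identification, the plan is to use the 3-dimensional region $D = \bigcup_{i,t}D_i(t)$ constructed in the proof of Proposition~\ref{sectorprop2}. This region gives a cobordism in $X$ from $Z\subset\Sigma_g$ to $\crit f$ through vanishing disks that are compatibly glued along the shared radii at the cusps. Extending this cobordism across $\partial X$ via the fibration $f$ and pushing the image of $\crit f$ into $\Sigma_{g-1}\times S^1$ produces a specific curve in $\partial X$; by the uniqueness statement for the 2-handle attaching circle cited from \cite[Corollary~2]{W1} for $g>2$, this pushed curve must be isotopic to $c$ in $\partial X$.

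The main obstacle I anticipate is this last identification: verifying that the curve obtained by pushing $\crit f$ to $\partial X$ is indeed isotopic to $c$, rather than to some other class in $H_1(\partial X;\Z)$. This requires a careful description of the fibration monodromy around $\partial\nu G_k$, and uses that $f$ is homotopic to a constant to pin down the isotopy class of $c$. Once this identification is established, $[\crit f] = [c] = 0$ in $H_1(M;\Z)$, and hence $[Z] = 0$ as required.
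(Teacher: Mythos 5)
Your proposal takes a genuinely different route from the paper's proof, which uses none of the handlebody data of Section~\ref{handledecomp}: the paper constructs an almost complex structure $J_0$ on $M\setminus\nu Z$ coming from the fibration, observes that $\langle c_1(J_0),[F_a]\rangle=\chi(F_a)$ jumps by $2$ across $f(Z)$ while $c_1$ of any Spin$^c$ structure on all of $M$ must be constant on fibers, and then invokes Taubes' work to conclude that the difference class is Poincar\'e dual to a relative class $A\in H_2(M,Z)$ with $\partial A=[Z]$, forcing $[Z]=0$ in $H_1(M)$.

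Your general framework---$H_1(M)\cong H_1(X)/\langle[c]\rangle$ and it suffices to show $[Z]\in\langle[c]\rangle$---is the one the paper itself alludes to later in Section~\ref{secth1m}, so it is a reasonable target. However, the argument as proposed has a genuine gap at the step you yourself flag. Two specific problems: (i) The region $D=\bigcup_{i,t}D_i(t)$ is \emph{not} contained in $X$. The disks $D_i(t)$ are vanishing thimbles over paths from $v_0$ out to $f(\crit(f)_i(t))$, and the fold circle $f(\crit f)$ meets $\nu G_k$ only in $k$ short arcs, so most of $\crit f$---and hence most of $D$---lies in $Y=\overline{M\setminus X}$. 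Thus $D$ cannot be used as a cobordism inside $X$ relating $[Z]$ to a class in $\partial X$. (ii) Even granting that one can push $\crit f$ to $\partial X\cong\Sigma_{g-1}\times S^1$, the citation of \cite[Corollary~2]{W1} does not do the work you want: that result says the isotopy class of the $2$-handle attaching circle $c$ is determined by $\Gamma$ (for $g>2$), i.e.\ it is a uniqueness statement for $c$; it does not assert that some independently constructed curve is isotopic to $c$. Identifying $[\crit f]$ with $\pm[c]$ in $H_1(\partial X)$ would require an actual analysis of how $\crit f$ sits in $Y\cong\Sigma'\times D^2$ under the trivialization, which is not supplied. So the plan, while not obviously unsalvageable, rests entirely on an unproved identification, and the paper sidesteps exactly this difficulty by working with $c_1$ of Spin$^c$ structures instead.
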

\begin{proof}The following proof is due to David Gay. Let $f\co M\to S^2$ be a simplified purely wrinkled fibration, and let $\nu Z$ be a tubular neighborhood of its critical circle $Z$. Fix an arbitrary choice of almost complex structure $j_\mathcal{H}$ on some horizontal distribution $\mathcal{H}\subset T(M\setminus\nu Z)$, and  choose a smoothly varying complex structure $j_a$ on the fibers $F_a$ of the map $f|_{M\setminus\nu Z}$, so that $J_0=j_\mathcal{H}\oplus j_a$ is an almost complex structure on $M\setminus Z$ with induced Spin$^c$ structure $\mathfrak{s}_0$. The first Chern class of $J_0$ evaluated on $[F_a]\in H_2(M\setminus Z)$ is $\chi(F_a)$, so that the values of this function taken at fibers on the higher and lower genus sides of $f(Z)$ differ by 2. Now choose an arbitrary Spin$^c$ structure $\mathfrak{s}$ on $M$. Its first Chern class gives a constant function of the fibers of $f$, so that $\mathfrak{s}|_{M\setminus\nu Z}$ is distinct from $\mathfrak{s}_0$, and differs from $\mathfrak{s}_0$ by the action of some nonzero $\alpha\in H^2(M\setminus\nu Z)$, Poincare dual to a nonzero $A\in H_2(M,Z)$. Now, work of Taubes \cite{T} shows that such classes are exactly the ones whose image under $\partial\co H_2(M,Z)\to H_1(Z)$ is $[Z]$ for an appropriate orientation of $Z$ (see also \cite[Section~4.2]{P}). Thus, the boundary of any representative of $A$ is homologous to $Z$, completing the proof. A way to see this more directly is to observe that\[\langle c_1(\mathfrak{s}_0)-c_1(\mathfrak{s}|_{M\setminus \nu Z}),[F_a]\rangle=2A\cdot[F_a]=\chi(F_a)-const.\] decreases by 2 when $a$ crosses from the higher-genus side to the lower-genus side of $f(Z)$, so that any representative of $A$ must have boundary homologous to $Z$.\end{proof}
\begin{figure}[h]\capstart\begin{center}\includegraphics{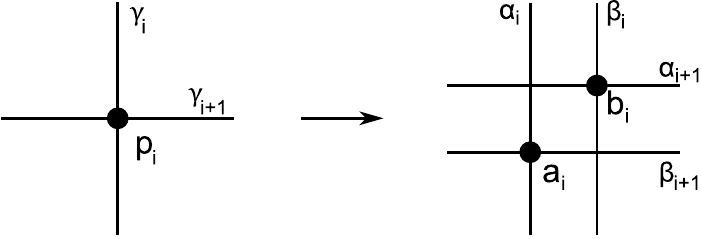}\end{center}\caption{Two generators result from a choice of $k$ points $p_i\in\gamma_i\cap\gamma_{i+1}$.\label{prime}}\end{figure}In contrast to spin$^c$ structures on closed 3-manifolds, there is a canonical sector for each surface diagram. Assuming there are no canceling intersections between $\gamma_i$ and $\gamma_{i+1}$, $i\in\Z/k\Z$, there are exactly two generators with entries chosen from $\{a_i,b_i:i=1,\ldots,k\}$ as in Figure~\ref{prime}, because choosing one entry from this set uniquely determines the other $k-1$ entries according to the requirement that each circle contains exactly one entry. Let $\x_0$ denote a choice, once and for all, of one of these generators.
\begin{df}Any generator resulting from a construction like the one producing $\x_0$ is called a \emph{reference generator}. For any generator $\x$, the element $\varepsilon(\x,\x_0)\in H_1(X)$ is called the \emph{sector} of $\x$ with respect to $\x_0$, denoted $\sigma_{\x_0}(\x)$. When there is no ambiguity about $\x_0$, the subscript is omitted.\end{df}
Recall the construction of the 1-cycle $c_{\x\y}\subset\Sigma\subset M$ at the very beginning of Section~\ref{sectors}.
\begin{prop}\label{sectorprop4}Let $\x_0$, $\x_0'$ be reference generators for some fixed $(\Sigma,\ba,\bb)$. For any generator $\x$, $[c_{\x\x_0}]=[c_{\x\x_0'}]\in H_1(M)$.\end{prop}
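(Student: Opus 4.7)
The plan is to reduce the statement, via additivity of $c_{\x\y}$ together with the vanishing of $[\alpha_i]$ and $[\beta_i]$ in $H_1(M;\Z_2)$, to showing $[c_{\x_0'\x_0}]=0$ whenever both $\x_0$ and $\x_0'$ are reference generators; and then to identify $c_{\x_0'\x_0}$ with a difference of two critical-circle representatives, each null-homologous in $M$ by Proposition~\ref{sectorprop3}.

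First I would observe that each perturbed vanishing cycle $\alpha_i$ and $\beta_i$ is isotopic to $\gamma_i$, which bounds a vanishing disk emanating from the fold arc $\crit(f)_i$ described in Section~\ref{handledecomp}. Hence $[\alpha_i]=[\beta_i]=0$ in $H_1(M;\Z_2)$. Since different admissible choices of the arcs $a_i\subset\alpha_i$ and $b_i\subset\beta_i$ in the definition of $c_{\x\y}$ alter this $1$-cycle by $\Z_2$-combinations of the $[\alpha_i]$ and $[\beta_i]$, the class $[c_{\x\y}]\in H_1(M;\Z_2)$ is well defined. Concatenating arcs from $\x$ to $\x_0'$ with arcs from $\x_0'$ to $\x_0$ to obtain arcs from $\x$ to $\x_0$ (this concatenation agrees with any direct choice of an $\x$-to-$\x_0$ arc modulo a full loop around the circle in question) yields the cocycle identity
\begin{equation*}
[c_{\x\x_0}]\;=\;[c_{\x\x_0'}]\;+\;[c_{\x_0'\x_0}]\quad\text{in } H_1(M;\Z_2),
\end{equation*}
so the proposition reduces to proving $[c_{\x_0'\x_0}]=0$.

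To handle this, recall that a reference generator $\x_0$ is determined by a choice of $p_i\in\gamma_i\cap\gamma_{i+1}$ for each $i\in\Z/k\Z$ together with a local choice between the two perturbation resolutions $a_i,b_i$ near each $p_i$. I would isotope the $\alpha_i$-arcs and $\beta_i$-arcs comprising $c_{\x_0'\x_0}$ onto the unperturbed circles $\gamma_i$. Up to bigons bounding small disks in $\Sigma$ (produced by the local $a$-vs.-$b$ discrepancy) and full loops around the $\gamma_i$ (null-homologous in $M$ by the previous paragraph), the resulting cycle supported on $\bigcup_i\gamma_i$ agrees with $Z(\x_0)-Z(\x_0')$, where $Z(\x_0)$ denotes the critical-circle construction of Proposition~\ref{sectorprop2} applied to the $p_i$'s underlying $\x_0$. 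By that proposition, $Z(\x_0)$ and $Z(\x_0')$ are freely isotopic in $M$, and by Proposition~\ref{sectorprop3} each is null-homologous. Therefore $[c_{\x_0'\x_0}]=0$, completing the proof.

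The main obstacle is the bookkeeping in step three: one must track which entry of each reference generator lies on each $\alpha_i$ and each $\beta_i$, and align the resulting arcs with the $Z_i\subset\gamma_i$ of Proposition~\ref{sectorprop2}. Since all entries of a reference generator are concentrated in small neighborhoods of the chosen intersection points $p_i$, this matching is ultimately local and poses no essential difficulty, but it is where care is required.
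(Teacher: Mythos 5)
Your proof is correct, and the high-level ingredients are the same as the paper's (Propositions~\ref{sectorprop2} and~\ref{sectorprop3} do the work in both), but the decomposition is genuinely different. The paper writes $c_{\x\x_0'}-c_{\x\x_0}$ directly as an $\alpha$-part minus a $\beta$-part, argues that each part, after pushing to the $\gamma_i$'s and completing near the $p_i$'s, represents $\pm[\crit f]$, and concludes $[c_{\x\x_0'}-c_{\x\x_0}]=\pm 2[\crit f]=0$ by Proposition~\ref{sectorprop3}. You instead reduce via the cocycle identity to $[c_{\x_0'\x_0}]$ and then match $c_{\x_0'\x_0}$ (pushed to $\bigcup_i\gamma_i$, modulo small bigons and full loops $[\gamma_i]$ that bound thimbles) with $\pm\bigl(Z(\x_0)-Z(\x_0')\bigr)$; this accounting works because for each $i$ the chain $\bar a_i-\bar b_i+(Z_i(\x_0)-Z_i(\x_0'))$ is a cycle supported in $\gamma_i$. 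The nice feature of your route is that Proposition~\ref{sectorprop2} alone already finishes: the free isotopy of $Z(\x_0)$ to $Z(\x_0')$ gives $[Z(\x_0)]=[Z(\x_0')]\in H_1(M;\Z)$, so invoking Proposition~\ref{sectorprop3} is a harmless redundancy, whereas the paper's $\pm 2[\crit f]$ genuinely needs Proposition~\ref{sectorprop3} (or mod-2 coefficients) to vanish. One small blemish: you phrase the reduction over $\Z_2$, but the statement and the paper's argument are over $\Z$. This costs you nothing since the $[\gamma_i]$ bound thimbles integrally and the cocycle identity holds at the chain level, so the same argument establishes the $\Z$-coefficient claim; just drop the $\Z_2$'s.
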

\begin{proof}Define $c_{\alpha_i}$ to be any 1-chain in $\alpha_i$ from the entry of $\x$ in $\alpha_i$ to the entry of $\x_0$ in $\alpha_i$, and let $c_{\alpha_i}'$ to be any 1-chain in $\alpha_i$ from the entry of $\x$ in $\alpha_i$ to the entry of $\x_0'$ in $\alpha_i$, and similarly define arcs $c_{\beta_i},c_{\beta_i}'$. Then on the chain level \[\begin{array}{ccc}c_{\x\x_0'}-c_{\x\x_0}&=&\sum_i(c_{\alpha_i}'-c_{\beta_i}')-\sum_i(c_{\alpha_i}-c_{\beta_i})\\&=&\sum_i(c_{\alpha_i}'-c_{\alpha_i})-\sum_i(c_{\beta_i}-c_{\beta_i}'),\end{array}\]and it is not hard to use Proposition~\ref{sectorprop2} to verify that the last two sums are oppositely oriented representatives of $\pm[\crit f]\in H_2(M)$, so that $[c_{\x\x_0'}-c_{\x\x_0}]=\pm2[\crit f]=0\in H_1(M)$ by Proposition~\ref{sectorprop3}.\end{proof}
\begin{cor}\label{topologicalpartitioncor}The assignments $\x\mapsto\sigma_{\x_0}(\x)\in H_1(X)$ and $\x\mapsto\sigma_{\x_0}(\x)\mapsto a\in H_1(M)$, where the last map is induced by the inclusion $X\hookrightarrow M$, do not depend on $\x_0$.\end{cor}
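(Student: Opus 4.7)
My plan is to treat the two assignments separately, since the first is well-defined only up to a global shift in $H_1(X)$ (so its intrinsic content is really a partition of generators), while the second is well-defined as an actual element of $H_1(M)$.

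For the first assignment, I would show that for any two reference generators $\x_0,\x_0'$, the difference $\sigma_{\x_0}(\x)-\sigma_{\x_0'}(\x)$ is a constant in $H_1(X)$ independent of $\x$. First I would verify, at the chain level in $\Sigma$, the concatenation identity
\[ c_{\x\x_0'} - c_{\x\x_0} \;=\; c_{\x_0\x_0'} \;+\; \partial(\text{a 2-chain in }\Sigma), \]
by pairing the arcs in each $\alpha_i$ going from the entry of $\x$ to the entry of $\x_0'$ with the concatenation ``from $\x$ to $\x_0$, then from $\x_0$ to $\x_0'$'', and similarly for the $\beta_i$. Passing to $H_1(X)$ yields $\sigma_{\x_0'}(\x)=\sigma_{\x_0}(\x)-\varepsilon(\x_0,\x_0')$, so every sector label is shifted by the same element. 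Hence the fibers of $\x\mapsto\sigma_{\x_0}(\x)$ (the sector partition) do not depend on the choice of $\x_0$, which is the intended content of the first claim.

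For the second assignment I would unwind the definition: the value on $\x$ is the image $[c_{\x\x_0}]\in H_1(M)$ of the cycle $c_{\x\x_0}\subset\Sigma\subset X\subset M$, and Proposition~\ref{sectorprop4} already shows $[c_{\x\x_0}]=[c_{\x\x_0'}]\in H_1(M)$ for any two reference generators. The reason the stronger absolute statement holds in $H_1(M)$ but fails in $H_1(X)$ is Proposition~\ref{sectorprop3}: the shift $\varepsilon(\x_0,\x_0')$ produced by the first step is, up to sign, a multiple of $[\crit f]$, which may be nonzero in $H_1(X)$ but dies under $H_1(X)\to H_1(M)$.

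The main obstacle is simply the chain-level bookkeeping in the first step; once the concatenation identity is written cleanly in terms of the arcs $c_{\alpha_i},c_{\beta_i}$ and their primed analogues for $\x_0'$, both parts of the corollary follow immediately, with no further 4-manifold input beyond what Propositions~\ref{sectorprop3} and \ref{sectorprop4} have already supplied.
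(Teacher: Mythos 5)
Your proof of the second assignment is fine and matches the paper: it is just Proposition~\ref{sectorprop4} together with the observation that the second assignment literally sends $\x$ to $[c_{\x\x_0}]\in H_1(M)$.

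For the first assignment, however, you have proved something strictly weaker than what the corollary asserts. You reinterpret the claim as saying only that the \emph{partition} of generators into fibers of $\x\mapsto\sigma_{\x_0}(\x)$ is independent of $\x_0$, allowing the labels to shift by the constant $\varepsilon(\x_0,\x_0')\in H_1(X)$, and you even write that this shift ``may be nonzero in $H_1(X)$.'' But the corollary states that the actual value $\sigma_{\x_0}(\x)\in H_1(X)$ is independent of $\x_0$ --- i.e., that $\varepsilon(\x_0,\x_0')=0$ in $H_1(X)$, not merely after pushing to $H_1(M)$. Your concatenation identity $\varepsilon(\x,\x_0')=\varepsilon(\x,\x_0)+\varepsilon(\x_0,\x_0')$ is the right starting point, but it reduces the problem to showing $\varepsilon(\x_0,\x_0')=0$, which you do not attempt. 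The paper closes exactly this gap by exhibiting an explicit element of $\pi_2(\x_0,\x_0')$ and then invoking Proposition~\ref{sectorprop1}: after an isotopy, one may assume consecutive elements of $\Gamma$ meet in a single point, so there are only two reference generators, and a domain connecting them is built out of the thin regions between each $\alpha_i$ and $\beta_i$; reversing the isotopy, any additional reference generators introduced by canceling pairs of intersections are connected to $\x_0$ by signed bigons. That geometric construction is the missing ingredient in your argument, and without it your proof establishes a weaker statement than the one claimed.
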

\begin{proof}The independence of the first assignment follows from the fact that $\pi_2(\x_0,\x_0')$ is nonempty for any pair of reference generators $\x_0,\x_0'$: Possibly after an isotopy, for a surface diagram $(\Sigma,\Gamma)$ assume that consecutive elements of $\Gamma$ have unique intersection. Then there are only two reference generators, and it is easy to construct an element of $\pi_2(\x_0,\x_0')$ by adding appropriate regions that lie between $\alpha_i$ and $\beta_i$ for $i=1,\ldots,k$. Reversing the isotopy of $\Gamma$ so that canceling intersections may appear between elements of $\Gamma$, any newly introduced reference generator is connected to $\x_0$ by adding appropriately signed bigons. Independence in the second assignment follows from the first and Proposition~\ref{sectorprop4}.\end{proof}
\begin{rmk}In Section~\ref{slides}, elements of $\Gamma$ are allowed to undergo a sequence of modifications in which they slide over each other much like handleslide moves in Heegaard diagrams, possibly creating more reference generators and resulting in pairs $(\Sigma,\Gamma^H)$ that are not surface diagrams, but for which the weak $A_\infty$ algebra $(\cP,\fm)$ is still defined. This presents no issue, because ``sectors" for the resulting diagrams (defined as equivalence classes of generators given by fibers of  $\varepsilon$) are calculated using the same reference generator as before the slide sequence, and after the sequence, the resulting triple $(\Sigma,\ba^s,\bb^s)$ always comes from a surface diagram, so that the argument above applies.\end{rmk}
With this understood, it makes sense to say that generators naturally fall into equivalence classes indexed by sectors, which themselves fall into equivalence classes indexed by $H_1(M)$. However, given $\x,\y\in a\in H_1(M)$ such that $\varepsilon(\x,\y)\neq0$, there does not seem to be any preferred difference in grading, because the relative Maslov index is defined as the index of any element of $\pi_2(\x,\y)$, which by Proposition~\ref{sectorprop1} is empty. For this reason, as is familiar from Heegaard-Floer theory, the relative Maslov grading (as defined) only gives a relative grading between generators in the same sector. However, considering the handlebody decomposition of Section~\ref{handledecomp}, there is one further ``section circle" $\gamma\subset\Sigma\subset M$ which is isotopic to the attaching circle of the 2-handle whose core transversely intersects each lower-genus fiber once (analogous to the 2-handle dual to the fiber 2-handle in Lefschetz fibrations); see for example \cite{B,GS} for further details. For $\x,\y\in a\in H_1(M)$, $\varepsilon$ takes values in the cyclic subgroup $G<H_1(X)$ generated by $[\gamma]$, because $H_1(M)\cong H_1(X)/G$. For this reason, the set of generators in sectors representing $a\in H_1(M)$ inherits a relative grading by integers according to multiples of $[\gamma]\in H_1(X)$.\end{subsubsection}\end{subsection}
\begin{subsection}{Relating trajectories to the 4-manifold}\label{secttraj}\label{trajectories}Here follows an explanation of the relationship between $\pi_2(\x,\y)$ and $H_2(M)$. There are versions of this material for $\pi_2(\arx,\y)$, but these are straightforward generalizations and we stick to $\pi_2(\x,\y)$ for the sake of exposition. Roughly speaking, $\pi_2(\x,\x)$ is a subgroup of the 2-chain group of $M$ corresponding to the handle decomposition given in Section~\ref{handledecomp}. When $[\Sigma]=0\in H_2(M)$ (as is the case for the surface diagrams we consider, which come from nullhomotopic maps), this subgroup is large enough to generate $H_2(M)$, so $\pi_2(\x,\x)$ could be considered a typically large extension of $H_2(M)$. Consider the following part of the long exact sequence for the pair $(M,X)$. \begin{equation}\label{eq:exseq1}\begin{array}{ccccccc}H_3(M,X)&\xrightarrow{\partial_3}&H_2(X)&\xrightarrow{i_{2\ast}}&H_2(M)&\xrightarrow{j_{2\ast}}&H_2(M,X)
\end{array}\end{equation}Though the term \emph{surface diagram} in this paper typically means a diagram coming from a nullhomotopic map, the following proposition applies to maps in any homotopy class.
\begin{prop}\label{3handles}\ \begin{enumerate}\item The map $j_{2\ast}\co H_2(M)\rightarrow H_2(M,X)$ is the zero map when $[\Sigma]=0\in H_2(M)$ so that $H_2(X)/\im\partial_3\cong H_2(M)$.
\item For a diagram $(\Sigma,\Gamma)$, when $[\Sigma]=0\in H_2(M)$ there is a commutative diagram\begin{equation}\label{eq:c3}\xymatrix{C_3(M,X)\ar[r]^{\phi}\ar[d]^{\iota_3}&C_2(X)\ar[d]^{\iota_2}\\ C_3(M)\ar[r]^{\partial} & C_2(M)}\end{equation}\end{enumerate}in which $\iota_3$ is an isomorphism induced by $\overline{M\setminus X}\hookrightarrow M$, $\iota_2$ is an inclusion $\Z^{2k}\hookrightarrow\Z^{2k+1}$ induced by $X\hookrightarrow M$, and $\phi$ is the chain map inducing $\partial_3$ in the sequence~(\ref{eq:exseq1}).\end{prop}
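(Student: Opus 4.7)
The plan is to exploit the handle decomposition of $M$ recorded in Section~\ref{handledecomp}, in which $\overline{M\setminus X}$ consists of a single 2-handle $h^2$, $2g-2$ 3-handles, and one 4-handle. For part~(1), I would pass to $H_2(M,X)\cong H_2(\overline{M\setminus X},\partial\overline{M\setminus X})$ by excision. Because $\overline{M\setminus X}$ has no 0- or 1-handles, every relative 2-chain is automatically a cycle, so $H_2(M,X)$ is a cyclic quotient of $C_2=\Z\langle h^2\rangle$ generated by $[h^2]$. The key geometric input is that the core of $h^2$ meets each lower-genus fiber $\Sigma_{g-1}$ transversely in one point, so the cellular representative of $[\Sigma_{g-1}]$ in $C_2(M,X)$ has coefficient $\pm 1$ on $h^2$; in particular $j_{2\ast}[\Sigma_{g-1}]=\pm[h^2]$. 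Any two fibers of $f$ are homologous in $M$, since the $f$-preimage of a smooth arc in $S^2$ joining regular values provides a 3-chain cobounding them. Thus $[\Sigma_{g-1}]=[\Sigma]$, which vanishes by hypothesis; so $[h^2]=0$ in $H_2(M,X)$, forcing $H_2(M,X)=0$ and $j_{2\ast}=0$. Exactness of~(\ref{eq:exseq1}) then yields the isomorphism $H_2(X)/\im\partial_3\cong H_2(M)$.

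For part~(2), both vertical maps arise from inclusions of cellular chain complexes associated to the handle decomposition. Since $X$ has no 3-handles, the excision identification makes $\iota_3\co C_3(M,X)\to C_3(M)$ an isomorphism on generators; and since the 2-handles of $X$ are precisely those of $M$ apart from $h^2$, $\iota_2$ is the expected strict inclusion. I would define $\phi$ on relative 3-cycles by the formula $\phi(c):=\iota_2^{-1}\partial\iota_3(c)$; the relative-cycle condition forces the $h^2$-component of $\partial\iota_3(c)$ to vanish, so the formula lands in $\iota_2(C_2(X))$ and is well-defined. Commutativity of the square on the relative-cycle subcomplex is immediate from the definition, and the induced map on $H_\ast$ is the connecting homomorphism $\partial_3$ of the long exact sequence by the standard zig-zag construction.

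The main obstacle I anticipate is justifying the cellular intersection claim in part~(1), namely that a lower-genus fiber represents $\pm[h^2]$ in $C_2(M,X)$. This is the defining geometric property of the distinguished 2-handle as the one whose core transversely meets each lower-genus fiber once, and it follows from the handle-theoretic arguments recalled in Section~\ref{handledecomp} together with \cite[Corollary~2]{W1}.
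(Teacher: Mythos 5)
Your part~(1) has a genuine error. You claim that because the core of $h^2$ meets a lower-genus fiber $\Sigma_{g-1}$ transversely once, a cellular cycle representing $[\Sigma_{g-1}]$ must have coefficient $\pm1$ on $h^2$, and you then conclude $[h^2]=0$ in $H_2(M,X)$ so that $H_2(M,X)=0$. But this conclusion is false: by excision and the identification $\overline{M\setminus X}\cong\Sigma_{g-1}\times D^2$ (which the paper records), $H_2(M,X)\cong H_2(\Sigma_{g-1}\times D^2,\partial)\cong\Z$, with $[\{pt\}\times D^2]=[h^2]$ a generator. The confusion is between core and cocore. The $h^2$-coefficient of a cellular cycle $\tau\in C_2(M)$ is dual to intersection with the cocore of $h^2$, which is a disk inside the spine $\Sigma_{g-1}\subset\Sigma_{g-1}\times D^2$; so that coefficient for a representative of $[\Sigma_{g-1}]$ computes the self-intersection $[\Sigma_{g-1}]\cdot[\Sigma_{g-1}]=0$ (trivial normal bundle), not $\pm1$. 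What the geometric fact $(\text{core of }h^2)\cdot\Sigma_{g-1}=\pm1$ actually gives you is that for \emph{any} cellular cycle $\sigma=\sum a_i\gamma_i+a\gamma\in C_2(M)$, one has $\sigma\cdot\Sigma_{g-1}=\pm a$; then $[\Sigma_{g-1}]=[\Sigma]=0$ forces $a=0$ for every cycle, i.e.\ every class in $H_2(M)$ dies in $H_2(M,X)\cong\Z$. This is the paper's argument: it proves $j_{2\ast}=0$ without (and indeed cannot, given $H_2(M,X)\cong\Z\neq0$) proving $H_2(M,X)=0$. So the implication you want runs exactly backwards from what the geometry gives.

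Your part~(2) is roughly in the spirit of the paper (both work at the level of cellular chain groups with the obvious inclusions as vertical maps), but the justification that $\phi=\iota_2^{-1}\partial\iota_3$ is well-defined is incomplete. You restrict to relative 3-cycles and invoke ``the relative-cycle condition,'' but the square must commute on all of $C_3(M,X)$, and the fact that the formula lands in $\iota_2(C_2(X))$ for arbitrary $c\in C_3(M,X)$ is equivalent to the relative boundary $\partial_3^{M,X}$ being the zero map. This does hold here — it follows from the same excision computation $H_2(M,X)\cong\Z\cong C_2(M,X)$ — but you should say so rather than quietly restricting the domain of $\phi$.
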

\begin{proof}For the first claim, observe that the closure $Y$ of $M\setminus X$ is diffeomorphic to $\Sigma'\times D^2$, where $\Sigma'$ is any lower-genus regular fiber. This can be seen by examining the restriction of the fibration map $f|_Y$, which appears as in 
\begin{figure}[h]\capstart\begin{center}\includegraphics{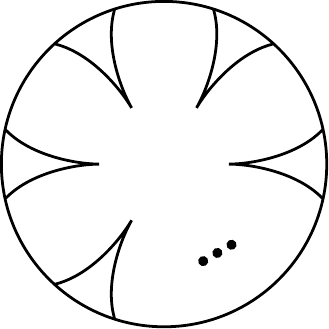}\end{center}\caption{The image of $Y$ under the fibration map. The regular fiber over the central region is $\Sigma'$, and otherwise the regular fiber is $\Sigma$.\label{lgs}}\end{figure}
Figure~\ref{lgs}. In that figure, let $S_r$ be the circle of radius $r\in(0,1]$ parallel to the boundary of the disk, which has radius 1. Then $\{f_r=f|_{f^{-1}(S_r)}:r\in(0,1]\}$ is a family of Morse functions $\Sigma'\times S^1\to S_r$ that is the obvious projection for $r$ small, and according to the local model for cusps, $f_r$ gains a canceling pair of index 1 and 2 critical points each time $S_r$ passes a cusp point. By excision, \[H_2(M,X)\cong H_2\left(\Sigma'\times D^2,\partial(\Sigma'\times D^2)\right)\cong\Z[\{pt\}\times D^2].\]In the cellular 2-chain group coming from the handle decomposition of $M$ in Section~\ref{handledecomp}, let $\gamma$ denote the generator corresponding to $\{pt\}\times D^2\subset Y$, so that $C_2(M)$ can be written $\Z^{2k+1}=\Sp_\Z(\gamma_1,\ldots,\gamma_k,\gamma)$. Any 2-cycle representing a nonzero element of $H_2(M)$ must intersect $\Sigma'$ algebraically zero times, since $[\Sigma]=[\Sigma']=0\in H_2(M)$. Since $\gamma$ is the only basis element that intersects $\Sigma'$, any cycle representing a nonzero homology element must have $\gamma$ appearing with coefficient zero. Thus the image of $H_2(M)$ in $H_2(M,X)$ is zero.

For the second claim, let the four chain groups in diagram~(\ref{eq:c3}) be the cellular 2-chain groups coming from the handle decomposition of $M$ in Section~\ref{handledecomp}. Then the vertical maps in diagram~(\ref{eq:c3}) are the obvious inclusions and it is not hard to see that $\phi$ is induced by the attaching maps for the 3-cells in $M$.\end{proof}
Because $X$ has no 3-handles, there is an isomorphism $\Kr\iota_\Gamma\rightarrow H_2(X)$, where $\iota_\Gamma\co\Z^k\to H_1(\Sigma)$ is induced by including each element of $\Gamma$. At the chain level, this isomorphism is obtained by mapping a surface $F$ into $\Sigma=f^{-1}(v_0)$ such that $\partial F=a\in\Kr\iota$, then capping its boundary circles with the ``thimbles" given by the corresponding folds in $X$. In a similar vein, for any generator $\x$ the projection $W_{\alpha\beta}\to\Sigma$ leads to an isomorphism $\pi_2(\x,\x)\cong\Kr\iota_{\alpha\beta}$, where $\iota_{\alpha\beta}\co\Z^{2k}\to H_1(\Sigma)$ comes from including the elements of $\ba$ and $\bb$. To relate $\pi_2(\x,\x)$ to $H_2(X)$, view $\pi_2(\x,\x)$ as a subgroup of the free Abelian group generated by the path components of $\Sigma\setminus(\ba\cup\bb)$ and define the homomorphism $p\co\pi_2(\x,\x)\to H_2(X)$ given by projecting out the coordinates of the regions that lie between $\alpha_i$ and $\beta_i$, $i=1,\ldots,k$. This new vector can be naturally interpreted as a 2-chain in $\Sigma\setminus\Gamma$ in the obvious way, and it is not difficult to verify that its boundary is a sum of the elements of $\Gamma$, so that capping it off with thimbles defines a cycle in $X$. Then for $\varphi\in\pi_2(\x,\x)$, $p(\varphi)$ is the class of that cycle.\\
The subgroup of $\pi_2(\x,\x)$ consisting of linear combinations of the regions lying between $\alpha_i$ and $\beta_i$ for $i=1,\ldots,k$ is called $\mathfrak{T}_{\alpha\beta}(\x,\x)$ in the following proposition and Section~\ref{slides}; see for example Definition~\ref{thin}.
\begin{prop}\label{h2prop}The homomorphism $p\co\pi_2(\x,\x)\to H_2(X)$ is surjective, and $\Kr p=\mathfrak{T}_{\alpha\beta}(\x,\x)$.\end{prop}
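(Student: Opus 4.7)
The plan is to reduce the proposition to elementary linear algebra by identifying $p$, through the isomorphisms $\pi_2(\x,\x)\cong\Kr\iota_{\alpha\beta}$ and $H_2(X)\cong\Kr\iota_\Gamma$ recalled in the excerpt, with the coordinate-sum map $q\co\Kr\iota_{\alpha\beta}\to\Kr\iota_\Gamma$, $(n,m)\mapsto n+m$.

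First I would set up the algebraic picture: since each $\alpha_i$ and $\beta_i$ is a small isotopic perturbation of $\gamma_i$ in $\Sigma$, we have $[\alpha_i]=[\beta_i]=[\gamma_i]$ in $H_1(\Sigma)$, so $\iota_{\alpha\beta}(n,m)=\sum_i(n_i+m_i)[\gamma_i]$ and $q$ is well defined.

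Next I would verify that $p$ agrees with $q$ under these identifications. Let $\varphi\in\pi_2(\x,\x)$ have boundary $\partial\varphi=\sum n_i\alpha_i+\sum m_j\beta_j$. By property~(2) of the main perturbation the region between $\alpha_i$ and $\beta_i$ is a pair of thin bigons separating the two thick regions flanking $\gamma_i$. Applying $p$ zeros the thin coefficients and identifies each thick region with the corresponding region of $\Sigma\setminus\Gamma$; with compatible orientations the $\alpha_i$- and $\beta_i$-arcs of $\partial\varphi$ collapse onto $\gamma_i$, yielding $\partial p(\varphi)=\sum_i(n_i+m_i)\gamma_i$. Capping with thimbles then identifies $p(\varphi)$ with $n+m\in\Kr\iota_\Gamma$, so $p=q$.

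With $p=q$ in hand, both claims follow. For surjectivity, given $a\in\Kr\iota_\Gamma$ the pair $(a,0)\in\Z^k\oplus\Z^k$ lies in $\Kr\iota_{\alpha\beta}$ and satisfies $q(a,0)=a$. For the kernel, $\Kr q=\{(n,-n):n\in\Z^k\}$; the corresponding chain in $\Sigma\setminus(\ba\cup\bb)$ has boundary $\sum n_i(\alpha_i-\beta_i)$, and after subtracting $\sum n_i T_i$ (where $T_i$ denotes the signed sum of the two bigons between $\alpha_i$ and $\beta_i$, which satisfies $\partial T_i=\alpha_i-\beta_i$) the remainder is a 2-cycle in $\Sigma\setminus(\ba\cup\bb)$. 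Such a 2-cycle is constant by connectivity, hence trivial in $\pi_2(\x,\x)$, so $\varphi$ lies in $\mathfrak{T}_{\alpha\beta}(\x,\x)$. The reverse inclusion $\mathfrak{T}_{\alpha\beta}(\x,\x)\subseteq\Kr p$ is immediate from the definition of $p$.

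The hard part is the identification $p=q$: tracking signs and orientations through the collapse relies on the standard local model from property~(1) of the main perturbation, and one must verify that the thin-bigon chain corresponding to $(n,-n)$ is actually a valid periodic domain in $\pi_2(\x,\x)$ with corners matching $\x$ no matter which of the two points of $\alpha_i\cap\beta_i$ carry entries of $\x$—here the two-intersection-point condition from property~(2) is essential.
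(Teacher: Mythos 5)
Your approach is genuinely different from the paper's. The paper works directly with the coefficient vectors: the inclusion $\mathfrak{T}_{\alpha\beta}(\x,\x)\subset\Kr p$ is immediate from the definition of $p$; conversely any $\varphi$ outside $\mathfrak{T}_{\alpha\beta}(\x,\x)$ has a nonzero coefficient on some region of $\Sigma\setminus(\ba\cup\bb\cup D)$, so $p(\varphi)$ is a nonzero $2$-cycle, hence a nonzero class in $H_2(X)$ because $X$ has no $3$-handles; and for surjectivity one assigns the thick regions the multiplicities specified by a cycle representing $A$ and then adds an element of $\mathfrak{T}_{\alpha\beta}(\x,\x)$ so the total boundary becomes a linear combination of full $\alpha$- and $\beta$-circles. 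You instead transport $p$ across the boundary maps to $\Kr\iota_{\alpha\beta}$ and $\Kr\iota_\Gamma$ and do linear algebra there.

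The gap is the sentence ``such a $2$-cycle is constant by connectivity, hence trivial in $\pi_2(\x,\x)$.'' A constant $2$-cycle is $c[\Sigma]$ for some $c\in\Z$, and a nonzero multiple of $[\Sigma]$ is not trivial in $\pi_2(\x,\x)$ and certainly does not lie in $\mathfrak{T}_{\alpha\beta}(\x,\x)$. In effect you are treating the boundary map from periodic domains to $\Kr\iota_{\alpha\beta}$ as injective, but it kills $\Z[\Sigma]$; the same $\Z[\Sigma]$ appears as a summand of $H_2(X)$ (via the $2$-cell of $\Sigma$ in the handle decomposition of $X$) and is invisible to $\Kr\iota_\Gamma$. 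So the identification $p=q$ only pins $p$ down modulo these $[\Sigma]$-ambiguities, and the surjectivity argument has the parallel defect: producing $(a,0)$ with $q(a,0)=a$ shows $q$ hits $\Kr\iota_\Gamma$ but not that $p$ hits the $[\Sigma]$ summand of $H_2(X)$. Both issues are repaired by one observation which the paper's direct proof makes implicitly: $p$ sends $[\Sigma]$ to the chain with coefficient $1$ on every thick region (capped by thimbles), a nonzero cycle and hence a nonzero class since $X$ has no $3$-handles. That forces $c=0$ in your kernel computation and supplies the missing generator for surjectivity. With this added, your linear-algebra route goes through, but it is more delicate than the paper's coefficient-by-coefficient argument, which never needs the (in fact only approximate) identifications $\pi_2(\x,\x)\cong\Kr\iota_{\alpha\beta}$ and $H_2(X)\cong\Kr\iota_\Gamma$ at all.
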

\begin{proof}It is clear that $\mathfrak{T}_{\alpha\beta}(\x,\x)\subset\Kr p$ by definition. Let $D$ denote the collection of path components of $\Sigma\setminus(\ba\cup\bb)$ that lie between $\alpha_i$ and $\beta_i$, $i=1,\ldots,k$. If $\varphi\in\pi_2(\x,\x)\setminus\mathfrak{T}_{\alpha\beta}(\x,\x)$, then there is at least one region in $\Sigma\setminus(\ba\cup\bb\cup D)$ with nonzero coefficient, so that $p(\varphi)\neq\boldsymbol{0}$. Thus, $\mathfrak{T}_{\alpha\beta}(\x,\x)\supset\Kr p$. For $A\in H_2(X)$,  it is straightforward to construct an element  $D\in p^{-1}(A)$ as a linear combination of path components of $\Sigma\setminus(\ba\cup\bb)$ by first marking each path component (other than the generators of $\mathfrak{T}_{\alpha\beta}(\x,\x)$) with the multiplicity specified by $A$, then choosing an element of $\mathfrak{T}_{\alpha\beta}(\x,\x)$ such that $\partial D$ is a linear combination of $\alpha$ and $\beta$ circles.\end{proof}\end{subsection}
\begin{subsection}{Lemmas about surface diagrams}\label{sdlemmas}Switching gears entirely and using terminology and ideas from \cite{W2}, \cite{B} and \cite{BH}, the goal of this section is to prove some results about surface diagrams that elaborate on the main result of \cite{W2} and appear in the proof of Theorem~\ref{thm}.
\begin{lemma}\label{slidelem}The handleslide and multislide moves are sequences of slides. The shift move is a sequence of slides, followed by a reordering of the elements of $\Gamma$.\end{lemma}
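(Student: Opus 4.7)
The plan is to work directly from the definitions of slide, handleslide, multislide and shift as they appear in \cite{W2,B,BH}. Recall that a \emph{slide} is the elementary move replacing a single circle $\gamma_i\in\Gamma$ by a band sum $\gamma_i\#_b\gamma_j$ along an embedded band $b\subset\Sigma$ whose interior is disjoint from $\Gamma\setminus\{\gamma_i\}$. The three other moves are each described by a finite combinatorial recipe, and my strategy is simply to rewrite that recipe as an ordered composition of such single-band slides, verifying admissibility of the intermediate configurations at each step.

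For the \emph{handleslide}, I would use the fact that the move in \cite{B} is defined as a band sum of one circle with one other circle. This is already a single slide, once one checks that the band may be taken disjoint from the remaining $\gamma_\ell$ after a small isotopy; when the definition allows simultaneous band sums with several circles, the bands are supported in disjoint regions and can be performed one after the other, each step producing a valid intermediate surface diagram.

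For the \emph{multislide} I would induct on the number of circles being slid. At each step one performs a single slide on one of the circles scheduled for modification; the key observation is that the remaining scheduled slides are still encoded by bands disjoint from the new $\gamma_i'$, so the induction continues through the full multislide recipe. For the \emph{shift move}, the underlying topological change is realized by slides that enact the relevant Dehn twist along a vanishing cycle: twisting $\gamma_j$ along $\gamma_1$ is isotopic to a band sum of $\gamma_j$ with $\gamma_1$ along a band running once around an annular neighborhood of $\gamma_1$, and so is a single slide. Performing these slides in the prescribed order and then cyclically relabeling indices reproduces the shift, which is why the lemma has to allow a reordering of $\Gamma$ as the final step.

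The main obstacle is combinatorial bookkeeping rather than geometric content: one must check, for each of the three composite moves, that after each single slide the resulting collection still satisfies the transversality and labeling conventions used to define the next slide in the sequence, and that the final result of the sequence agrees on the nose (not merely up to isotopy) with the output of the compound move as formulated in \cite{W2,B,BH}. Once the bands implementing each move are drawn explicitly, all three decompositions reduce to routine verifications, and the shift move is the only one that genuinely requires the trailing reordering.
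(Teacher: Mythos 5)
Your proposal does not engage with the actual structure of the handleslide, multislide, and shift moves as they appear in the references, and this leads to several claims that are not correct as stated. The handleslide and multislide moves in \cite{W2,BH} are \emph{not} defined as one or two band sums applied to a single circle. Rather, each is realized by applying a particular mapping class in $\Kr\Phi_{\gamma_1}\cap\Kr\Phi_{\gamma_n}$ --- the intersection of the kernels of Hayano's surgery homomorphisms --- to an entire subset of $\Gamma$ simultaneously. The content of the lemma is that such mapping classes, when applied to an embedded circle, can be realized by band sums over $\gamma_1$ and $\gamma_n$, and establishing this requires first writing down the mapping class explicitly. The paper's proof does exactly this: it identifies the handleslide mapping class as $t_{\tilde\delta(\eta)}\cdot t_{\gamma_1}^{-1}\cdot t_{\gamma_n}^{-1}$ via \cite[Lemma~3.8]{H}, the multislide as a lift of a line-pushing map (a point-pushing map composed with $\Delta$-twists) via \cite[Sections~3.2--3.3]{BH}, and the shift mapping class by the formula $\chi_0=\varphi_0 t_{\gamma_k}t_{\gamma_l}t_{\gamma_k}\Delta^m_{\gamma_k,\gamma_l}$ from \cite[Proposition~4.8]{BH}. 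Only after obtaining these factorizations does one do the case check that each Dehn twist, push map, and $\Delta$-twist factor acts on a simple closed curve by a sequence of slides. Your ``combinatorial bookkeeping'' has no path to these factorizations, and without them there is nothing concrete to check.

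A second, more localized error: you claim that twisting $\gamma_j$ along $\gamma_1$ ``is isotopic to a band sum of $\gamma_j$ with $\gamma_1$ along a band running once around an annular neighborhood of $\gamma_1$, and so is a single slide.'' This is only true when $\gamma_j\cap\gamma_1$ is a single transverse point. When $\gamma_j$ meets $\gamma_1$ at $m>1$ points, $t_{\gamma_1}(\gamma_j)$ requires $m$ slides, and verifying that these can be done in sequence while keeping the intermediate diagrams well-defined is exactly the ``checking cases'' step that occupies the paper's Figures~3--5. Your induction proposal for multislide has the same problem: you never specify what the inductive unit is, and the moves as defined do not decompose into a schedule of independent single slides without first invoking the Dehn-twist factorization. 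Finally, your handling of the trailing reordering in the shift move is asserted rather than derived; in the paper it falls out of the explicit form of $\chi_0$, which sends $(\gamma_1,\ldots,\gamma_k,\ldots,\gamma_l)$ to $(\gamma_1,\ldots,\gamma_k,\gamma_l,\chi_0(\gamma_{k+1}),\ldots,\chi_0(\gamma_{l-1}))$.
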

\begin{proof}In \cite[Section~2.2]{H} there appears a \emph{surgery homomorphism} \[\Phi_c\co\mcg(\Sigma)(c)\to\mcg(\Sigma'),\] where $\mcg(\Sigma)(c)$ is the mapping class group of orientation-preserving diffeomorphisms of the genus $g$ oriented closed surface $\Sigma$ that fix the unoriented isotopy class of the embedded circle $c$, and $\mcg(\Sigma')$ is the mapping class group of the genus $g-1$ surface $\Sigma'$ (see also the introduction of \cite[Section~2.3]{B}, where the map is called $\psi_\gamma$). According to Propositions 4.4 and 4.9 of \cite{BH}, any multislide or handleslide, applied using the pair $\gamma_1,\gamma_n\in\Gamma$, is realized by applying an element of $\Kr\Phi_{\gamma_1}\cap\Kr\Phi_{\gamma_n}$ to a subset of $\Gamma$.\footnote{See also \cite[Figure~6]{W2} for an example of how to construct an element of $\Kr\Phi_{\gamma_1}\cap\Kr\Phi_{\gamma_n}$ for the handleslide move, and how such an element is realized by a sequence of slides.}

For handleslides, $\gamma_1$ and $\gamma_n$ are disjoint and, according to \cite[Theorem~1.1]{W2}, we can assume their union is nonseparating. Consider the genus $g-2$ surface $\Sigma''$ obtained from replacing $\gamma_1$ and $\gamma_n$ with pairs of disks whose centers are labeled $v_1,v_2$ for the disks coming from $\gamma_1$ and $w_1,w_2$ for the disks coming from $\gamma_n$. Hayano constructs an element of $\Kr\Phi_{\gamma_1}\cap\Kr\Phi_{\gamma_n}$ in \cite[Section~3]{H} as the lift (according to the pair of surgeries specified by the pair of 0-spheres $(v_1,v_2)$ and $(w_1,w_2)$) to $\mcg(\Sigma)$ of the point pushing map defined by an oriented embedded arc $\eta\subset\Sigma''$ connecting $v_i$ to $w_j$, for $i,j\in\{1,2\}$. With this lift understood, it is easy to see that the lemma is proved for multislides. However, there is another way to see it: In \cite[Lemma~3.8]{H}, Hayano exhibits this lift as the product of Dehn twists $t_{\tilde\delta(\eta)}\cdot t_{\gamma_1}^{-1}\cdot t_{\gamma_n}^{-1}$, where $\tilde\delta(\eta)$ is the obvious lift of the embedded circle $\partial\overline{\nu\eta}$ to $\Sigma$, where $\nu\eta$ is a regular neighborhood of $\eta$. One can then deduce the lemma by checking it for each of the two ways an embedded circle can intersect the\begin{figure}\capstart\begin{center}\includegraphics{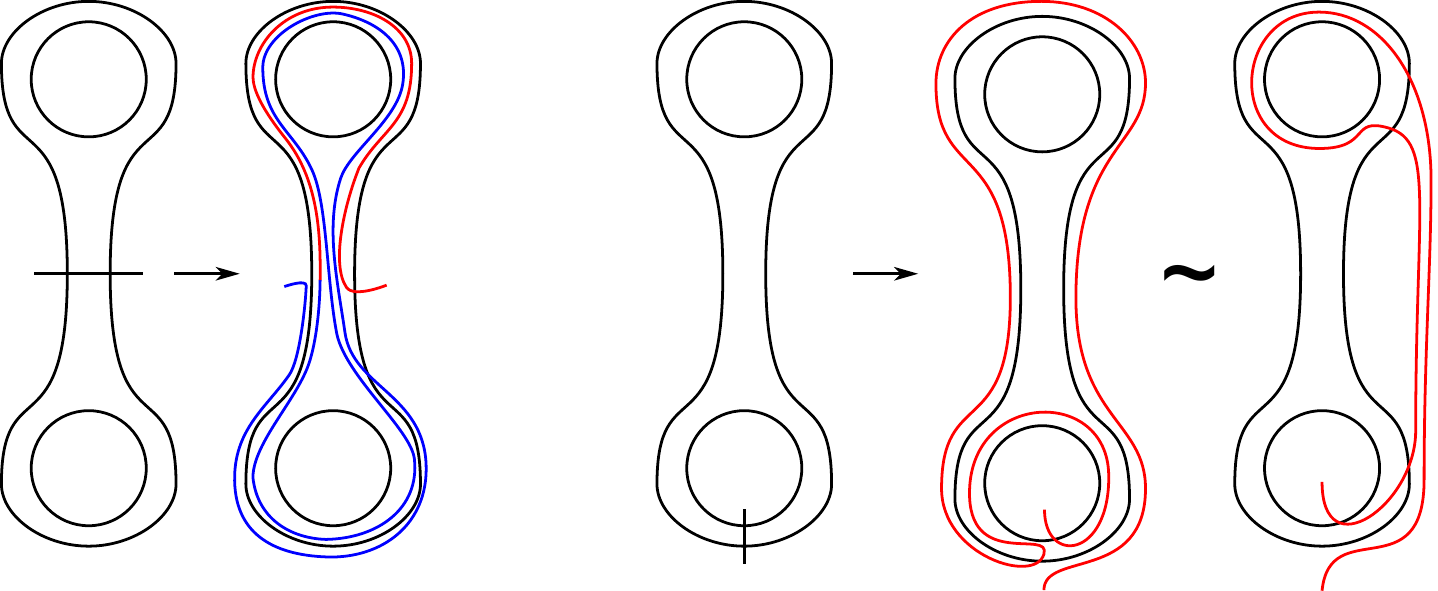}\end{center}\caption{Checking that the surface diagram handleslide move is a sequence of slides. The two smaller circles are interchangeably $\gamma_1$ and $\gamma_n$ along each of which the arc gets a negative Dehn twist, while the larger circle is $\tilde\delta(\eta)$ in the language of \cite{H}, along which the arc gets a positive Dehn twist.\label{mscheck}}\end{figure} triple $\tilde\delta(\eta),\gamma_1,\gamma_n$ as in Figure~\ref{mscheck}.

For multislides, $\gamma_1$ and $\gamma_n$ intersect at one transverse point. Consider the genus $g-1$ surface $\Sigma'$ obtained by replacing a neighborhood of $\gamma_1\cup\gamma_n$ with a disk marked with a point $p$. Behrens and Hayano show in sections 3.2 and 3.3 of \cite{BH} that an element of $\Kr\Phi_{\gamma_1}\cap\Kr\Phi_{\gamma_n}$ is the lift to $\mcg(\Sigma)$ (according to the re-attachment of the punctured torus at $p$) of what they call a \emph{line-pushing map} along an embedded circle in $\Sigma'$ based at $p$. By their lemma 3.16, a line-pushing map is a point-pushing map composed with a power of what they call a $\Delta$-twist, which is a positive half-twist along the boundary of the disk. As for handleslides, it is not hard to see that the effect of such a lift on any simple closed curve is a collection of slides over $\gamma_1$ and $\gamma_n$, or to check cases for the Dehn twists given by Equation (12) of that paper and for $\Delta$-twists. For this reason, the lemma is proved for multislides.

The last case is the shift move. To make it easier to read, the rest of this proof uses the indexing conventions of \cite{BH}, in which the initial fold merge occurs between elements $\gamma_k,\gamma_l\in\Gamma$ for $1<k<l$, and $\Gamma=(\gamma_1,\ldots,\gamma_l)$. Like multislide, shift is a move one can perform for any pair $\gamma_k,\gamma_l$ that intersects at one transverse point, sending \[(\gamma_1,\ldots,\gamma_k,\ldots,\gamma_l)\mapsto\left((\gamma_1,\ldots,\gamma_k,\gamma_l,\chi_0(\gamma_{k+1}),\ldots,\chi_0(\gamma_{l-1})\right).\] 
In \cite[Proposition~4.8]{BH}, there appears the formula \begin{equation}\label{eq:shiftformula}\chi_0=\varphi_0t_{\gamma_k}t_{\gamma_l}t_{\gamma_k}\Delta^m_{\gamma_k,\gamma_l}.\end{equation}
As constructed directly after the proof of \cite[Proposition~4.5]{BH}, the map $\varphi_0$ is the lift of a push map in the lower-genus surface obtained by replacing $\gamma_l$ with two marked disks, which again results in some collection of slides being applied to $(\gamma_{k+1},\ldots,\gamma_{l-1})$. The condition of being realized by slides is preserved by applying the other three Dehn twists, as shown in Figure~\hyperref[shift1]{4}, which at the top of each subfigure depicts the three twists in succession being applied to the circle $\varphi_0(\gamma_i)$ in a neighborhood of $\gamma_k\cup\gamma_l$, $k+1\leq l-1$ (the properly embedded arc $\varphi_0(\gamma_i)$ gets a Dehn twist along each dotted circle). At the bottom of the each subfigure, a sequence of slides achieves the same modification. Finally, the $\Delta$-twist $\Delta_{\gamma_k,\gamma_l}$ (which in Formula~\ref{eq:shiftformula} is raised to the power $m$) gets a similar treatment in Figure~\ref{deltatwist}.
\begin{figure}\capstart%
	\centering
	\subfloat[\ ]{\label{shift1}\includegraphics[width=0.4\linewidth]{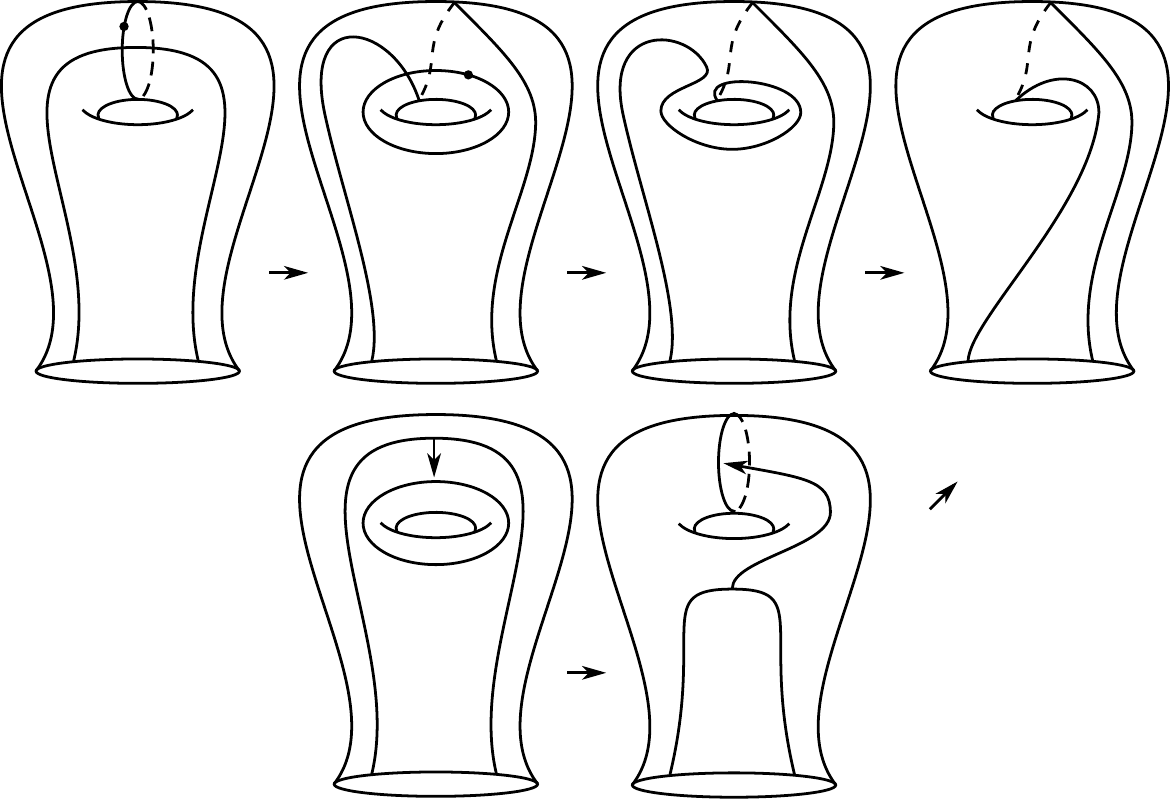}} \qquad \capstart
	\subfloat[\ ]{\label{shift2}\includegraphics[width=0.4\linewidth]{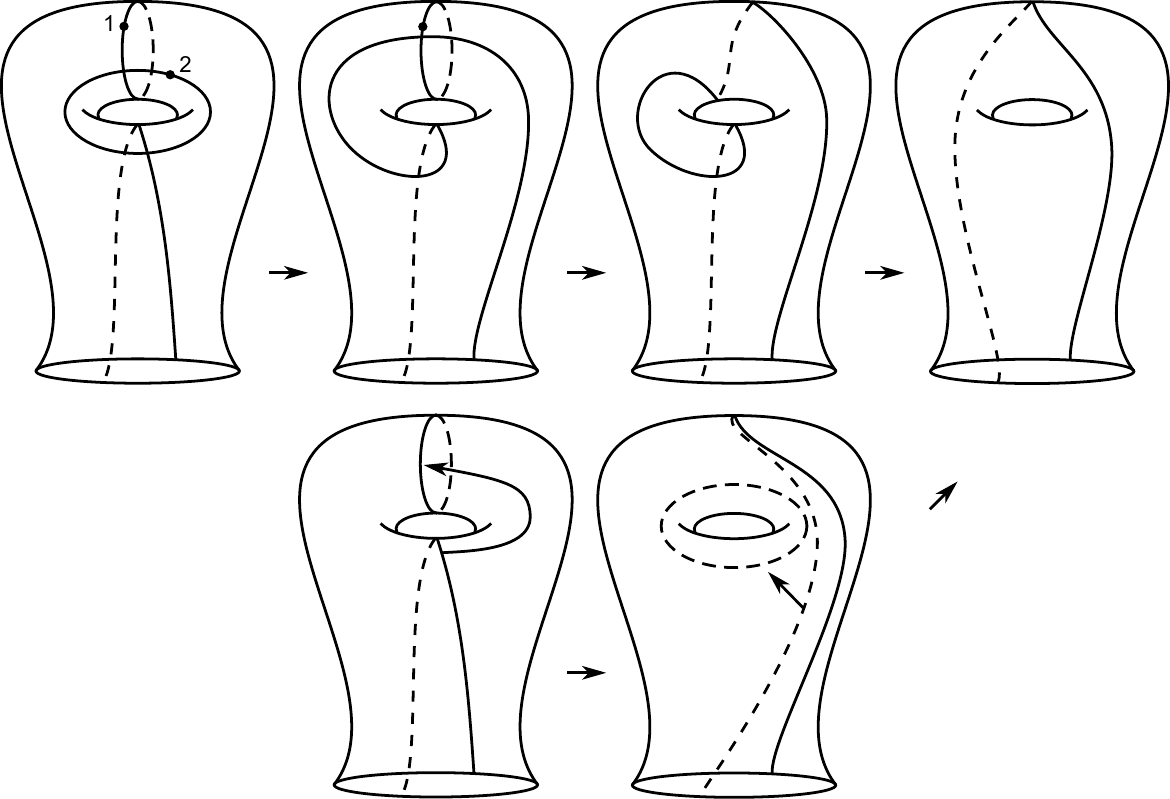}}
	\caption{\label{shift}Realizing part of the shift move by slides over $\gamma_k$ and $\gamma_l$.}
\end{figure}
\begin{figure}\capstart%
	\centering
	\subfloat[\ ]{\label{deltatwist1}\includegraphics[width=0.3\linewidth]{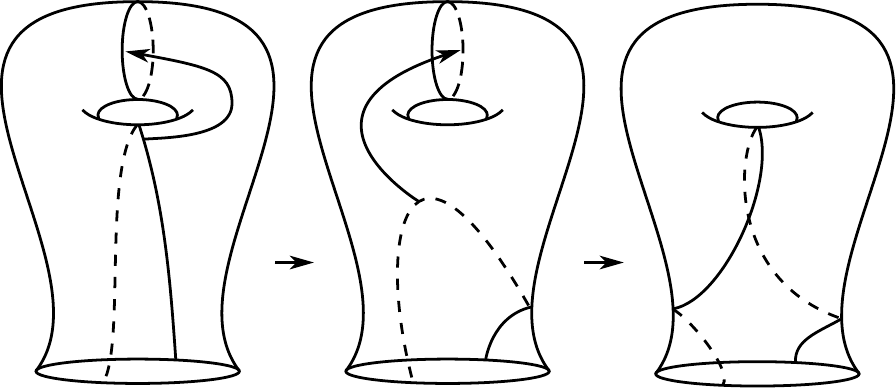}}\qquad \qquad \qquad \capstart
	\subfloat[\ ]{\label{deltatwist2}\includegraphics[width=0.3\linewidth]{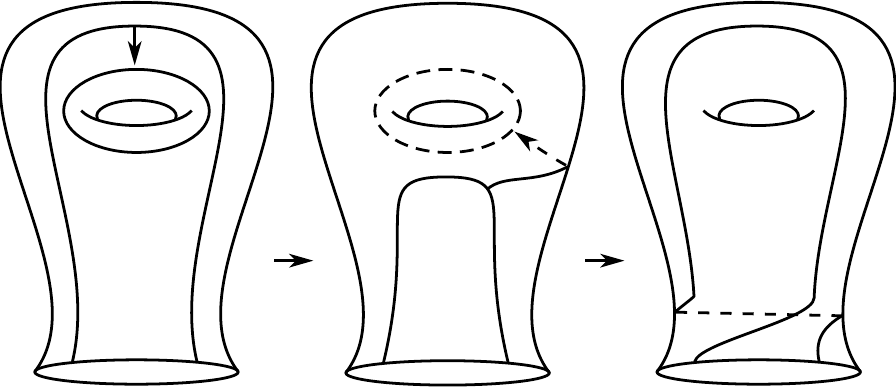}}
	\caption{\label{deltatwist}Checking that a $\Delta$-twist (as applied to an embedded circle) can be realized as a sequence of slides over $\gamma_k$ and $\gamma_l$.}
\end{figure}
\end{proof}
The main theorem of \cite{W2} states that the collection of surface diagrams coming from a fixed homotopy class of maps are all related by stabilization and the following \emph{genus-preserving moves}: diffeomorphism of $\Sigma$, isotopy of individual elements of $\Gamma$, handleslide, multislide, and shift. The following lemma refines this theorem, and should be a surprise to 3-manifold tolopogists, who may\begin{figure}\capstart%
	\centering
	\subfloat[Cerf diagram for 3-manifolds. ]{\makebox[6cm][c]{\includegraphics{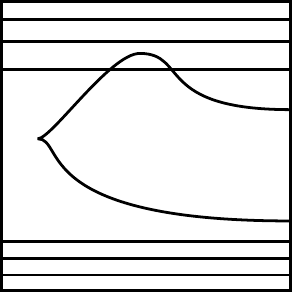}\label{cerf}}} \qquad \capstart
	\subfloat[Decorated critical surface for surface diagrams.]{\makebox[6cm][c]{\includegraphics{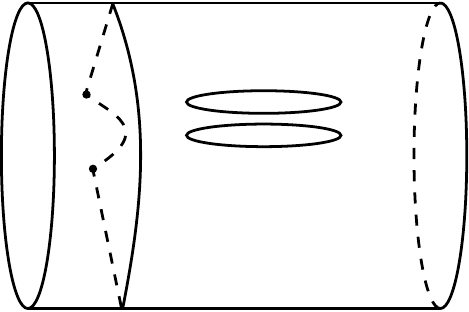}\label{cbd}}}
	\caption{\label{cerfcbd}Stabilization followed by handleslide. In the left, the lines are fold arcs traced out by Morse critical points. In the right, the arcs are places where the deformation is not injective on its critical locus. In both, time increases to the right.}
\end{figure} know that Heegaard splittings of closed 3-manifolds typically require stabilization before they become equivalent, even when they have the same genus; see for example \cite{HTT}. In that paper, the two Heegaard diagrams coming from inequivalent splittings are related by switching the roles of the $\alpha$ and $\beta$ circles, a hurdle that does not exist for surface diagrams. On a more fundamental level, consider a Heegaard stabilization that introduces an $\alpha$ circle $\alpha_n$, followed by a handleslide of $\alpha_n$ over another $\alpha$ circle. The Cerf graphic of the corresponding deformation $d$ of Morse functions contains a cusp, one of whose two adjacent folds intersects a third fold twice to form a bigon that cannot be eliminated by an $R_2$ deformation of $d$ that cancels the intersections (such a situation is described in the paragraph immediately following \cite[Proposition~A.4]{W2}; see also Figure~\ref{cerf}). For this reason, there is not a straightforward way to modify a Cerf graphic to switch the order of a stabilization followed by a handleslide. This is a kind of linking behavior that is unavoidable for 3-manifolds, but can be circumvented for surface diagrams.

See \cite[Theorem~6.5]{H} for the definitive treatment of stabilization, or \cite{W2} for a summary of its effects on a surface diagram and a detailed explanation of Figure~\ref{cbd}.
\begin{lemma}\label{stablem}Suppose two surface diagrams $(\Sigma,\Gamma)$ and $(\Sigma,\Gamma')$ are related by one of the following sequences of moves.\begin{enumerate}\item Stabilize, then perform genus-preserving moves, then de-stabilize.\item De-stabilize, then perform genus-preserving moves, then stabilize.\end{enumerate}Then there is a sequence of genus-preserving moves relating $(\Sigma,\Gamma)$ and $(\Sigma,\Gamma')$.\end{lemma}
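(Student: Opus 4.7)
The plan is to reduce both cases to the commutation of a single slide with a stabilization. By Lemma~\ref{slidelem}, every handleslide, multislide, or shift in the intermediate sequence is itself a sequence of slides (possibly followed by a reindexing of $\Gamma$), and diffeomorphisms of $\Sigma$, isotopies of individual circles, and reorderings all commute with stabilization by conjugation or are trivial to reorder. Case~(2) is immediate from this reduction: if $(\Sigma,\Gamma)$ de-stabilizes to a lower-genus $(\Sigma'',\Gamma_d)$, a genus-preserving path $\Gamma_d\leadsto\Gamma'_d$ is applied, and we re-stabilize $(\Sigma'',\Gamma'_d)$ back to $(\Sigma,\Gamma')$ using (up to a diffeomorphism of $\Sigma$) the same local stabilization configuration, then the slides in $\Sigma''$ lift verbatim to slides among the non-stabilization circles of $\Gamma$, yielding the desired genus-preserving path in $\Sigma$.

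Case~(1) is more delicate, because here we must show that a single slide performed between a stabilization and its de-stabilization can be replaced by slides among the original elements of $\Gamma$ alone. A slide in the stabilized surface $\Sigma'$ that involves neither of the pair of circles introduced by the stabilization descends to the same slide in $\Sigma$ and can be performed before the cancelling stabilize--destabilize pair. For a slide that does involve one of the stabilization circles, the key is precisely the contrast between the Cerf graphic for 3-manifolds (Figure~\ref{cerf}) and the decorated critical surface for surface diagrams (Figure~\ref{cbd}): the linking obstruction that prevents reordering a stabilization with a subsequent slide in the Cerf setting, and that forces extra stabilizations in the analogous Reidemeister-type theorems for Heegaard splittings, admits a local deformation in the decorated critical surface picture. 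Unpacking that deformation exhibits the net effect of the slide on the non-stabilization circles as a sequence of slides among the elements of $\Gamma$, after which the stabilize--destabilize pair again cancels. Induction on the number of slides in the intermediate sequence then completes the argument, and the case analysis for diffeomorphisms and isotopies that might perturb the stabilization configuration is handled by conjugating through.

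The main obstacle will be making the local deformation of the decorated critical surface rigorous enough to extract an explicit replacement sequence of slides among the original circles. In spirit this is a case analysis analogous to, but more involved than, the verifications in Figures~\ref{mscheck},~\ref{shift}, and~\ref{deltatwist} used in the proof of Lemma~\ref{slidelem}, since one must track the effect of the offending slide on \emph{all} of $\Gamma$ and verify that the resulting configuration still admits a compatible de-stabilization. Compatibility with the subsequent de-stabilization is precisely what restricts the form of slides involving stabilization circles and forces them to reduce to slides among the original circles, which is the feature absent in the Cerf-graphic setting and the reason the analogous statement fails for Heegaard splittings.
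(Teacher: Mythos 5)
Your overall strategy — reduce to commuting a single slide with a stabilization, and exploit the asymmetry between the 3-manifold Cerf graphic and the surface-diagram decorated critical surface — points at the right intuition, but both cases have substantive gaps where your argument diverges from what the paper actually does.

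For Case~(2), your claim that slides in $\Sigma''$ ``lift verbatim'' under the hypothesis that the re-stabilization uses ``(up to a diffeomorphism of $\Sigma$) the same local stabilization configuration'' is not justified and is in fact the crux of the matter: there is no reason the de-stabilization at the start and the stabilization at the end occur at the same fold arc or with matching local models. The paper's argument instead pushes the de-stabilization forward past all the genus-preserving moves (using \cite[Prop.~3.1, Fig.~20, Fig.~18, Lemma~3.8]{W2}) and finds that the residual data is \emph{not} a trivial cancellation but a \emph{genus-decreasing immersion pair}, which by \cite[Lemma~2.1]{W2} corresponds to an additional sequence of handleslides. Your proposal drops this residual sequence entirely.

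For Case~(1), you have correctly identified the key geometric fact you need, but you have not proved it; you say ``unpacking that deformation exhibits the net effect of the slide \ldots as a sequence of slides among the elements of $\Gamma$'' and then concede in your final paragraph that making this rigorous is ``the main obstacle.'' That obstacle is the proof. The paper discharges it not by trying to explicitly extract a replacement sequence of slides, but by working at the level of 2-parameter families of maps $M \to S^2$: one first moves the flips to a fold arc disjoint from the fold arcs involved in the genus-preserving move (via \cite[Fig.~20]{W2}), then constructs the homotopy of deformations that slides the stabilization's $R_2$ arc past the genus-preserving move's $R_2$ arc (Figure~\ref{switchdef}, justified by an argument parallel to \cite[Prop.~A.7]{W2} and the disjointness condition \cite[Def.~A.1]{W2}), then repeats to bring the stabilization next to the de-stabilization, moves it to the same fold arc, and finally collapses the flip--$R_2$--$R_2^{-1}$--flip$^{-1}$ sandwich after noting the vanishing-cycle identification is preserved across the two $R_2$ moves. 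None of this requires, or produces, an explicit list of slides; it just produces a lower-genus deformation with no stabilization, which by definition is a sequence of genus-preserving moves. Your proposed route of extracting an explicit sequence of slides is strictly harder than what the lemma requires. To salvage your approach you would need, at minimum, the substitute for \cite[Prop.~A.7]{W2} that your ``local deformation'' gestures at, together with a treatment of the residual immersion pair in Case~(2).

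One smaller point: your phrase ``the pair of circles introduced by the stabilization'' imports Heegaard-splitting language that does not match surface-diagram stabilization, where the move changes the genus of $\Sigma$ itself and alters $\Gamma$ via a more involved local model (see \cite[Thm.~6.5]{H}); the case split you propose on whether a slide ``involves a stabilization circle'' would need to be reformulated in that setting before it could be made precise.
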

\begin{figure}[h]\capstart\begin{center}\includegraphics{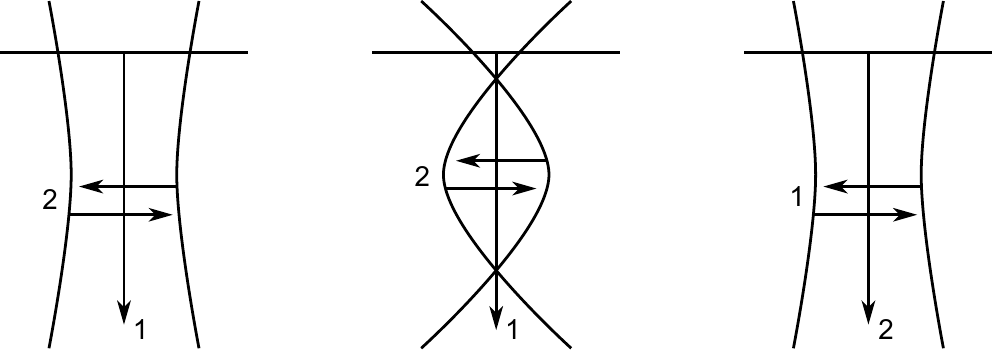}\end{center}\caption{Switching the order of a stabilization and a handleslide.\label{switchdef}}\end{figure}
\begin{proof}Case 1. The following arguments freely use language from \cite{BH,W2}. We first explain why it is possible to switch the order of stabilizations and handleslides without changing the fibration maps at either end of the deformation; switching stabilizations with shifts and multislides will be similar. It is important to note that though the order of moves switches, so that a handleslide move occurs in a lower-genus surface diagram which later stabilizes, there may not be an obvious way to see that handleslide for a given initial deformation. In \cite[Figure~20]{W2}, there appears a way to change the fold arc on which a flip occurs by a homotopy that fixes the endpoints of a given deformation. For this reason, one may assume the flipping moves for the stabilization occur on a fold arc other than the fold arcs that undergo merge or $R_2$ deformations for the subsequent genus-preserving move. The next step is to move the stabilization forward in the deformation past the genus-preserving move. This corresponds in Figure~\ref{cbd} to moving the immersion pair (the two circles) to be to the left of the immersion single (the twice-dotted circle). Figure~\ref{switchdef} has base diagrams with arrows that signal impending movements of fold arcs. It is meant to explain what happens when the circles in Figure~\ref{cbd} pass each other. When performing a stabilization, there occur two flipping moves, then a fold arc moves across the entire critical image in an $R_2$ deformation: the left side has this (horizontal) arc move down before the $R_2$ deformation of a handleslide. We want it to move down \emph{after} that $R_2$ deformation, as in the right side. If this is possible, then the proposed switch is possible, because the handleslide deformation is supported in the preimage of Figure~\ref{switchdef}. The middle figure represents an intermediate phase of the switch, in which the fold arc moves down in the middle of the handleslide $R_2$. The question is whether the proposed 2-parameter family of maps $M\to S^2$ exists, and its existence follows from an argument like the proof of \cite[Proposition~A.7]{W2}, where the disjointness condition \cite[Definition~A.1]{W2} is achieved at every stage of the modification by appropriately pushing away any intersections of (a one-parameter family of) vanishing sets over each point between the three sets of roughly vertical fold arcs in Figure~\ref{switchdef}. The arguments for switching a stabilization and a multislide or shift deformation are similar, but less involved because they involve a one-parameter family of deformations moving cusps into the higher genus side of the horizontal fold arc, instead of having a one-parameter family of $R_3$ deformations.

With this understood, move the stabilization forward until it occurs just before the de-stabilization and move the stabilization to occur on the same fold arc as the de-stabilization. This results in a deformation consisting of genus-preserving moves followed by a stabilization and de-stabilization at the same fold arc. Looking more closely at how this appears in base diagrams, this deformation consists of two flips, then an $R_2$ deformation, then a reverse $R_2$ deformation, then two inverse flips. During the two $R_2$ deformations, a system of reference paths from a reference point (whose fiber is $\Sigma$) to the higher-genus sides of the fold arcs remains disjoint from the fold image, so that there is an identification of vanishing cycles before and after canceling the intersections by $R_2$ moves.\footnote{This pair of $R_2$ deformations have fold arcs approaching each other from the side \emph{opposite} that expected of a handleslide deformation.} For this reason, the interval between the $R_2$ moves during which the critical image is embedded in the sphere can be contracted, so that the critical image remains immersed for the entire time between the flips. Now there appears the formation of two loops via flipping moves, which expand to contain all the cusps, and then shrink back down via inverse flipping moves. By a further homotopy of the deformation, decrease this expansion so there are no cusp-fold crossings, so that there occur two flips immediately followed by two inverse flips. These four flips can then clearly be eliminated from deformation, and this concludes Case (1).

Case 2. The argument is to first move the de-stabilization forward to occur just before the stabilization. This is even easier than the previous case, because it is a genus-increasing modification of the deformation in the sense that the genus-preserving moves are all transferred into higher-genus diagrams, but we can also rely on arguments from \cite{W2}: the swallowtails of the initial destabilization can be pushed forward toward the swallowtails of the final stabilization using Proposition~3.1 and Figure~20 from \cite{W2}, and Figure~18 depicts their cancellation. Lemma 3.8 allows the immersion locus that remains of the initial de-stabilization to be moved forward past all intervening genus-preserving moves. The result is what is called a \emph{genus-decreasing immersion pair} at the end of the deformation, which by \cite[Lemma~2.1]{W2} corresponds to a sequence of handleslides.\end{proof}
The section closes with an observation related to the topological partitions inhabited by generators.
\begin{prop}\label{hscor}The genus-preserving moves preserve the partition of $\cP$ into sectors, and the partition of sectors according to $H_1(M)$.\end{prop}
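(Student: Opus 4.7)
The plan is to reduce to the minimal list of moves, then to check each reduction carefully. By Lemma~\ref{slidelem}, handleslides and multislides are sequences of slides, while a shift is a sequence of slides followed by a reindexing of the elements of $\Gamma$. Reindexing is invisible to $\varepsilon$ because the cycle $c_{\x\y}=\sum_i(a_i-b_i)$ is a sum over unordered indices, and it is equally invisible to the inclusion-induced map $H_1(X)\to H_1(M)$. So it suffices to treat three kinds of moves: diffeomorphism of $\Sigma$, isotopy of elements of $\Gamma$, and a single slide. In each case I would exhibit a canonical identification of generators before and after and verify that $c_{\x\y}$ is carried to a cycle representing the same class in $H_1(\Sigma)/\iota_*(\Z^{|\ba|}+\Z^{|\bb|})$; the $H_1(M)$ statement then follows from Corollary~\ref{topologicalpartitioncor}.

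The diffeomorphism and isotopy cases are essentially formal. A diffeomorphism $\phi$ of $\Sigma$ extends to the pair $(M,X)$, carrying $\Gamma$ and its main perturbation to their pushforwards and inducing a bijection of generators under which $c_{\x\y}\mapsto c_{\phi(\x)\phi(\y)}$; the induced maps on $H_1(\Sigma)$, $H_1(X)$, and $H_1(M)$ intertwine with this bijection, so every partition is preserved on the nose. Isotopy of elements of $\Gamma$ is handled the same way by extending to an ambient isotopy of $\Sigma$, which carries intersection points of perturbed copies to the corresponding intersection points and deforms $c_{\x\y}$ through homologous cycles.

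The heart of the argument is the slide case. A slide replaces $\gamma_i\in\Gamma$ by a band sum $\gamma_i^s$ satisfying $[\gamma_i^s]=[\gamma_i]\pm[\gamma_j]$ in $H_1(\Sigma)$, applied consistently to both perturbed copies, producing new collections $\ba^s$ and $\bb^s$. Because $[\gamma_i^s]$ is an integer combination of the old homology classes and the non-slid classes are unchanged, the subgroup $\iota_*(\Z^{|\ba^s|}+\Z^{|\bb^s|})\subset H_1(\Sigma)$ coincides with $\iota_*(\Z^{|\ba|}+\Z^{|\bb|})$, so the quotient group in which $\varepsilon$ takes values is literally the same. Moreover, the slid 2-handles produce a submanifold $X^s$ diffeomorphic to $X$ by a diffeomorphism supported in a neighborhood of the band, giving a canonical identification $H_1(X^s)\cong H_1(X)$ compatible with inclusion into $M$. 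To identify generators I would use the nearest-intersection-point rule: flow the entries of $\x$ across the band according to the slide's supporting isotopy, obtaining $\x^s$ with entries on $\ba^s\cap\bb^s$. A direct inspection shows $c_{\x^s\y^s}$ differs from $c_{\x\y}$ only by short arcs supported near the band, and a bookkeeping check shows these corrections represent elements of $\iota_*(\Z^{|\ba^s|}+\Z^{|\bb^s|})$, so $\varepsilon(\x^s,\y^s)=\varepsilon(\x,\y)$ after the canonical identification.

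The main obstacle I anticipate is the last bookkeeping step: making the nearest-intersection-point identification precise when the slide creates or destroys intersection points that do not correspond cleanly across the band, and verifying that the resulting correction to $c_{\x\y}$ really does land in the image of $\iota_*$ rather than in some larger subgroup. I would also need to handle compatibility with the choice of reference generator $\x_0$ during a slide sequence; here the remark immediately following Corollary~\ref{topologicalpartitioncor} allows the reference to be held fixed through intermediate diagrams that are not themselves surface diagrams, and Proposition~\ref{sectorprop4} can then be invoked at the end of the slide sequence once $(\Sigma,\ba^s,\bb^s)$ is recovered as a genuine surface diagram.
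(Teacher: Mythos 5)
Your overall decomposition (via Lemma~\ref{slidelem}) matches the paper's, and the reduction to slides plus a reordering is the right starting point, but the hard case is handled incorrectly and the easy case is where you spend most of your effort.

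The genuine gap is in the reordering step. You dismiss it with ``reindexing is invisible to $\varepsilon$,'' which is true for the map $\varepsilon(\x,\y)$ itself and hence for the partition of $\cP$ into fibers of $\varepsilon$. But the \emph{labeling} of sectors by $H_1(X)$, and hence by $H_1(M)$, is defined via a reference generator $\x_0$, and the reference-generator construction is tied to the ordering: one chooses $p_i\in\gamma_i\cap\gamma_{i+1}$ for \emph{consecutive} circles. After a shift the cyclic ordering changes, so the old $\x_0$ is typically no longer a reference generator for $(\Sigma,\Gamma')$ and Corollary~\ref{topologicalpartitioncor} (which compares reference generators within a single ordered diagram) does not directly give you that the labels agree. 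The paper's proof is almost entirely devoted to this: it identifies the effect of the reorder \[(\gamma_1,\ldots,\gamma_l,\gamma_{l+1}',\ldots,\gamma_k')\mapsto(\gamma_1,\ldots,\gamma_l,\gamma_k',\gamma_{l+1}',\ldots,\gamma_{k-1}')\] with an explicit swap of two entries of $\x_0$ to produce the new reference generator $\x_0'$, checks that the required intersections $\alpha_k'\cap\beta_{l+1}'$ and $\alpha_l'\cap\beta_k'$ are nonempty, and then exhibits choices of the arcs so that $c_{\x\x_0}$ and $c_{\x\x_0'}$ have a common subdivision, hence represent the same class. You would need some version of that argument; without it the claim about the $H_1(M)$ labels is unsupported.

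Conversely, the slide case --- where you anticipate the main obstacle and spend most of your effort --- is actually short in the paper: the cycle $c_{\x\x_0}$ transforms by slides, visibly preserving its class in $H_1(X)$ (the submanifold $X$ itself is unchanged up to isotopy), and the remark after Corollary~\ref{topologicalpartitioncor} licenses carrying a single fixed reference generator through the intermediate (possibly non-surface-diagram) stages. Your observation that $\iota_*(\Z^{|\ba^s|}+\Z^{|\bb^s|})=\iota_*(\Z^{|\ba|}+\Z^{|\bb|})$ is a useful reformulation of the same idea, but the ``bookkeeping near the band'' you worry about is morally already contained in the observation that a band sum changes $c_{\x\x_0}$ by an arc that, together with a parallel copy of $\gamma_j$, bounds, so the correction lies in $\iota_*$. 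In short: move your effort from the slides to the reorder.
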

\begin{proof}The result is immediate for the cases of handleslide and multislide because of Lemma~\ref{slidelem}. However, the shift move reorders the elements of $\Gamma$, so it remains to show this reordering preserves the sector of a generator. To make sense of this statement, let $(\Sigma,\Gamma)$ and $X$ be the surface diagram and submanifold of $M$ corresponding to the map $f\co M\to S^2$ and let $(\Sigma,\Gamma')$ and $X'$ be the resulting surface diagram and submanifold corresponding to $f'$, where $f'$ comes from applying a shift homotopy to $f$. Then $X$ and $X'$ are isotopic submanifolds of $M$, diffeomorphic to $M\setminus\nu F_s$, where $\nu F_s$ is a neighborhood of a lower-genus fiber disjoint from the support of the homotopy $f\to f'$. Similarly, the reference fiber $\Sigma$ exists for both diagrams as elements of the same isotopy class of surfaces in $M$ (see \cite{W2} or \cite{BH} for details on the shift homotopy). Construct $\ba,\bb$ as in Section~\ref{basicdefs} as a copy of $\Gamma$ and a perturbation of $\Gamma$. According to Lemma~\ref{slidelem}, the shift move affects $\Gamma$ by a sequence of slides followed by a reordering, so for a reference generator and generator $\x_0,\x\in\csf(\Sigma,\ba,\bb)$, there is an obvious copy of $\x,\x_0$ after performing the slides, and the cycle $c_{\x\x_0}$ transforms by slides, preserving its homology class in $X$, and thus by definition the sector of $\x$ (recall Section~\ref{sectors} for the construction of $c_{\x\x_0}$). Let $\alpha_i',\beta_i',\gamma_i'$ be obtained from $\alpha_i,\beta_i,\gamma_i$ by the sequence of slides required by the shift move, for which $\gamma_l$ is assumed to be isotopic to a circle that intersects $\gamma_k$ transversely at one point (note that in this paper, $k=|\Gamma|$, which is denoted $l$ in \cite[Section~4.1.2]{BH}). Then the reordering, given by \[(\gamma_1,\ldots,\gamma_l,\gamma_{l+1}',\ldots,\gamma_k')\mapsto(\gamma_1,\ldots,\gamma_l,\gamma_k',\gamma_{l+1}',\ldots,\gamma_{k-1}'),\] corresponds to a change of reference generator from $\x_0$ to $\x_0'$: The entries of $\x_0$ lying in $\alpha_k'$ and $\beta_k'$, which by Corollary~\ref{topologicalpartitioncor} can be assumed to be points $p_{k,1}\in\alpha_k'\cap\beta_1'$ and $p_{k-1,k}\in\alpha_{k-1}'\cap\beta_k'$, are replaced with points $p_{k,l+1}\in\alpha_k'\cap\beta_{l+1}'$ and $p_{l,k}\in\alpha_l'\cap\beta_k'$. In particular, these four intersections are nonempty. In the construction of $c_{\x\x_0}$, one may choose $a_k$ to cross $p_{k,l+1}$ and choose $a_{k-1}$ to cross $p_{l,k}$. Now there is a choice of $c_{\x\x_0'}$ such that $c_{\x\x_0}$ and $c_{\x\x_0'}$ have a common subdivision.\end{proof}
\end{subsection}\end{section}

\begin{section}{Structure of the moduli spaces}\label{modulispaces}In this section there appear several results that are necessary to ensure the definition of $\fm$ makes sense. Much of the material here is simply commentary on how to adapt arguments of \cite[Sections~3-7]{L} to the current situation. For example, by the same arguments from \cite[Section~6]{L}, the moduli spaces $\mathcal{M}^A$ can be given coherent orientation systems using complete sets of paths, and the isomorphism class of $(\cP,\fm)(M,g,\sigma)$ is independent of this choice and the orientations of $\ba,\bb$.
\begin{subsection}{Index}
The substance of this section is Proposition~\ref{indexprop}, which follows from a close reading of \cite[Section~4]{L}.
\begin{df}For a positive element $A\in\pi_2(\arx,\y)$ with holomorphic representative $u\co(S,j)\to(W_{n+1},J)$, let $\ind(A)$ denote the index of the $D\overline{\partial}$ operator at $u$.\end{df}
\begin{df}Let $\bar{S}$ be a compact, smooth surface of genus $g$ with $b$ boundary components, and let $S$ be the result of removing $m$ points from the boundary and $n$ points from the interior. then $\chi(S)=2-2g-b-n-m/2$.\end{df}
\begin{prop}\label{indexprop}For any positive element $u\co S\to W_{n+1}$ in $\pi_2(\arx,\y)$ satisfying (\hyperlink{m0}{{\bf M0}})--(\hyperlink{m6}{{\bf M6}}) and $n\geq1$, let $\bar{S}$ be the closure of $S$ and let $e(D)$ denote the Euler measure of the domain of $u$. Then 
\begin{equation}\label{eq:indexn}\ind(A)=\frac{3-n}{2}k-\chi(\bar{S})+2e(D).\end{equation}\end{prop}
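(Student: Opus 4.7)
The plan is to mirror Lipshitz's argument for strips in \cite[Section~4]{L} and adapt it to the $(n+1)$-gon case. First I would set up $D\bar{\partial}$ at the holomorphic map $u$ as a Fredholm operator acting on sections of $u^*TW_{n+1}$ with Lagrangian boundary conditions along the cylinders $C_i^{\ell/(n+1)}$ and with the exponential-decay asymptotics at the punctures corresponding to the fixed generators $\arx,\y$, as in \cite[Proposition~5.8]{BEHWZ}.

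Next I would exploit the $2$-plane distribution $\xi$ from admissibility condition (\hyperlink{J5}{{\bf J5}}) to decompose $u^*TW_{n+1}\cong u^*\xi\oplus u^*\xi^{\perp}$ on a neighborhood of $\partial S$ and of the punctures. Because $\xi$ is tangent to $\Sigma$ along $C_i^{\ell/(n+1)}\cap\overline{W_{n+1}}$ and near each $\overline{B^{\ell/(n+1)}}$, this splitting is compatible with the Lagrangian boundary and asymptotic conditions, and it induces a splitting of $D\bar\partial$ as a sum of two Cauchy--Riemann-type operators: a ``$\Sigma$-component'' whose boundary conditions are tangent-to-$\Sigma$ Lagrangian arcs along the circles $\gamma_i^{\ell/(n+1)}$, and a ``$\Delta_{n+1}$-component'' whose boundary conditions are the edges $e^{\ell/(n+1)}$ of $\partial\Delta_{n+1}$ (using that $\pi_{\Delta_{n+1}}$ is holomorphic by (\hyperlink{J4}{{\bf J4}})).

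I would then compute the two indices separately. For the $\Sigma$-component, the same doubling-and-Riemann--Roch argument as in \cite[Section~4]{L} yields a contribution
\[ 2e(D)-\chi(\bar S)+\tfrac{(n+1)k}{2}, \]
where the final term tallies the $(n+1)k$ boundary punctures (one for each entry of each asymptotic $k$-tuple) and arises from matching Lagrangian lines across the doubled surface. For the $\Delta_{n+1}$-component, $\pi_{\Delta_{n+1}}\circ u$ is a branched cover of $\Delta_{n+1}$ by (\hyperlink{J4}{{\bf J4}}), and a direct Maslov-index calculation on the $(n+1)$-punctured disk with its $n+1$ Lagrangian edges (each corner being a transverse intersection) contributes
\[ \tfrac{3-n}{2}k-\tfrac{(n+1)k}{2}. \]
Adding the two contributions cancels the $\tfrac{(n+1)k}{2}$ terms and produces exactly $\tfrac{3-n}{2}k-\chi(\bar S)+2e(D)$. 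I would sanity-check the formula in two known cases: for $n=1$ it recovers Lipshitz's strip formula $\ind(A)=k-\chi(\bar S)+2e(D)$, and for $n=2$ it reproduces the standard Heegaard--Floer triangle index $\ind(A)=\tfrac{k}{2}-\chi(\bar S)+2e(D)$.

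The main obstacle is the Maslov computation for the $\Delta_{n+1}$-component: one must carefully track the rotation of the Lagrangian-line loop over $\partial S$ as it winds through all $(n+1)$ corners of $\Delta_{n+1}$ (the contributions change sign once $n>3$, so the bookkeeping has to be precise), and one must verify that the splitting of $D\bar\partial$ induced by $\xi$ is genuinely Fredholm-additive despite $\xi$ being defined only on $\overline{W_{n+1}}$ rather than on all of $W_{n+1}$. Both issues are handled by Lipshitz's cut-and-paste argument in \cite[Sections~4.1--4.2]{L}, applied separately on a neighborhood of each cylindrical end and then glued via the excision principle for Fredholm indices.
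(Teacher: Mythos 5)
Your plan takes a genuinely different route from the paper. The paper's proof directly adapts Lipshitz's quadrupling construction: it strings each curve's boundary arcs $a_i^{1/(n+1)},\ldots,a_i^{n/(n+1)}$ into a single ``$a_i$-sequence,'' builds $4\ltimes\bar S$ exactly as in \cite[Section~4]{L}, computes $\chi(4\ltimes\bar S)=4\chi(\bar S)-4k$, and then corrects the first Chern class of the pulled-back bundle by $-(n-1)k$ to account for the $n-1$ interior punctures along each $a_i$-sequence that the quadrupling construction neglects. That $-(n-1)k$ correction is exactly where the $\frac{3-n}{2}k$ ultimately comes from. You instead propose to split $D\bar\partial$ into a ``$\Sigma$-component'' and a ``$\Delta_{n+1}$-component'' using the distribution $\xi$ from (\hyperlink{J5}{{\bf J5}}) (or, more naturally, the $J$-invariant splitting $\ker d\pi_{\Delta_{n+1}}\oplus\ker d\pi_\Sigma$ supplied by (\hyperlink{J4}{{\bf J4}})) and sum the two Fredholm indices. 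This is the route the paper only gestures at when it remarks that the formula agrees with Riemann--Hurwitz applied to Sarkar's $\Sym^k(\Sigma)$ polygon formula; it is not the route actually used.

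The problem is that the two component indices are asserted, not computed, and the numbers you name look reverse-engineered. You claim the $\Sigma$-component contributes $2e(D)-\chi(\bar S)+\frac{(n+1)k}{2}$ ``from the same doubling-and-Riemann--Roch argument as in \cite[Section~4]{L},'' but Lipshitz's doubling argument for strips produces the \emph{full} index, not a $\Sigma$-component of it; you would need to rework his Lemma~4.1 with the vertical bundle only and account for the puncture asymptotics, none of which is done. Likewise, ``a direct Maslov-index calculation contributes $\frac{3-n}{2}k-\frac{(n+1)k}{2}$'' is not a calculation: for a $k$-fold branched cover of $\Delta_{n+1}$ the index of the horizontal piece should be computable from Riemann--Roch for a line bundle on $S$ with a boundary Maslov index and puncture corrections, and you should derive it (e.g.\ from $\chi(\bar S)$ and the number of branch points via Riemann--Hurwitz) rather than stating the value that makes the $\frac{(n+1)k}{2}$ terms cancel. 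Finally, you flag but do not address whether $D\bar\partial$ is actually Fredholm-additive with respect to this splitting. The splitting of $TW_{n+1}$ provided by (\hyperlink{J4}{{\bf J4}}) is $J$-invariant, so the \emph{symbol} splits, but the zeroth-order terms of the linearization need not; one needs an argument (homotopy of Fredholm operators) to conclude that the indices add. Your checks at $n=1,2$ are correct and the final formula is right, but the two middle calculations are the whole content of the proposition and are currently missing. I'd either supply those two index computations in full, or fall back to the paper's approach of directly generalizing the $4\ltimes S$ construction, which avoids having to justify the splitting of the operator.
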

\begin{proof}
Here follows an adaptation of the proof of \cite[Equation 6]{L}, suitably generalized to allow $n>1$. Let $a_i^{\ell/(n+1)}$ be the boundary arc of $S$ that maps to $C_i^{\ell/(n+1)}$ for $1\leq\ell\leq n$, and let $b_i$ be the boundary arc that maps to $C_i^{(n+1)/(n+1)}$ under the map $u\co S\to W_{n+1}$. Going around the boundary circles of $S$, the boundary conditions on $u$ and the labeling of $\Delta_{n+1}$ imply that, for all $1\leq i\leq k$ and $1\leq\ell\leq n-1$, any boundary arc $a_i^{\ell/(n+1)}$ is followed by $a_j^{(\ell+1)/(n+1)}$ for some $j$ that depends on $u$. Similarly, any $a_i^{n/(n+1)}$ is followed by some $b_j$, and any $b_i$ is followed by some $a_j^{1/(n+1)}$. For this reason, the boundary arcs $a_i^{\ell/(n+1)}$ appear in sequence with $\ell$ increasing from 1 to $n$, followed by a $b_i$, so that each sequence (call it an $a_i$-sequence, of which there are $k$) can be interpreted like a single $a_i$ arc from the construction of $4\ltimes S$ in \cite{L}, and each individual $b_i$ can be interpreted like a $b_i$ from the same construction. The Euler characteristic calculation in our case also gives $\chi(4\ltimes S)=4\chi(\bar{S})-4k$: the first step is to glue two copies of $\bar{S}$ along the $k$ $a_i$-sequences ($\chi=2\chi(\bar{S})-k$), then double the result along its $2k$ boundary circles (each formed by a pair of $b_i$ arcs, doubling the Euler characteristic), then puncture at the $2k$ endpoints of the $b_i$ arcs in the resulting closed surface. Note this construction lacks punctures at the interior points of where the entries of each $a_i$-sequence meet. In our case there is also a quadruple operator $4\ltimes D\overline{\partial}$ at this curve whose index at the curve $4\ltimes\bar{S}$ is 
\begin{equation}\label{eq:index1}-\chi(4\ltimes\bar{S})+2c_1(A),\end{equation}
where $c_1(\bar{A})$ is defined as the pairing of the first Chern class of $\overline{u^*TW_{n+1}}$ with the fundamental class of $4\ltimes\bar{S}$ (this analogy works, even though the cylinders $C_i^\ell,C_j^\ell$ are not necessarily disjoint, because the calculations here and in \cite[Section 5]{Bo} are done in the pullback bundle and $u$ is an embedding).

In the same vein, $c_1(A)$ can be interpreted as a sum of Maslov type indices along the boundary arcs of $\bar{S}$,
\[c_1(A)=2\left(\sum\limits_{i=1}^{k}\sum\limits_{\ell=1}^{n}\mu\left(a_i^{\ell/(n+1)}\right)-\sum\limits_{i=1}^{k}\mu(b_i)\right)-(n-1)k,\]
Where the last $-(n-1)k$ term comes from the punctures at the corners of $\Delta_{n+1}$ corresponding to those that were neglected before, and the sum of Maslov indices can be recast as twice the Euler measure of the domain of $u$. Thus,
\[\begin{array}{rcl}
\ind(D\overline{\partial})(A)&=&\frac{1}{4}\left(-4\chi(\bar{S})+4k+2c_1(A)\right)\vspace{0.5em}\\&&
k-\chi(\bar{S})+\frac{1}{4}\cdot2\left(2\left(\sum\limits_{i=1}^{k}\sum\limits_{\ell=1}^{n}\mu\left(a_i^{\ell/(n+1)}\right)-\sum\limits_{i=1}^{k}\mu(b_i)\right)-(n-1)k\right)\vspace{0.5em}\\&&
k-\chi(\bar{S})+2e(D)-\frac{n-1}{2}k,\end{array}\]
which is the required formula.\end{proof}
Note this index formula agrees with \cite[Equation~6]{L} for $k=g$, $n=1$ and Lipshitz's index formula for triangles when $n=2$, and it also agrees with what would result from applying the Riemann-Hurwitz formula to the index formula that appears in \cite{S} for the $(n+1)$-gon in $\Sym^k(\Sigma)$ corresponding to $u$, using the fact that the algebraic intersection number $\iota$ with the \emph{fat diagonal} is precisely the number of order 2 branch points of $\pi_{\Delta_{n+1}}(u)$ (suitably perturbed). For details on this so-called \emph{tautological correspondence} between curves in $W_{n+1}$ and polygons in $\Sym^k(\Sigma)$, see \cite[Sections 4.3 or 13]{L} or the discussion preceding \cite[Theorem~5.2]{S}. However, the immersed tori in $\Sym^k(\Sigma)$ corresponding to the $k$-tuples $\Gamma^{\ell/(n+1)}$ have isolated points at which they are not totally real, so it is not clear that $D\overline{\partial}$ is Fredholm for generic perturbations of $\Sym^k(j_\Sigma)$. Note also that Sarkar's formula, as applied to a curve $u\co S\to W_{n+1}$, only depends on the homology class of $u$. The only part of Formula~\ref{eq:indexn} that does not clearly depend only on the homology class of $u$ is $\chi(\bar{S})$; this brings up two natural questions.
\begin{quest}\label{indq}For an embedding $u\co S\to W_{n+1}$, is $\chi(\bar{S})$ determined by the homology class of $u$?\end{quest}
\begin{quest}Is there a symmetric reformulation of the invariants defined in this paper? In other words, does the tautological correspondence lead to an equivalent theory of polygons in $\Sym^k(\Sigma)$?\end{quest}
One way to address Question~\ref{indq} is to determine whether there is an adaptation of the constructions that appear in \cite[Section~4.2]{L} (and also in the proof of \cite[Lemma~4.1$'$]{L2}) or to adapt \cite[Proposition~10.9]{L} to our maps. Perhaps one weak piece of evidence for an affirmative answer is the effect on the index of adding $[\Sigma]$ to the homology class of $u$:
\begin{prop}\label{addsig}For a map $u\co S\to W_{n+1}$ satisfying (\hyperlink{m0}{{\bf M0}})--(\hyperlink{m6}{{\bf M6}}) and $n\geq1$, choose $m$ generic points $p1,\ldots,p_m$ in the interior of $\Delta_{n+1}$ and resolve the $mk$ intersections between $u(S)$ and $\Sigma\times\{p_1,\ldots,p_m\}$, calling the resulting map $u'$. Then \begin{equation}\label{eq:addsigma}\ind(u')=\ind(u)+2m(1-g+k),\end{equation} where $g$ is the genus of $\Sigma$.\end{prop}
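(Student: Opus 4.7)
The plan is to trace how both the Euler measure of the domain and the Euler characteristic of the source surface change under the operation of adding $m$ fibers and resolving the resulting intersections, then substitute into Formula~(\ref{eq:indexn}).

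First I would argue that the domain of $u'$ equals $D+m[\Sigma]$: since $u'$ differs from $u$ by a surgery along the $mk$ intersection points with $\Sigma\times\{p_1,\dots,p_m\}$, its projection to $\Sigma$ picks up each region of $\Sigma\setminus(\bigcup_\ell\Gamma^{\ell/(n+1)})$ with an extra multiplicity of $m$. Since the Euler measure is additive on domains and $e([\Sigma])=\chi(\Sigma)=2-2g$, we get
\[
e(D')=e(D)+m(2-2g).
\]

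Next I would control the source. A generic point $p_i$ in the interior of $\Delta_{n+1}$ is a regular value of $\pi_{\Delta_{n+1}}\circ u$, and by the asymptotic behavior (\hyperlink{m4}{{\bf M4}}) at each of the $n+1$ corners, the projection $\pi_{\Delta_{n+1}}\circ u\co S\to\Delta_{n+1}$ has degree $k$ (each corner contributes a $k$-fold cluster of branches limiting to a $k$-tuple generator). Consequently $u(S)$ meets $\Sigma\times\{p_i\}$ transversely in exactly $k$ points, for a total of $mk$ intersection points, each arising from two holomorphic sheets meeting transversely. Resolving each such point replaces a local model of two transverse disks ($\chi=2$) by an annulus ($\chi=0$), decreasing $\chi$ by $2$. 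Therefore
\[
\chi(\bar S')=\chi(\bar S)+m\chi(\Sigma)-2mk=\chi(\bar S)+m(2-2g)-2mk.
\]

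Finally I would plug these two identities into Proposition~\ref{indexprop}:
\begin{align*}
\ind(u')&=\tfrac{3-n}{2}k-\chi(\bar S')+2e(D')\\
&=\tfrac{3-n}{2}k-\chi(\bar S)-m(2-2g)+2mk+2e(D)+2m(2-2g)\\
&=\ind(u)+m(2-2g)+2mk=\ind(u)+2m(1-g+k),
\end{align*}
which is (\ref{eq:addsigma}).

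The only genuinely delicate point is the intersection count, namely verifying that $\pi_{\Delta_{n+1}}\circ u$ has degree exactly $k$; everything else is a bookkeeping computation. This follows from (\hyperlink{m4}{{\bf M4}}) together with the exponential convergence of $u$ at each puncture to a $k$-tuple of strips, as in \cite[Proposition~5.8]{BEHWZ}, so I would invoke that convergence result rather than reprove it. With the degree pinned down, the Euler-characteristic accounting is routine and the arithmetic above closes the argument.
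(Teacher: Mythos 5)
Your proof is correct and follows essentially the same route as the paper: apply Formula~(\ref{eq:indexn}) to $u'$ after computing how $\chi(\bar S)$ and $e(D)$ change under the surgery. The paper's proof is terser — it simply substitutes $\chi(\bar S)+m(2-2g)-2mk$ and $D+m[\Sigma]$ into the index formula and invokes additivity of the Euler measure, leaving the degree-$k$ count of intersection points and the local Euler-characteristic bookkeeping implicit — whereas you spell out both steps (the exponential convergence at the punctures forcing $\deg(\pi_{\Delta_{n+1}}\circ u)=k$, and the $-2$ per resolved transverse point). That extra detail is a welcome expansion of the argument rather than a departure from it.
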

\begin{proof}According to Proposition~\ref{indexprop},
\[\ind(u')=\frac{3-n}{2}k-(\chi(\bar{S})+m(2-2g)-2mk)+2e(D+m[\Sigma]).\]
Then $e(m[\Sigma])=m(2-2g)$, and additivity of the Euler measure yields the result.\end{proof}
Because of the additive nature of Formula~\ref{eq:addsigma}, the index can be extended to non-positive domains.
\begin{df}\label{inddef}For $u\in\pi_2(\arx,\y)$ satisfying (\hyperlink{m0}{{\bf M0}})--(\hyperlink{m6}{{\bf M6}}) and $n\geq1$, let $n$ be the least coefficient of the domain of $u$, and let $m=\max\{0,-n\}$. Let $u'$ be the map obtained by adding $m$ copies of $\Sigma$ as in Proposition~\ref{addsig}, and define 
\[\ind(u)=\ind(u')-2n(1-g+k).\]
\end{df}
\begin{df}Let $\cM_i(\arx,\y)$ denote the moduli space of index $i$ elements of $\cM(\arx,\y)$. In this notation, $\widehat{\cM}_i(\x,\y)=\cM_{i+1}(\x,\y)/\R$.\end{df}\end{subsection}
\begin{subsection}{Compactness}
The existence of generic admissible $J$ that achieves transversality for a given $A\in\pi_2(\arx,\y)$ satisfying (\hyperlink{m0}{{\bf M0}})--(\hyperlink{m6}{{\bf M6}}) follows essentially word-for-word from the arguments of \cite[Section~3]{L}, as do the necessary gluing results from \cite[Appendix A]{L}. The terms \emph{concatenation} and \emph{height two holomorphic building} below are defined in a way that makes them completely analogous to the terms as they appear in \cite{L}.
\begin{df}\label{concatdef}For integers $n_1\geq0$ and $n_2>0$, consider a pair \[(u,v)\in\pi_2(\arx_1,\y_1)\times\pi_2(\arx_2,\y_2),\] where $\arx_1=\left(\x_1^{1/(n_1+1)},\ldots,\x_1^{n_1/(n_1+1)}\right)$, $\arx_2=\left(\x_2^{1/(n_2+1)},\ldots,\x_1^{n_2/(n_2+1)}\right)$ and the representative of $\y_1$ coincides with that of $\x=\x_2^{i/(n_2+1)}$ for some $i$ (if $n_1=0$ then omit $\arx_1$). We assume the main perturbations chosen for $u$ and $v$ are such that $\Gamma^\frac{1}{n_1+1}=\Gamma^\frac{i}{n_2+1}$ and $\Gamma^\frac{n_1+1}{n_1+1}=\Gamma^\frac{i+1}{n_2+1}$, and such that the sequence of circles 
\[\left(\Gamma^\frac{1}{n_2+1},\ldots,\Gamma^\frac{i-1}{n_2+1},\Gamma^\frac{1}{n_1+1},\ldots,\Gamma^\frac{n_1+1}{n_1+1},\Gamma^\frac{i+1}{n_2+1},\ldots,\Gamma^\frac{n_2+1}{n_2+1}\right)\]
comes from a main perturbation. Then the \emph{concatenation} $u\ast_\x v$ of $u$ and $v$ is a representative of the element $[u]+[v]\in\pi_2(\arz,\y_2)$ given by gluing representatives of $u,v$ at their common ends $\y_1,\x_2^{i/(n_2+1)}$, so that 
\[\arz=\left(\x_2^{1/(n_2+1)},\ldots,\x_2^{(i-1)/(n_2+1)},\arx_1,\x_2^{(i+1)/(n_2+1)},\ldots,\x_2^{n_2/(n_2+1)},\y_2\right).\]\end{df}
\begin{df}For integers $n_1\geq0$ and $n_2>0$ as in Definition~\ref{concatdef}, a \emph{two-story holomorphic building in the homotopy class $A_1+A_2$} is a pair \[(u,v)\in\cM_0^{A_1}(\arx_1,\y_1)\times\cM_0^{A_2}(\arx_2,\y_2)\] suitable for concatenation. If $n_i=1$, use the moduli space $\widehat{\cM}_0^{A_i}(\x_i,\y_i)$.\end{df}
\begin{thm}\label{compactness}\label{cptthm}Assume $J$ is a generic, admissible almost complex structure on $W_{n+1}$ that achieves transversality for a given $A\in\pi_2(\arx,\y)$ satisfying (\hyperlink{m0}{{\bf M0}})--(\hyperlink{m6}{{\bf M6}}). Then:\begin{enumerate}
\item The moduli spaces $\cM^A(\arx,\y)$ are smooth manifolds of dimension $\ind(A)$, for $\ind(A)\leq2$. The moduli space $\cM(\y)$ consists of finitely many points. 
\item The moduli spaces $\cM_1^A(\arx,\y)$ and $\widehat{\cM}_1^A(\x,\y)$ have compactifications whose boundary points are height two holomorphic buildings in the homotopy class $A$.\end{enumerate}\end{thm}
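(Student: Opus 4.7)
The overall plan is to establish the two parts as consequences of (a) a transversality result for generic admissible $J$, and (b) the SFT compactness theorem of \cite{BEHWZ}, with the polygon-specific input coming from Proposition~\ref{indexprop} and the admissibility conditions (\hyperlink{J1}{\bf J1})--(\hyperlink{J5}{\bf J5}). Essentially every step has an analogue in \cite[Sections 3, 5, 7 and Appendix A]{L}, and the point is to verify that the analogues carry over when $\Delta_2$ is replaced by $\Delta_{n+1}$.

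First I would address part (1). The linearized Cauchy--Riemann operator $D\overline{\partial}$ at a solution $u\co S\to W_{n+1}$ satisfying (\hyperlink{m0}{\bf M0})--(\hyperlink{m6}{\bf M6}) fits into the same Banach manifold/section setup as in \cite[Section 3]{L}, because (\hyperlink{J2}{\bf J2}) confines the locus where $J$ is split to the cylindrical neighborhoods $U_i$ and (\hyperlink{J4}{\bf J4})--(\hyperlink{J5}{\bf J5}) provide a two-plane distribution on which to perturb $J$ while keeping $\pi_{\Delta_{n+1}}$ holomorphic. Since (\hyperlink{m6}{\bf M6}) forces $u$ to be an embedding and (\hyperlink{m2}{\bf M2}) excludes components mapping to a point in $\Delta_{n+1}$, one obtains surjectivity of $D\overline{\partial}$ at every element of $\cM^A$ for generic admissible $J$, exactly as in \cite[Proposition 3.8]{L}. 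The implicit function theorem then gives a smooth manifold whose dimension is the Fredholm index, and Proposition~\ref{indexprop} identifies that index with $\ind(A)$. For $\cM(\y)$ the same transversality analysis applies on $W^{\ell/(n+1)}_1$, and finiteness in the zero-dimensional case follows from the compactness discussion below applied to an a priori compact moduli space (no translation symmetry to mod out against bubbling for $n>1$; for $n=1$ after $\R$-quotient the same argument holds).

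For part (2) I would apply the SFT compactness theorem \cite[Theorem 10.1]{BEHWZ} to a sequence $u_n\in\cM_1^A(\arx,\y)$. The limit is a holomorphic building whose pieces are finite-energy curves in $W_{n+1}$ or in one of the cylindrical symplectizations at the ends $v^{\ell/(n+1)}$. I then need to rule out: (i) sphere or disk bubbles in $\Sigma\times\Delta_{n+1}$, (ii) closed components in a symplectization level, (iii) source-node degeneration, and (iv) simultaneous breaking at more than one end. Item (i) is excluded by (\hyperlink{J5}{\bf J5}) together with (\hyperlink{J2}{\bf J2}), which force any such bubble to project either to a point in $\Sigma$ or to a holomorphic curve in $\Delta_{n+1}$ with prescribed boundary--in both cases the index/area bookkeeping of \cite[Section 7]{L} rules it out for generic $J$. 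Item (ii) is blocked by (\hyperlink{m2}{\bf M2}) together with the fact that any closed component would carry positive index contribution that index additivity does not permit given $\ind(A)=1$. Item (iii) is generic codimension two in the presence of (\hyperlink{m6}{\bf M6}) and surjectivity of $D\overline{\partial}$, so it does not appear in the one-dimensional stratum. Item (iv) is ruled out by additivity of the index: two simultaneous breakings would force a piece with negative index, contradicting transversality. What remains is the breaking of $u_n$ at a single cylindrical end into two pieces whose indices sum to $1$; after modding out by $\R$-translation on the end that is broken, both factors become rigid and the pair is a height-two holomorphic building in the sense of Definition~\ref{concatdef}. Conversely, the gluing results of \cite[Appendix A]{L} adapt verbatim to show every such building is a limit, giving the compactification with the claimed boundary.

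The hardest part will be ruling out the so-called \emph{annoying} boundary components permitted by using (\hyperlink{J5}{\bf J5}) instead of Lipshitz's (\textbf{J5}): these are components contained in $\partial\overline{W_{n+1}}$ that are holomorphic with respect to the weaker distribution $\xi$ but carry no Fredholm index contribution in the usual sense. In the bigon case \cite{L} disposes of them by a separate dimension argument on the symplectization; for $(n+1)$-gons one has $(n+1)k$ cylindrical corners and must check that at no generic limit can an entire boundary arc degenerate into $\partial\overline{W_{n+1}}$. I would handle this by refining the choice of $\xi$ so that the area form $\omega|_\xi$ gives every annoying component strictly positive area bounded away from zero by a uniform constant depending only on the combinatorics of $\Gamma^{\ell/(n+1)}$, and then invoking the energy bound $E(u_n)=\omega(A)$ to conclude that only finitely many such components can appear in any limit; standard codimension counting then pushes them out of the one-dimensional stratum.
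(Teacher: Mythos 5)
Your treatment of part (1), the transversality setup, and the exclusion of sphere bubbles, Deligne--Mumford degenerations, and simultaneous multi-end breaking is essentially in line with the paper. However, your plan for the annoying boundary components is not merely different from the paper's --- it contradicts the paper's central structural point, and if it worked, the theorem itself would have to be stated differently. You propose to refine $\xi$ so that every annoying component carries a uniform positive lower area bound and then to ``push them out of the one-dimensional stratum'' by a codimension argument. The paper, however, argues that these degenerations are \emph{unavoidable}: the inclusion $\Z\langle\gamma_1\rangle\oplus\cdots\oplus\Z\langle\gamma_k\rangle\to H_1(\Sigma)$ can have nontrivial kernel, so there exist nontrivial thin periodic domains supported on a single edge, and limits in which a boundary arc degenerates into $\partial W_{n+1}$ genuinely occur. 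This is the reason the paper adopts the weaker condition (\hyperlink{J5}{\textbf{J5}}) in the first place (see the remark immediately after that condition is introduced), and it is the geometric source of the nonzero $\fm_0$ --- i.e.\ of the ``weak'' in ``weak $A_\infty$ algebra.'' If one could generically exclude annoying components the way you suggest, $\fm_0$ would vanish and the whole curved/weak framework would be unnecessary.

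What the paper does instead is \emph{incorporate} these degenerations into the boundary of the compactification. The breaking you must allow, but do not address, is level splitting along an arc $c\subset\Delta_{n+1}$ whose \emph{both} endpoints lie on the same edge $e^{\ell/(n+1)}$ (the case $\ell=\ell'$ in the paper's notation). Inserting a neck at $\Sigma\times c$ then produces a height-two building whose second story is a rigid $1$-gon in $W^{\ell/(n+1)}_1$ containing the annoying component --- exactly an element of $\cM(\y)$, accounted for by the $n_1=0$ case of Definition~\ref{concatdef}. You should replace the last paragraph of your argument with an analysis of these same-edge slits: show that the offending component must lie over a boundary arc, that (\hyperlink{m3}{\textbf{M3}}) is then violated for the naive limit, and that the correct compactified limit is the $1$-gon concatenation just described, with a small cylinder perturbation bridging the two stories that corresponds to a trivial cobordism of moduli spaces. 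Without this, your proof proves something stronger than (and incompatible with) what the theorem actually asserts.
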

\begin{proof}Assertion (1) is a basic consequence of transversality. As for (2), it is conceivable for all the possible types of degenerations appearing in \cite{L} to occur (and slightly more): using the notation of Definition~\ref{concatdef}, there is an obvious version of the \emph{level splitting} that appears in \cite[Section~7]{L} for $n_1,n_2>0$, in which a $k$-tuple of Reeb chords forms between distinct sides of $\Delta_{n_1+n_2}$ (more than two sides is ruled out by the standard index argument), and also there are the possibilities of Deligne-Mumford type degenerations of $S$ (ruled out for index reasons, because such degenerations have codimension 2 in the space of all holomorphic curves), disk or sphere bubbling, and bubbling off copies of $\Sigma$. In $W_{n+1}$, disk bubbling is meant to describe the formation of a holomorphic disk in $W_{n+1}$ whose boundary lies in a single cylinder. This is impossible, since each cylinder comes from an embedded homologically essential circle in $\Sigma$. Sphere bubbling is ruled out because $\pi_2(W_{n+1})$ is trivial. Thus, verification of conditions other than (\hyperlink{m2}{{\bf M2}}) and (\hyperlink{m3}{{\bf M3}}) for limiting curves is just like in \cite[Proposition 7.1]{L}. We now describe boundary points that correspond to degenerations that do not satisfy (\hyperlink{m2}{{\bf M2}}) and (\hyperlink{m3}{{\bf M3}}).

\emph{Annoying curves} are components of $S$ mapped by $u$ to a fiber of $\pi_{\Delta_{n+1}}$. Because of our choice of (\hyperlink{j5}{{\bf J5}}), annoying curves on the interior of $W_{n+1}$ are not generic, so that if a sequence limits to a curve not satisfying (\hyperlink{m2}{{\bf M2}}), any offending component lies in $\partial W_{n+1}$, and has boundary in the cylinders there, leading to a violation of condition (\hyperlink{m3}{{\bf M3}}), where boundary arcs travel first along cylinders, then along the boundary of the annoying curve, then again along the same cylinders. The fact that the obvious inclusion map 
\[\Z\langle\gamma_1\rangle\oplus\cdots\oplus\Z\langle\gamma_k\rangle\to H_1(\Sigma)\]
can have nontrivial kernel means such curves are unavoidable. Just like in Heegaard-Floer theory, level splitting occurs when slits form in the domain of $u$ along elements of $\Gamma^\frac{\ell}{n+1}$ and $\Gamma^\frac{\ell'}{n+1}$ for some $\ell,\ell'$ and limit to elements of the same to form a concatenation of two domains. When $\ell=\ell'$, inserting an appropriate neck along $\Sigma\times c$ (where $c$ is a properly embedded arc in $\Delta_{n+1}$ with boundary in $e^{\ell/(n+1)}$) yields a bubbling off of a 1-gon containing an annoying curve from that side of $W_{n+1}$. This is the formation of a height two holomorphic building with $n_1=0$, except the cylinders where the two levels meet must be slightly perturbed to meet the definitions of $W_1$ and $W_{n+2}$; such a perturbation corresponds to an arbitrarily small perturbation of the almost complex structures of the two pieces near the relevant cylinders, which by transversality corresponds to a trivial cobordism of moduli spaces. 

With the assumption $\ell\neq\ell'$, the conditions (\hyperlink{m0}{{\bf M0}})--(\hyperlink{m6}{{\bf M6}}) are verified along the lines of \cite[Proposition 7.1]{L}, with no perturbation of cylinders necessary.
\end{proof}
\begin{ex}Here is an example of a degeneration due to Lipshitz which, along with a subsequent suggestion from Tim Perutz, was instrumental in the author's formulation of Theorem~\ref{thm}. It is important to note that neither Lipshitz nor Perutz have commented to the author about the validity of this paper's arguments or main result.

\begin{figure}[b]\capstart\begin{center}\includegraphics{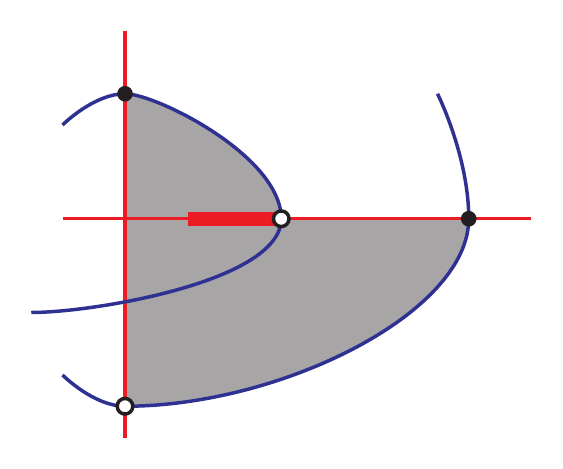}\end{center}\caption{A horizontal slit forms along $\alpha_1$ in the domain of $\widehat{\cM}_1(\x,\y)$, with one end approaching $\alpha_2$. \label{lex}}\end{figure} Figure~\ref{lex} describes a one-dimensional moduli space of holomorphic curves in $W_2$. At one end of this moduli space, there is a slit in the horizontal red arc, which we call $\alpha_1$, that approaches the intersection $\alpha_1\cap\alpha_2$. At the other end, the slit travels down along the blue arc toward $\alpha_2$. To those who know Heegaard Floer theory, the latter degeneration is a familiar instance of trajectory breaking; in our language it corresponds to level splitting according to an arc connecting the sides $e_1,e_2\subset \Delta_2$. The former, pictured, degeneration corresponds to level splitting according to an arc from $e_1$ to itself. The annoying curve in this instance is a constant pair of 1-gons whose domains are both $\alpha_1\cap\alpha_2$ before perturbation.\end{ex}
\end{subsection}
\begin{subsection}{Admissibility}
For arbitrary $(\arx,\y)$, in order to have a weak $A_\infty$ algebra with $\Z_2$ coefficients, it is necessary to guarantee that there are only finitely many classes in $\pi_2(\arx,\y)$ that contain holomorphic representatives. The property of being an \emph{admissible} surface diagram will suffice. The discussion is somewhat simplified compared to that of \cite{L,OS} because of the lack of a basepoint and the fact that the $k$-tuples $\Gamma^t$ are isotopic for varying $t$.
\begin{df}The surface diagram $(\Sigma,\Gamma)$ is \emph{admissible} if every nontrivial linear combination of regions $A\subset\Sigma$ such that $\partial A=\sum\limits_{i=1}^ka_i[\gamma_i]$ for some integers $a_i$ has positive and negative coefficients. We also require that $(\Sigma,\Gamma)$ comes from a map $f\co M\to S^2$ that is homotopic to a constant map, and $\Sigma$ has genus $g\geq3$.\end{df}
The Heegaard-Floer version of the above definition might be weak admissibility, modulo base point, for all Spin$^c$ structures. The requirements on $f$ and $g$ were explained in Section~\ref{basicdefs}: We require $f$ to be nullhomotopic because every smooth, closed oriented 4-manifold has a surface diagram in that distinguished homotopy class (\cite[Corollary~1]{W1} and $H_2(M)$ is fully expressed in any nonempty set of trajectories $\pi_2(\x,\y)$ (Proposition~\ref{h2prop}).
\begin{lemma}Any surface diagram is isotopic to one that is admissible.\end{lemma}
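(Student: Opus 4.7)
The proof adapts the standard winding argument for weak admissibility of Heegaard diagrams (\cite[Lemma~5.4]{OS}) to our setting, with the basepoint normalization replaced by working modulo $[\Sigma]$. Let $\Pi(\Sigma,\Gamma)$ denote the group of integer 2-chains $A=\sum_R n_R R$ on the regions of $\Sigma\setminus\Gamma$ whose boundary is a $\Z$-linear combination of the $\gamma_i$. Since any such boundary is nullhomologous in $\Sigma$, the boundary map identifies $\overline\Pi:=\Pi/\Z\langle[\Sigma]\rangle$ with $\Kr\iota_\Gamma$, so $\overline\Pi$ is finitely generated and free; fix a basis $P_1,\ldots,P_r$. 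Admissibility amounts to showing that for every nonzero class $P\in\overline\Pi$ the multiplicity function $m_P$ (well-defined up to an integer global shift) has range of length at least $2$, so that a suitable global shift exhibits both positive and negative coefficients.

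The plan is to construct an isotopy of $\Gamma$ built as a composition of finger moves supported in pairwise disjoint small disks $U_1,\ldots,U_r\subset\Sigma$ so that inside $U_j$ the range of $m_{P_j}$ grows by an arbitrarily large gap $G_j$. Admissibility for an arbitrary nonzero $Q=\sum c_jP_j$ then follows by a dominance argument: letting $j_0$ be the maximal index with $c_{j_0}\neq 0$ and choosing $G_{j_0}\gg\sum_{l<j_0}|c_l|(m_{P_l}^{\max}-m_{P_l}^{\min})$, the value of $m_Q$ in the two extreme thin regions of $U_{j_0}$ straddles a value that can be centered to zero by a global shift.

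At stage $j$, I would pick $\gamma_{i_j}\in\Gamma$ whose coefficient $a_{j,i_j}$ in $\partial P_j$ is nonzero (possible because $P_j$ is a nontrivial element of $\Kr\iota_\Gamma$), choose $U_j$ disjoint from $U_1,\ldots,U_{j-1}$ and containing a transverse intersection point of $\gamma_{i_j}$ with some $\gamma_{i'}$, and perform $N_j$ successive finger moves of $\gamma_{i_j}$ inside $U_j$ arranged in a zig-zag pattern. Each finger move introduces a pair of new intersections with $\gamma_{i'}$ and a new thin region in which $m_{P_j}$ differs from the neighbor by $\pm a_{j,i_j}$; iterated in a chain, the $n$-th thin pocket attains an $m_{P_j}$-value shifted by $n\,|a_{j,i_j}|$ from the base, giving a gap $G_j$ on the order of $N_j\,|a_{j,i_j}|$. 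Because the isotopy at stage $j$ is confined to $U_j$, for every $l\neq j$ the jump data defining $m_{P_l}$ away from $U_j$ is unchanged, and so $m_{P_l}$ on the previously fixed regions $R_l^\pm\subset U_l$ is altered at most by a global integer constant, preserving all earlier sign data.

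The main technical obstacle is verifying that the succession of finger moves truly produces an arithmetic progression of distinct multiplicities with nonzero step, rather than collapsing after one move. This is analogous to the multiplicities appearing in the \emph{winding regions} of \cite[Section~5]{OS} and is established by a careful local computation tracking the multiplicity jumps across each newly introduced arc of $\gamma_{i_j}$; the upshot is that, in a zig-zag of length $N_j$, the $n$-th thin pocket has $m_{P_j}$-value shifted from the base by $n\,|a_{j,i_j}|$, so the range grows linearly in $N_j$. After all $r$ stages of winding, the resulting isotoped surface diagram is admissible.
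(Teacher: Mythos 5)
Both you and the paper take the same overall route — adapt the winding argument of Ozsv\'ath–Szab\'o (\cite[Lemma~5.4]{OS}) — and you correctly identify the extra wrinkle that there is no basepoint normalization here, so one must work modulo $[\Sigma]$ in the lattice of periodic domains. The paper's own proof is essentially a three-sentence citation (set up the subspace $Q\cong\Kr\iota_\Gamma$, then ``the rest follows as in OS~5.4, winding along elements of the standard basis for $H_1(\Sigma)$''), so your more detailed unpacking is in principle welcome.

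However, the dominance step at the end of your argument has a quantifier-order problem and does not close the proof. The isotopy, and hence the gaps $G_j=G_j(N_j)$, must be chosen once and for all, \emph{before} the periodic domain $Q=\sum c_jP_j$ is handed to you; but the $c_j$ range over all of $\Z$, so there is no fixed choice of $N_j$ making $G_{j_0}\gg\sum_{l<j_0}|c_l|\left(m_{P_l}^{\max}-m_{P_l}^{\min}\right)$ hold for every $Q$. A second, related gap: you assert that for $l\neq j_0$ the profile of $m_{P_l}$ inside $U_{j_0}$ is bounded (``altered at most by a global integer constant''), but if $\partial P_l$ has nonzero coefficient on the circle $\gamma_{i_{j_0}}$ being wound, then $m_{P_l}$ also spreads linearly in $U_{j_0}$ with growth rate proportional to the same $N_{j_0}$, so its range cannot be dominated by making $N_{j_0}$ larger. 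Finally, ``centered to zero by a global shift'' appeals to the $[\Sigma]$ ambiguity, but shifting by $m[\Sigma]$ changes the periodic domain; the admissibility condition is on a specific chain, not an equivalence class, so this shift is not available as a free normalization.

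The fix, and the argument OS~5.4 actually runs, is qualitative rather than quantitative: for a nonzero periodic domain $Q$, injectivity of $\partial$ on $Q/\Z[\Sigma]$ (the paper's isomorphism $Q\cong\Kr\iota_\Gamma$) gives some $\gamma_i$ with nonzero coefficient $b_i$ in $\partial Q$; winding about a curve dual to $\gamma_i$ produces a winding region in which the multiplicities of $Q$ change in an arithmetic progression with nonzero step $b_i$ on both sides, forcing a sign change once the winding is long enough relative only to the pre-winding multiplicities of a fixed basis of $Q$. No uniform-in-$Q$ gap is needed, because the step size $b_i$ comes for free with each $Q$. You should replace the dominance argument with this, and be explicit about how the $[\Sigma]$ ambiguity is pinned down (this is a genuine subtlety in the paper's basepoint-free setting, and the proof should not rely on a shift that changes the domain itself).
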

\begin{proof}Let $\cD=\{D_1,\ldots,D_m\}$ denote the set of path components of $\Sigma\setminus\Gamma$. In the integer lattice generated by $\cD$, choose a basis $(q_1,\ldots,q_b)$ for the subspace $Q$ of elements whose boundary is a sum of elements of $\Gamma$, which is isomorphic to the kernel of an inclusion map
\[\Kr\left(\bigoplus\limits_{i=1}^k\Z\langle\gamma_i\rangle\to H_1(\Sigma)\right)\cong H_2(X),\]
so that a domain is uniquely specified by its boundary. The rest of the proof follows as in \cite[Lemma~5.4]{OS}, winding along elements of the standard basis for $H_1(\Sigma)$.\end{proof}
\begin{lemma}\label{finitesums}When $(\Sigma,\Gamma)$ is admissible, there are only finitely many positive elements of $\pi_2(\arx,\y)$.\end{lemma}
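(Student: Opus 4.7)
The plan follows the standard weak admissibility argument from Heegaard--Floer theory (compare \cite[Lemma~5.4]{OS} and the cylindrical analogue in \cite{L}): I will show that the positive elements of $\pi_2(\arx,\y)$ form an antichain in $\Z_{\geq 0}^m$ with respect to coefficient-wise comparison on the $m$ regions of $\Sigma\setminus\bigcup_\ell \Gamma^{\ell/(n+1)}$, and then apply Dickson's lemma (the assertion that $\Z_{\geq 0}^m$ is a well-quasi-order, hence has no infinite antichain).

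First I would observe, assuming $\pi_2(\arx,\y)$ is nonempty (otherwise the statement is vacuous) and fixing a reference $A_0\in\pi_2(\arx,\y)$, that every other $A\in\pi_2(\arx,\y)$ has $A-A_0$ periodic in the sense of Definition~\ref{vocab}: the asymptotic data at all $(n+1)k$ punctures cancel in the difference, so $\partial(A-A_0)$ is a $\Z$-linear combination of the perturbed circles $\gamma_i^{\ell/(n+1)}$. In particular, if two positive elements $A,B$ satisfied $A\geq B$ coefficient-wise on every region, then $P:=A-B$ would be a nonzero periodic domain with all non-negative coefficients. So it suffices to rule out such a $P$.

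Next I would descend to the unperturbed admissibility hypothesis. The regions of $\Sigma\setminus\bigcup_\ell\Gamma^{\ell/(n+1)}$ split into two types: small deformations of regions of $\Sigma\setminus\Gamma$, and thin strips either running alongside individual $\gamma_i$ or lying near the intersections $\gamma_i\cap\gamma_j$, all created by the perturbations themselves. Collapsing the thin strips by shrinking the perturbation parameters back to zero produces a non-negative two-chain $\bar P$ on $\Sigma\setminus\Gamma$ whose boundary is $\sum_i c_i[\gamma_i]$, where $c_i$ is the net boundary multiplicity of $P$ along all perturbations of $\gamma_i$. Admissibility of $(\Sigma,\Gamma)$ forces $\bar P=0$; combined with non-negativity this forces $P$ to be supported only on thin strips, and by Property~(2) of the main perturbation each such strip pairs with an adjacent strip of canceling boundary contribution, so non-negativity forces $P=0$.

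With the antichain property established, Dickson's lemma concludes the proof: any infinite sequence of positive elements would contain two coefficient-wise comparable entries, contradicting the antichain condition. The main technical hurdle is the descent step, since one must control how the perturbation geometry interacts with the periodic condition and ensure that the thin-strip analysis does not miss a stealth periodic component supported purely on the thin regions. This is precisely where both the explicit local models provided by Properties~(1) and~(2) of the main perturbation and the admissibility hypothesis on the original surface diagram are essential.
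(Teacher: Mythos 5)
Your argument is correct, and the core reduction you set up is the same one the paper uses: collapse the main perturbation to produce a domain for $(\Sigma,\Gamma)$ lying in $Q$, apply the admissibility hypothesis there, and then dispose of the residual thin-strip periodic domain via Property~(2) of the main perturbation. Where you genuinely diverge is the endgame. The paper finishes with the signed-area argument: it cites \cite[Lemma~4.12]{OS} for an area form on $\Sigma$ under which every element of $Q$ has zero signed area, fixes a reference positive class, and bounds the coefficients of the periodic difference below (by positivity) and then above (from the zero-area constraint), whence only finitely many integer solutions. You instead observe that positivity plus admissibility makes the set of positive classes an antichain in $\Z_{\geq 0}^m$ and invoke Dickson's lemma (well-quasi-order of $\N^m$). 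Your route is arguably cleaner in that it avoids constructing the auxiliary volume form entirely, using only the admissibility hypothesis and a piece of elementary order theory; the paper's route hews to the standard Heegaard--Floer template and yields explicit coefficient bounds. One caveat, which applies equally to both accounts: the thin-domain step is left at roughly the same level of precision. You assert that Property~(2) forces a nonnegative periodic domain supported on thin strips to vanish, and the paper asserts that Property~(2) ``guarantees the same kind of admissibility criterion''; neither spells this out, and making it fully rigorous means checking that the bigon regions created by the perturbation admit no nontrivial nonnegative periodic combination.
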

\begin{proof}For any pair of positive elements $A,B\in\pi_2(\arx,\y)$, the difference of their domains has boundary given by a sum of elements of the $k$-tuples $\Gamma^{\ell/(n+1)}$. Performing the obvious isotopy reversing the main perturbation yields an element $q\in Q$, which, if nontrivial, has positive and negative coefficients. Then the usual finiteness argument follows: In \cite[Lemma~4.12]{OS}, which does not make use of the index or first Chern class, there is a proof that there is a volume form on $\Sigma$ such that the signed area of every element of $Q$ is zero. This puts a lower bound on each of the coefficients of $A-B$. This and the condition that the signed area of $A-B$ is zero gives an upper bound. Now suppose $q$ is trivial, which means $A-B$ lies in the subset of $\Gamma$ swept out by the main perturbation (these so-called \emph{thin domains} appear repeatedly; see for example Definition~\ref{thin}). The definition of \emph{main perturbation}, specifically that each copy of $\gamma_i$ transversely intersects $\gamma_i$ at two points, guarantees the same kind of admissibility criterion, and resulting finiteness result, as for domains that yield nontrivial elements of $Q$.\end{proof}
\end{subsection}\end{section}

\begin{section}{Definition of the algebra}\label{algdef}
Fix an admissible surface diagram $(\Sigma,\Gamma)$ for $M$.
\begin{df}Recall the definition of $\cP$ from Section~\ref{spaces}. For an $n$-tuple of generators $\arx$ where $n>1$, define the map $\fm=\left(\fm_n\co\cP^{\otimes n}\to\cP\right)$,
\[\fm_n(\arx)=\sum\limits_\y\#\cM_0(\arx,\y)\cdot\y.\]
For $n=1$,
\[\fm_1(\x)=\sum\limits_\y\#\widehat{\cM}_0(\x,\y)\cdot\y.\]
For $n=0$, set $\fm_0(\x)=\#\cM(\x)$.\end{df}
\begin{lemma}The pair $(\cP,\fm)(M,g)$ is a weak $A_\infty$ algebra.\end{lemma}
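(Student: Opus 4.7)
The plan is the standard $A_\infty$-via-moduli-boundary argument: for fixed $n\geq 0$, a tuple of generators $(a_1,\ldots,a_n)$, and a fixed output generator $\y$, I would consider the disjoint union
\[
\bigsqcup_{\substack{A\in\pi_2(\arx,\y)\\ \ind(A)=1}} \cM_1^A(\arx,\y),
\]
using $\widehat\cM_1^A$ in the $n=1$ case and interpreting $n=0$ as the moduli spaces $\cM(\y)$ already appearing in the definition of $\fm_0$. By admissibility together with Lemma~\ref{finitesums}, the set of homotopy classes $A$ with nonempty moduli space is finite, so this disjoint union is a finite disjoint union. By Theorem~\ref{compactness}(1), each piece is a smooth $1$-manifold, and by Theorem~\ref{compactness}(2) it admits a compactification whose boundary points are height-two holomorphic buildings. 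Since every compact $1$-manifold has an even number of boundary points, the mod-$2$ count of these buildings, summed over all $A$, vanishes. The $A_\infty$ relation~(\ref{eq:ainftyrlns}) will drop out once one identifies this boundary count with the left-hand side of~(\ref{eq:ainftyrlns}) evaluated on $(a_1,\ldots,a_n)$ and paired with $\y$.

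The next step is the bookkeeping. A height-two building in class $A$ with total output $\y$ and total inputs $(a_1,\ldots,a_n)$ is, by Definition~\ref{concatdef}, a concatenation $u\ast_\x v$ with $u\in\cM_0^{A_1}(\arx_1,\y_1)$ and $v\in\cM_0^{A_2}(\arx_2,\y_2)$, where $\y_1$ matches one of the inputs $\x_2^{i/(n_2+1)}$ of $v$, and the concatenated sequence of inputs equals $(a_1,\ldots,a_n)$. Writing $j=n_1+1$ for the number of sides of the lower building meeting $\ba$-type arcs other than the shared edge (so $\cM_0$ contributes the coefficient of $\fm_j$) and $i=n_2$ for the upper, one has $i+j=n+1$, and the location $\ell$ of the shared input in the upper building is exactly the $\ell$ of the double sum in~(\ref{eq:ainftyrlns}). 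Summing $\#\cM_0^{A_1}\cdot\#\cM_0^{A_2}$ over all such splittings and over all $A_1+A_2=A$ reproduces the $\y$-coefficient of
\[
\sum_{i+j=n+1}\sum_{\ell=1}^{n-j+1}\fm_i\bigl(a_1,\ldots,a_{\ell-1},\fm_j(a_\ell,\ldots,a_{\ell+j-1}),a_{\ell+j},\ldots,a_n\bigr).
\]
The $j=0$ term $\fm_0\in\cP$ must arise from the one degeneration described in the proof of Theorem~\ref{compactness} where a neck forms along an arc in $\Delta_{n+1}$ with both endpoints on a single edge $e^{\ell/(n+1)}$, bubbling off a $1$-gon in $W_1^{\ell/(n+1)}$; the resulting building is precisely an element of $\cM(\x)\times\cM_0(\ldots,\x,\ldots;\y)$, which contributes the $j=0$ term with $\fm_0$ inserted in position $\ell$. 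Conversely, I would invoke the gluing results adapted from \cite[Appendix~A]{L} (as indicated before the statement of Theorem~\ref{compactness}) to see that every such building is the limit of a unique family in the interior, so the boundary of the compactified moduli space is precisely this set of buildings (counted with multiplicity one mod $2$).

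The main obstacle I expect is the careful treatment of the \emph{annoying curve} boundary strata, which do not occur in the cylindrical Heegaard--Floer setting of \cite{L} and which are the whole reason $\fm_0$ can be nonzero here. Concretely, I need to be sure that (a) every boundary point is accounted for by a two-story building of the form above, with no extra contributions from, say, a height-two building in which both levels contain annoying components matching nontrivially, and (b) the slight perturbation of cylinders needed to realize the $n_1=0$ neck-stretching as an honest concatenation into $W_1^{\ell/(n+1)}$ and $W_{n+2}$ produces only a trivial cobordism of zero-dimensional moduli spaces, so that the count is preserved. Both points are asserted in the proof of Theorem~\ref{compactness}, so the proof reduces to invoking that compactness result and the matching gluing statement, finiteness from admissibility, and then reading off the combinatorial identification with~(\ref{eq:ainftyrlns}). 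The special cases to keep track of are $n=0$ (where the $A_\infty$ relation reads $\fm_1(\fm_0)=0$ and follows from the same argument applied to $\cM_1(\y)$-type spaces whose boundary is a pair of $1$-gons) and $n=1$ (where one must use $\widehat\cM$ and note that breaking of an $\R$-invariant curve produces the usual $\fm_1\circ\fm_1$ term together with the $\fm_2(\fm_0,-)+\fm_2(-,\fm_0)$ terms).
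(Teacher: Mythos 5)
Your proof follows the same approach as the paper's, which is far terser (two sentences: "direct consequence of Theorem~\ref{cptthm} and admissibility... the $A_\infty$ relations are precisely a $\Z_2$ count of $\partial\cM_1(\arx,\y)$ or $\partial\widehat\cM_1(\x,\y)$"); you have merely filled in the details the paper leaves implicit. One small bookkeeping slip: the lower (inner) building $u\in\cM_0^{A_1}(\arx_1,\y_1)$ has $n_1$ inputs and so contributes $\fm_{n_1}$, i.e.\ $j=n_1$ rather than $j=n_1+1$; with $i=n_2$ this gives the correct $i+j=n_1+n_2=n+1$ that you in fact asserted.
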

\begin{proof}This is a direct consequence of Theorem~\ref{cptthm} and admissibility. The $A_\infty$ relations are precisely a $\Z_2$ count of $\partial\cM_1(\arx,\y)$ or $\partial\widehat{\cM}_1(\x,\y)$.\end{proof}
Note that the $H_1(M)$ partition of sectors discussed in Section~\ref{secth1m} takes the ordering of $\Gamma$ into account because $a$ comes from comparing a generator $\x$ with the reference generator $\x_0$, which itself is pinned down by the ordering of $\Gamma$. These moduli spaces are all orientable using a complete set of paths as in Heegaard Floer theory, so it seems there should be a choice of sign in Equation~\ref{eq:ainftyrlns} allowing the theory to have integer coefficients, though an accompanying grading on the entire set of generators will require further work.
\end{section}
\begin{section}{Isotopy}\label{isotopy}Here begins the proof that the algebra $(\cP,\fm)(M,g)$ is a diffeomorphism invariant of $M$.
Recall the notation $\ba,\bb$ from Definition~\ref{vocab}. Similar to \cite[Section~9]{L}, a basic isotopy of an element $c$ of $\ba,\bb$ or $\Gamma$ is, for some $i$, an isotopy $c_t\co S^1\times[0,1]\to\Sigma\times[0,1]$ with one of the following properties.\begin{itemize}
\item(Type 1)\hypertarget{t1}\ The isotopy can be realized by an ambient isotopy in $\Sigma$.
\item(Type 2)\hypertarget{t2}\ The isotopy introduces one pair of transverse intersections between $c_t$ and some circle other than itself by a finger move creating a pair of intersections between $c$ and some other circle.
\item(Type 3)\hypertarget{t3}\ The isotopy appears as in Figure~\ref{r3}.\end{itemize}
\begin{figure}\capstart\begin{center}\includegraphics{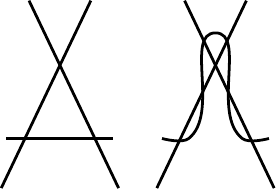}\end{center}\caption{A Type 3 isotopy of an element of $\Gamma$, $\ba$ or $\bb$.\label{r3}}\end{figure}
The Type 3 isotopy may appear strange; its particular form is chosen so that the second sentence of the proof of \cite[Lemma~9.2]{L} can be achieved.
\begin{df}An \emph{admissible isotopy} is a sequence of basic isotopies and their inverses such that the embellished diagram is admissible before and after each basic isotopy.\end{df}
\begin{lemma}\label{connect}Suppose $(\Sigma,\Gamma)$ and $(\Sigma,\Gamma')$ are isotopic admissible surface diagrams. Then there is an admissible isotopy sending $(\Sigma,\Gamma)$ to $(\Sigma,\Gamma')$.\end{lemma}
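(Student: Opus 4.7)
The plan is to proceed in two stages: first, decompose the given ambient isotopy into a finite sequence of basic isotopies without regard to admissibility; second, upgrade this sequence to one that preserves admissibility throughout by inserting winding moves.

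For the decomposition, I would apply standard general-position arguments to the one-parameter family $\Gamma_t$ of immersed $1$-submanifolds of $\Sigma$. After a small perturbation of the isotopy, the only non-generic moments in time are isolated and of one of the three basic types: a finger move creating or cancelling two intersections (Type \hyperlink{t2}{2}), a triple-point crossing as in Figure~\ref{r3} (Type \hyperlink{t3}{3}), or otherwise an ambient isotopy (Type \hyperlink{t1}{1}). Concatenating the elementary isotopies between these moments produces a finite sequence $\Gamma=\Gamma_0,\Gamma_1,\ldots,\Gamma_N=\Gamma'$ with consecutive terms related by basic isotopies, but with no control over admissibility of the intermediate $\Gamma_i$.

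For the admissibility upgrade, I would adapt the winding construction of [OS, Lemma~5.4] already used earlier in the paper to prove every surface diagram is isotopic to an admissible one. Fix a symplectic basis $\eta_1,\ldots,\eta_{2g}$ of $H_1(\Sigma)$ by embedded curves, chosen generically so as to be transverse to $\bigcup_t \Gamma_t$. Winding a fixed $\gamma_j \in \Gamma$ once along some $\eta_i$ is realized by a Type \hyperlink{t2}{2} finger move followed by a Type \hyperlink{t1}{1} isotopy; when applied to an already-admissible diagram, this preserves admissibility, since it introduces new bigon contributions of both signs into any periodic domain whose boundary involves $\gamma_j$. Before each basic isotopy $\Gamma_{i-1}\to\Gamma_i$ that would break admissibility, I would insert sufficiently many winding moves to build an ``admissibility buffer'' proportional to the winding number $N$, perform the offending basic isotopy, and then reverse the windings. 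The signed-area argument of [OS, Lemma~4.12] (the same one invoked in the proof of Lemma~\ref{finitesums}) shows that for $N$ large enough, no nontrivial periodic domain in any intermediate stage can be positive.

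The main obstacle is bookkeeping: one must choose a single winding parameter $N$ and a single family of basis curves $\eta_i$ that work uniformly across the entire sequence, and the winding must be undone so that the final diagram is exactly $(\Sigma,\Gamma')$ rather than a wound version of it. Both points follow from finiteness of the periodic-domain lattice $\pi_2(\x,\x)/\mathfrak{T}_{\alpha\beta}(\x,\x)\cong H_2(X;\Z)$ (which bounds the number of classes that could obstruct admissibility at any stage, hence the required $N$) together with openness of transversality for the basis curves against the compact family $\{\Gamma_t\}$. Once $N$ is fixed, each windings--basic isotopy--unwindings block is admissible by construction, and concatenating these blocks yields the desired admissible isotopy from $(\Sigma,\Gamma)$ to $(\Sigma,\Gamma')$.
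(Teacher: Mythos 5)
Your approach matches the paper's, which simply cites the analogous argument of Ozsv\'ath--Szab\'o (\cite[Proposition~7.2]{OS}) adapted to the basepoint-free surface-diagram setting; you are in effect writing out the content of that citation. The general-position decomposition into basic isotopies and the winding construction along a fixed basis of $H_1(\Sigma)$ transverse to the whole family $\{\Gamma_t\}$ are the right ingredients.

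However, there is a bookkeeping gap in where you insert the winding and unwinding. You propose, around each offending basic isotopy $\Gamma_{i-1}\to\Gamma_i$, to wind, perform the move, then unwind back to $\Gamma_i$. But the intermediate diagrams $\Gamma_i$ in your decomposition are generally \emph{not} admissible --- only the endpoints $\Gamma_0=\Gamma$ and $\Gamma_N=\Gamma'$ are assumed admissible. Since an admissible isotopy requires the diagram to be admissible before and after \emph{every} basic isotopy, the last unwinding step (from wound $\Gamma_i$ down to $\Gamma_i$) and the first winding step of the next block (from $\Gamma_i$ up to wound $\Gamma_i$) both fail the definition whenever $\Gamma_i$ is inadmissible. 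The standard fix, and the one implicit in \cite[Proposition~7.2]{OS}, is to wind up once from $\Gamma_0$ at the very beginning, carry out the entire sequence of basic isotopies in the wound position (where a single $N$ chosen large relative to a generating set of the periodic-domain lattice keeps everything admissible, using transversality of the $\eta_i$ to the whole family), and only unwind once at the very end down to $\Gamma_N=\Gamma'$. You already have all the pieces needed for this corrected schedule; it is only the interleaving that needs to change. Two smaller points: winding a circle once along $\eta_i$ generally introduces intersections with several other circles, so it is a sequence of Type~2 moves rather than one; and $H_2(X;\Z)$ is finitely \emph{generated}, not finite --- what you need is a finite generating set for the periodic-domain lattice, which you do have.
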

\begin{proof}The proof of \cite[Proposition~7.2]{OS} will suffice, performing appropriate isotopies of pairs $(\alpha_i,\beta_i)$ corresponding to $\Gamma_i$, neglecting all the trouble coming from the need to avoid a base point.\end{proof}
\begin{prop}\label{isotlem}Suppose $(\Sigma,\Gamma)$ and $(\Sigma,\Gamma')$ are two diagrams that differ by an admissible isotopy. Then the respective algebras $(\cP,\fm)(M,g)$ and $(\cP',\fm')(M,g)$ are homotopy equivalent.\end{prop}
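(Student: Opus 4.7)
The plan is to mimic the continuation-morphism strategy of \cite[Section~9]{L}, adapted to the $A_\infty$ setting. First I would observe that by composing homotopy equivalences it suffices to treat a single basic isotopy; a Type 1 isotopy is then handled tautologically by the induced ambient diffeomorphism of $\Sigma$, since such a diffeomorphism takes the data defining $(\cP,\fm)$ to that defining $(\cP',\fm')$. The real content therefore lies in Types 2 and 3.

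For those cases I would enlarge the main perturbation of $\Gamma$ so that the one-parameter family $\Gamma^t$, $t \in [0,1]$, consists of a block of small generic slices before the isotopy, then the chosen basic isotopy, then another block of small generic slices after. Using the corresponding cylinders $C_i^{t}$ in $W_{n+1}$, I would define a morphism $\ff \co (\cP,\fm) \to (\cP',\fm')$ by setting $\ff_n(\arx)$ to be the $\Z_2$ count of rigid embedded holomorphic $(n+1)$-gons whose input sides sit on cylinders indexed by parameters in the pre-isotopy block and whose output side sits on a cylinder indexed by a parameter in the post-isotopy block. Admissibility before and after each basic isotopy together with Lemma~\ref{finitesums} ensures that $\ff_n(\arx)$ is a finite sum.

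Next I would verify that the $\ff_n$ satisfy Equation~\ref{eq:morphismrlns} by compactifying the corresponding one-dimensional moduli spaces via Theorem~\ref{cptthm}. Boundary points split into input-side breakings, in which an $\fm$-polygon breaks off before the continuation region and produces the left-hand side of Equation~\ref{eq:morphismrlns}, and output-side breakings, in which one or more sequential continuation polygons feed into an $\fn$-polygon on the $\Gamma'$ side and produce the right-hand side. To obtain a homotopy inverse I would run the same construction for the reverse basic isotopy, yielding $\fg \co (\cP',\fm') \to (\cP,\fm)$, and then define a homotopy $\fH$ by counting rigid polygons for an auxiliary one-parameter family of admissible perturbations that deforms the concatenation $\Gamma \to \Gamma' \to \Gamma$ through admissible diagrams into the constant family. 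The parametrized ends of this moduli give, on the one hand, the composition $\fg \circ \ff$ as prescribed by Equation~\ref{eq:comp}, and on the other the identity morphism, producing the identity Equation~\ref{eq:htpyrlns}. The symmetric argument treats $\ff \circ \fg$.

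The hard part will be controlling compactness across the enlarged parameter family and verifying that admissibility persists throughout. The Type 3 isotopy of Figure~\ref{r3} is shaped precisely so that no new annoying-curve or bubbling degenerations appear beyond those catalogued in Theorem~\ref{cptthm} (compare the second sentence of the proof of \cite[Lemma~9.2]{L}); I would need to check in our polygon setting that every limiting configuration in the one-parameter moduli defining $\ff$, $\fg$, and $\fH$ factors through one of the two breaking types above. I would also have to refine Lemma~\ref{connect} so that admissibility holds not merely at each basic step but at every intermediate slice of each Type 2 and Type 3 isotopy, and at every slice of the auxiliary family used to build $\fH$, so that all defining sums remain finite over $\Z_2$.
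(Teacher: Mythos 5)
Your plan to use a direct continuation-style $(n+1)$-gon does not produce a well-defined $A_\infty$ morphism $\cP^{\times n}\to\cP'$ for $n\geq 2$, and this is a genuine gap, not a technicality. In $W_{n+1}=\Sigma\times\Delta_{n+1}$, each generator lives at a vertex $v^{\ell/(n+1)}$, which is the common endpoint of the adjacent edges $e^{\ell/(n+1)}$ and $e^{(\ell+1)/(n+1)}$. If you label the $n$ ``input'' edges with pre-isotopy slices $\Gamma^{t_1},\ldots,\Gamma^{t_n}$ and the ``output'' edge with a post-isotopy slice ${\Gamma'}^{t_{n+1}}$, then the output vertex $v^{(n+1)/(n+1)}$ sits between $e^{(n+1)/(n+1)}$ (post) and $e^{1/(n+1)}$ (pre), so its limiting generator is a $k$-tuple of intersections between $\Gamma$- and $\Gamma'$-type circles: it is neither an element of $\cP$ nor of $\cP'$. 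The same problem occurs at the input vertex $v^{n/(n+1)}$. For strips ($n=1$) your idea works because the two edges are $\R$-invariant and the boundary conditions can interpolate as in \cite[Section~9]{L}, but a genuine $(n+1)$-gon with $n\geq 2$ has no $\R$-translation, so you cannot vary boundary conditions within an edge without further apparatus.

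The paper's proof is structured precisely to sidestep this issue. Rather than a single polygon with mixed boundary conditions, it introduces \emph{interpolation strips} $W^{n,n',\ell}$ --- bigons whose cylinders and almost complex structure transition from $\Gamma$-type to $\Gamma'$-type in the $\R$-direction --- and then forms a separate target manifold $W^p_n$ for each summand of Equation~\ref{eq:ainftyrlns}, by gluing a $\Gamma$-polygon $W_j$ to a $\Gamma'$-polygon $W'_i$ through interpolation strips at the corners (see Figure~\ref{isotex}). The isotopy morphism $\ff_n^{01}$ is then a sum over all $p$ of rigid curve counts in $W^p_n$, all of whose incoming corners are pure $\cP$-generators and whose outgoing corner is a pure $\cP'$-generator. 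The $A_\infty$ morphism identities, the independence of the choice of strips and $J$, and the composition identity $\ff^{12}\circ\ff^{01}\simeq\ff^{02}$ are established in Lemmas~\ref{isotmorlem}, \ref{cylchoices}, and \ref{isotmorcomp}, and Proposition~\ref{isotlem} follows by applying these to the trivial isotopy, whose only contributing curves are trivial disks. To repair your approach you would either need to allow moving Lagrangian boundary conditions along one edge (requiring a nontrivial reformulation of (\hyperlink{J1}{{\bf J1}})--(\hyperlink{J5}{{\bf J5}}) and the compactness results) or adopt a triangle-map-style device with auxiliary $\bth$ generators and larger polygons, as the paper does for the handleslide and multislide moves in Section~\ref{slides}; neither was laid out in your sketch.

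A secondary point: your appeal to Lemma~\ref{finitesums} for finiteness of $\ff_n(\arx)$ does not directly apply once boundary conditions vary through the isotopy. The paper addresses this by choosing the basepoints $z_i$ to avoid all circles throughout the isotopy so that the area argument persists; you would need a parallel safeguard, especially for the auxiliary family used to build your homotopy $\fH$.
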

By Lemma~\ref{connect}, the result follows from invariance under admissible isotopy of, say, $\gamma_1$, yielding the new surface diagram $(\Sigma,\Gamma')$. The following argument follows ideas from the proofs of \cite[9.1-9.5]{L}: Let $J,J'$ be almost complex structures on $W_{n+1}$ and $W'_{n+1}$ satisfying (\hyperlink{J1}{{\bf J1}})-(\hyperlink{J5}{{\bf J5}}) for $(\Sigma,\Gamma)$ and $(\Sigma,\Gamma')$, respectively. For $n>0$ and $p$ and incoming corner of $W_{n+1}'$, choose $T>0$, and let 
\[W^{n,n',\ell}=(\Sigma\times[0,1]\times\R,\omega),\]
decorated with a $k$-tuple of Lagrangian cylinders that agree with $\Gamma^\frac{n+1}{n+1}$ near $-\infty$ and ${\Gamma'}^\frac{\ell}{n'+1}$ near $+\infty$. The strip $W^{n,n',\ell}$ also comes equipped with an almost complex structure $J^{n,n',\ell}$ satisfying (\hyperlink{j1}{{\bf J$^\ell$1}}), (\hyperlink{j2}{{\bf J$^\ell$2}}) and (\hyperlink{j4}{{\bf J$^\ell$4}}) that agrees with $J^{(n+1)/(n+1)}$ for $\R$-coordinates less than $T$, and agrees with ${J'}^{\ell/(n'+1)}$ for $\R$-coordinates greater than $T$. We call this manifold an \emph{interpolation strip}.

For fixed $n>0$, one can read each summand in Equation~\ref{eq:ainftyrlns} as gluing instructions for making a manifold with cylindrical ends by making $\fm_i$ correspond to $W_i'$ and $\fm_j$ correspond to $W_j$, and gluing according to composition of functions, inserting interpolation strips wherever they are needed to make the cylinders meet smoothly. For example, the expression
\[\fm_3'\left(a_1,\fm_2(a_2,a_3),a_4\right)\]
corresponds to the manifold in Figure~\ref{isotex}, where the incoming ends are at the left and the outgoing end is at the right.
\begin{figure}\capstart\begin{center}\includegraphics{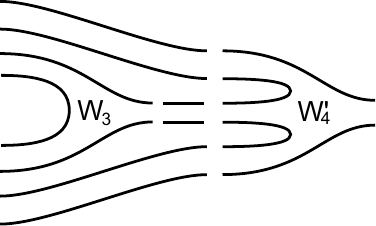}\end{center}\caption{Diagram of the target space used in the isotopy map specified by a summand in the $A_\infty$ relations. All three strips are interpolation regions specified by the corners of $W_4'$.\label{isotex}}\end{figure}
Enumerate the manifolds corresponding to the addends of Equation~\ref{eq:ainftyrlns} for fixed $n$ as $W_n^1,\ldots,W_n^{m(n)}$ and let $\cM_0^{n,p}(\arx,y)$ denote the moduli space of index 0 holomorphic curves $u\co S\to W_n^p$ satisfying (\hyperlink{m0}{{\bf M0}})-(\hyperlink{m6}{{\bf M6}}), using the unique representatives of the entries of $\arx$ in $\cP$ at the relevant corners of $W^p_n$. For an admissible isotopy that sends the diagram $(\Sigma,\Gamma^0)$ to $(\Sigma,\Gamma^1)$, define the isotopy map $\ff^{01}$ by the formula
\[\ff_n^{01}(\arx)=\sum\limits_{p=1}^{m(n)}\sum\limits_\y\#\cM_0^{n,p}(\arx,\y).\]
Finiteness of this sum follows just as in Lemma~\ref{finitesums}, where the points $z_i$ are chosen so they do not intersect any of the circles during the isotopy.
\begin{rmk}The above construction may seem strange when one considers gradings, because it seems it would reduce the grading (if it can be defined) by 2. For this reason, if one were to put a $\Z$ grading on the generators of $\cP$ instead of just a $\Z_2$ grading, it might be required to think of this as a morphism from $(\cP^0,\fm^0)$ to the shifted algebra $(\cP^0,\fm^0)[2]$, which may in the end mean that any index that can be put on all generators simultaneously would only be defined up to a global shift. There are similar observations for the maps in Section~\ref{slides}. The author intends to return to this in later work.\end{rmk}
\begin{lemma}\label{isotmorlem}The isotopy map $\ff^{01}$ is an $A_\infty$ morphism.\end{lemma}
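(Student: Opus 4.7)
The plan is to prove this by the standard boundary-of-a-1-manifold argument: identify the ends of the one-dimensional moduli spaces $\cM_1^{n,p}(\arx,\y)$ (across all enumerations $p$) with the summands appearing in the $A_\infty$ morphism relation (\ref{eq:morphismrlns}), and invoke the mod $2$ count of boundary points of a compact 1-manifold. So the task splits into (a) proving a compactness-with-boundary theorem for $\cM_1^{n,p}$, and (b) matching boundary configurations bijectively to the terms of (\ref{eq:morphismrlns}).

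First, I would extend Theorem~\ref{compactness} to the manifolds $W_n^p$. These manifolds are built from pieces of the form $W_i$, $W_j'$, and interpolation strips $W^{n,n',\ell}$ glued along cylindrical ends. For generic choices of the complex structures $J$, $J'$, the interpolation parameter $T$, and the $J^{n,n',\ell}$, transversality follows exactly as in \cite[Section~3]{L} combined with Theorem~\ref{compactness}, since each piece is locally a cylindrical (or translation-invariant outside the interpolation region) target. Annoying curves and sphere/disk bubbling are ruled out by the same arguments as in Theorem~\ref{compactness}. The novel degenerations are necks appearing in interpolation regions, but condition (\hyperlink{J5}{\textbf{J5}}) restricts these to the boundary of $W_n^p$, where they either form trivial cobordisms of moduli spaces after a small perturbation (as in Theorem~\ref{compactness}) or produce genuine level splittings.

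Second, I would classify the boundary points. A boundary point of $\cM_1^{n,p}(\arx,\y)$ arises from a height-two holomorphic building in which level breaking occurs along a $k$-tuple of Reeb chords somewhere in $W_n^p$. The location of the break determines the type of term produced: a break inside a $W_j$ subpiece yields a level in the $(\Sigma,\Gamma^0)$-theory, producing a factor $\fm_{j-\ell}(a_\ell,\ldots,a_{j-1})$ nested inside a smaller $\ff_{n-j+\ell+1}$, which is precisely a term on the left-hand side of (\ref{eq:morphismrlns}); a break inside a $W_i'$ subpiece yields a level in the $(\Sigma,\Gamma^1)$-theory, producing an outermost $\fn_\ell$ applied to several outputs of $\ff$-pieces, which is a term on the right-hand side; a break inside an interpolation strip factors the building into two honest $\cM_0^{n,p'}$ components of lower tree complexity, producing the mixed summands of the morphism relation. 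The enumeration $W_n^1,\ldots,W_n^{m(n)}$ is designed so that summing over $p$ produces exactly one copy of each term of (\ref{eq:morphismrlns}).

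Third, the $A_\infty$ morphism relations follow: for each $\arx,\y$, adding all boundary points of $\bigsqcup_p \cM_1^{n,p}(\arx,\y)$ with $\Z_2$ coefficients yields zero, which is exactly the $\y$-coefficient of (\ref{eq:morphismrlns}) applied to $\arx$. The main obstacle will be the bookkeeping in step two, namely verifying that the combinatorial enumeration of trees with one $\fm$ node or one $\fH$-style location matches precisely the $W_n^p$ list without double-counting or omission, and that the gluing theorem near each interpolation strip produces a one-to-one correspondence with boundary strata (rather than contributing with multiplicity). Both difficulties are resolved by the same kind of tree-combinatorics used in the proof of \cite[Lemma~9.5]{L}, adapted to allow arbitrary arities on both sides by replacing trees with chains of polygon composites.
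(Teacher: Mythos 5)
Your proposal takes the same approach as the paper's proof, which is exactly the boundary-of-a-one-manifold argument applied to $\bigsqcup_p\bigsqcup_\y\overline{\cM_1^{n,p}(\arx,\y)}$. The paper states the result in two sentences, asserting that $W_n^p$ has the same bubbling and compactness properties as $W_{n+1}$, so the signed (mod 2) count of the compactification's boundary vanishes and equals the left side minus the right side of the morphism relation; it leaves the bookkeeping entirely implicit. Your proposal supplies that bookkeeping, which is useful. One inaccuracy worth flagging: your third type of breaking, ``a break inside an interpolation strip factors the building into two honest $\cM_0^{n,p'}$ components,'' is not quite right — interpolation strips are not $\R$-translation invariant, so level splitting cannot occur in their interior; it only occurs at the $\Gamma^0$-cylindrical end (giving an $\fm$-level concatenated with an $\ff$-piece, a left-hand term) or the $\Gamma^1$-cylindrical end (giving an $\ff$-piece concatenated with an $\fn$-level, a right-hand term). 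In other words, the third case collapses into the first two, and what you call ``mixed summands'' are already accounted for by breakings at cylindrical ends; a breaking that produces two pieces each of $\ff$-type (taking $\Gamma^0$-generators to $\Gamma^1$-generators) does not arise. With that correction, your classification matches the terms of Equation~(\ref{eq:morphismrlns}) as intended.
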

\begin{proof}The moduli space $\cM^{n,p}_1(\arx,\y')$ of index 1 holomorphic curves in $W^p_n$ satisfying (\hyperlink{m0}{{\bf M0}})-(\hyperlink{m6}{{\bf M6}}) is a smooth 1-dimensional manifold that has the same bubbling properties as $W_{n+1}$, so it has a compactification whose boundary consists of height two holomorphic buildings, with level splitting according to arcs in $\Delta_{n+1}$. For this reason, the left hand side minus the right hand side of Equation~\ref{eq:morphismrlns} is a count of 
\[\partial\bigsqcup\limits_{p=0}^{m(n)}\bigsqcup\limits_\y\overline{\cM^{n,p}_1(\arx,\y)},\]
which is zero for each $n$.\end{proof}
\begin{lemma}\label{cylchoices}Different choices of cylinders and almost complex structures in the strips $W^{n,n'n\ell}$ yield homotopic isotopy maps.\end{lemma}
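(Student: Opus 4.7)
The plan is a standard 1-parameter family argument. Given two admissible choices $(C^{(0)},J^{(0)})$ and $(C^{(1)},J^{(1)})$ of cylinders and almost complex structures in the interpolation strips $W^{n,n',\ell}$, I would connect them by a generic smooth path $(C^{(s)},J^{(s)})_{s\in[0,1]}$ of interpolation data, each satisfying the required conditions at the ends and the analogues of (\hyperlink{j1}{{\bf J$^\ell$1}})--(\hyperlink{j4}{{\bf J$^\ell$4}}) in the interior. For each gluing pattern $p\in\{1,\ldots,m(n)\}$ this determines a smooth 1-parameter family of target manifolds $W^{p}_n(s)$, and I would set
\[
\fH_n(\arx)=\sum_{p=1}^{m(n)}\sum_{\y}\#\cM^{n,p,I}_{-1}(\arx,\y)\cdot\y,
\]
where $\cM^{n,p,I}_{-1}(\arx,\y)$ is the parameterized moduli space of pairs $(s,u)$ with $u$ a holomorphic curve in $W^{p}_n(s)$ of index $-1$ satisfying (\hyperlink{m0}{{\bf M0}})--(\hyperlink{m6}{{\bf M6}}). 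By a Sard-Smale argument in the space of admissible paths, parallel to the transversality setup underlying the definition of $\ff^{01}$ itself, this parameterized space is generically a compact 0-manifold, so $\fH_n$ is well-defined.

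To verify that $\fH$ is a homotopy in the sense of Equation~\ref{eq:htpyrlns}, I would compactify the parameterized moduli space $\overline{\cM^{n,p,I}_{0}(\arx,\y)}$ of index $0$ curves and identify its boundary, using the compactness analysis of Theorem~\ref{compactness} applied fiberwise over $[0,1]$. Sphere bubbles are excluded by $\pi_2(W^{p}_n(s))=0$, disk bubbles by the homological essentiality of the circles, interior Deligne--Mumford degenerations by index, and interior annoying curves by (\hyperlink{j5}{{\bf J5}}); the remaining boundary strata are:
\begin{itemize}
\item fibers over $s=0$ and $s=1$, contributing $\ff^{01,(0)}_n(\arx)+\ff^{01,(1)}_n(\arx)$;
\item height-two buildings arising from level splitting along an arc in $\Delta_{n+1}$, in which exactly one factor carries the varying parameter $s\in[0,1]$ (counted by $\fH$) while the other factor is an unparameterized curve at $s=0$ or $s=1$ (counted by $\ff^{01,(0)}$ or $\ff^{01,(1)}$) or an $\fm$- or $\fn$-building in the original two algebras (as in Lemma~\ref{isotmorlem}).
\end{itemize}
Summing modulo $2$ these contributions produces exactly the two sums on the right-hand side of Equation~\ref{eq:htpyrlns}, with the $\fm$-insertions coming from breakings on the incoming side, the $\fn$-compositions from breakings on the outgoing side, and the $\fH$ term from the stratum where the varying parameter concentrates.

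The main technical obstacle is ensuring transversality simultaneously for every stratum of every $W^{p}_n(s)$ while preserving all the constraints built into an admissible interpolation (tangency of $\xi$ near cylinders, split structure near the $z_i$, and the cylindrical form near the ends), and then matching boundary strata bijectively with the terms in Equation~\ref{eq:htpyrlns}. The transversality step is handled as in \cite[Section~3]{L} by enlarging the set of almost complex structures allowed along the path, which is possible because none of the boundary-arc and interior conditions (\hyperlink{J1}{{\bf J1}})--(\hyperlink{J5}{{\bf J5}}) are rigid enough to obstruct a generic perturbation; the combinatorial matching between boundary strata and the homotopy relation follows from the same gluing identification used in Lemma~\ref{isotmorlem}, applied now with one distinguished parameterized factor.
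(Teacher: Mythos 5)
Your outline is the standard continuation-map-independence argument: connect the two choices by a generic path of interpolation data, define $\fH_n$ by counting rigid parameterized curves, and read Equation~\ref{eq:htpyrlns} off the boundary of the one-dimensional parameterized moduli space. This is also what the paper does, by following the proof of Lipshitz's Lemma~9.4. However, your enumeration of the boundary strata is incomplete in a way that is precisely the nontrivial content of the paper's proof. Because this is a \emph{weak} $A_\infty$ structure, $\fm_0\neq0$, and the compactification $\cup_t\cM_{0,t}(\x^1,\y^2)$ acquires eight additional types of ends that are not present in the strong setting: a rigid 1-gon can bubble off on either side of the strip, and it can be holomorphic for any of $J_1$, $J_2$, $J$, or $J'$ (the two interpolation data being compared and the two endpoint almost complex structures). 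These ends do not automatically match terms in Equation~\ref{eq:htpyrlns}; the paper argues that they cancel in pairs across the square of almost complex structures appearing in Lipshitz's construction, invoking invariance of $(\cP,\fm)$ under generic perturbation of $J$. Your proof simply asserts that every boundary stratum lands on the right-hand side of the homotopy relation, which glosses over exactly the degenerations the lemma needs to handle.

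One smaller confusion: in your description of level splittings ``in which exactly one factor carries the varying parameter $s\in[0,1]$...while the other factor is an unparameterized curve at $s=0$ or $s=1$'', the unparameterized factor is not at $s=0$ or $s=1$; both levels of a height-two building occur at the same interior value of $s$. The translation-invariant level is insensitive to $s$ only because the cylindrical ends do not see the interpolation data, not because the breaking is concentrated at an endpoint of the parameter interval. Fixing this is not hard, but the missing $\fm_0$ cancellation is a genuine gap and is where the actual work of the lemma lies.
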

\begin{proof}This follows from an argument much like the proof of \cite[Lemma~9.4]{L}, except the analogous moduli space $\cup_t\cM_{0,t}(\x^1,\y^2)$ has eight more types of ends coming from terms corresponding to $\fm_0$: there could be level splitting  of $J_1,J_2,J,$ or $J'$-holomorphic curves on either side of the strip, but all such contributions cancel with contributions from opposite sides of the square of almost complex structures he defines, because of independence of $(\cP,\fm)$ under generic perturbations of almost complex structures.\end{proof}
\begin{lemma}\label{isotmorcomp}Suppose there is an admissible isotopy sending $(\Sigma,\Gamma^0)$ to $(\Sigma,\Gamma^1)$ to $(\Sigma,\Gamma^2)$. Then $\ff^{12}\circ\ff^{01}$ is homotopic to $\ff^{02}$.\end{lemma}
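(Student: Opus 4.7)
The plan is to follow the standard parametrized moduli space argument that appears in Lemma~9.5 of~\cite{L}, adapted to the polygon setting already developed in Sections~\ref{modulispaces} and~\ref{isotopy}. Fix an admissible isotopy from $(\Sigma,\Gamma^0)$ through $(\Sigma,\Gamma^1)$ to $(\Sigma,\Gamma^2)$ and, for each $n\geq1$ and each summand index $p$ in Equation~\ref{eq:ainftyrlns}, construct a one-parameter family of interpolation configurations $W_n^{p,R}$, $R\in\R$, that share all cylindrical ends and almost complex structure data with the target spaces used to define $\ff^{01}$, $\ff^{12}$, and $\ff^{02}$. The parameter $R$ controls the length of a neck inserted along the intermediate $\Gamma^1$ stage: as $R\to+\infty$ the neck becomes infinite and the configuration breaks into the two-stage $W^{01}$ piece followed by the $W^{12}$ piece (so index zero curves are counted by $\ff^{12}\circ\ff^{01}$), while as $R\to-\infty$ the intermediate strips degenerate into the single-isotopy configuration defining $\ff^{02}$. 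This is precisely the polygon analogue of the square of almost complex structures in Lemma~\ref{cylchoices}.

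Define $\fH_n\co\cP^{\otimes n}\to\cP$ by counting, for each $p$ and each $\y$, the zero-dimensional parameterized moduli space
\[\bigsqcup_{R\in\R}\cM^{n,p,R}_{-1}(\arx,\y),\]
so that the varying parameter $R$ contributes one to the index. Finiteness of this sum follows verbatim from the argument of Lemma~\ref{finitesums}, using that the points $z_i$ may be chosen disjoint from the swept-out annulus of the entire isotopy. The homotopy relation itself comes from analyzing the boundary of the one-dimensional parameterized moduli space $\bigsqcup_R\cM^{n,p,R}_0(\arx,\y)$: by the compactness Theorem~\ref{cptthm}, the ends consist of the two extreme configurations at $R=\pm\infty$ (contributing $\ff^{02}_n$ and the composition $(\ff^{12}\circ\ff^{01})_n$ respectively), level splittings in the target that pair $\fH$ with $\fm$ or $\fn$ factors (giving the remaining terms in Equation~\ref{eq:htpyrlns}), and the usual annoying-curve and sphere/disk bubbling, which are ruled out just as in Section~\ref{modulispaces}.

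Signed counting the boundary of this 1-manifold then yields the homotopy equation~\ref{eq:htpyrlns} with $\ff=\ff^{12}\circ\ff^{01}$, $\fg=\ff^{02}$, and $\fH$ as defined; the composition formula~\ref{eq:comp} is used to collect the $R\to+\infty$ ends into the correct shape. Independence of $\fH$ from auxiliary choices of cylinders and almost complex structures in the parameterized family is an application of Lemma~\ref{cylchoices} run in a two-parameter family.

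The main obstacle is bookkeeping the degenerations involving the $\fm_0$ term: in the polygon setup, index-zero disks with no incoming punctures can bubble off either side of any interpolation region during the $R$-variation, and these contributions must either cancel in pairs (as in the proof of Lemma~\ref{cylchoices}) or reassemble into the $\fn_{\ell+m+1}$ terms of Equation~\ref{eq:htpyrlns}. I expect that the same cancellation mechanism used to prove Lemma~\ref{cylchoices}, applied to each inserted interpolation strip in $W_n^{p,R}$, handles these boundary contributions and lets the remaining ends package exactly as required. The other point requiring care is verifying that annoying curves developing at the $R=\pm\infty$ ends correspond to genuine two-story buildings with the correct cylindrical end data, which is dealt with by the same cylinder perturbation argument used at the end of the proof of Theorem~\ref{compactness}.
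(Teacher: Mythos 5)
Your proposal takes a genuinely different route from the paper. You construct a one-parameter family of interpolation configurations and define the homotopy $\fH$ by counting index $-1$ curves in the parametrized moduli space, reading off Equation~\ref{eq:htpyrlns} from the boundary of a one-dimensional parametrized moduli space; this is the standard neck-stretching argument of Floer theory, adapted to the polygon setting. The paper instead argues purely algebraically: it invokes Lemma~\ref{cylchoices} to \emph{choose} the interpolation strips for $\ff^{02}$ to be the concatenation of those used for $\ff^{01}$ and $\ff^{12}$, after which the terms of the composition formula~\ref{eq:comp} (i.e.\ the manifolds $W^p_n$ counted) pair up and cancel over $\Z_2$ except for exactly the ones defining $\ff^{02}$. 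So the paper's proof produces a trivial homotopy after a change of auxiliary data, while yours constructs a nontrivial one directly; the paper's version avoids the entire parametrized compactness and gluing package at the cost of leaning on Lemma~\ref{cylchoices}, whereas your version is more self-contained but carries all the technical overhead that the paper outsources.

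Two small cautions about your version. First, the parametrization should run over $R\in[0,\infty)$ (or a compact interval): $R\to+\infty$ is the breaking end giving $\ff^{12}\circ\ff^{01}$, and $R=0$ is the unbroken end giving $\ff^{02}$ with a particular choice of cylinders; there is no sensible $R\to-\infty$ limit, since a negative neck length has no geometric meaning. Second, unlike the single-strip case, the manifolds $W^p_n$ contain several interpolation strips (one for each internal edge of the glued tree), so the $R$-family has to stretch all of them coherently, and the $R\to\infty$ end degenerates into the full sum over breakings $0<k_1<\cdots<k_\ell=n$ of Equation~\ref{eq:comp} rather than into a single two-story building. You gesture at this, but it should be made explicit that the boundary identification is with the entire right-hand side of~\ref{eq:comp}, not a single product. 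With those two points tightened, your argument should go through and is a reasonable, if longer, alternative to the paper's.
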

\begin{proof}By Lemma~\ref{cylchoices}, the manifolds obtained by concatenating the interpolation strips as defined for $\ff^{01}$ and $\ff^{12}$ can serve as the concatenation strips used to define $\ff^{02}$, since such choices do not affect its homotopy class. With this in mind, it is straightforward to pair up all of the manifolds $W^p_n$ (and thus the holomorphic curves counted therein) that appear in the composition according to Equation~\ref{eq:comp}, except for precisely those used in the definition of $\ff^{02}$.
\end{proof}
\begin{proof}[Proof of Proposition~\ref{isotlem}]According to the Lemmas~\ref{isotmorlem}, \ref{cylchoices}, and \ref{isotmorcomp}, the isotopy morphism $\ff^{01}$ for the given admissible isotopy and the isotopy morphism $\ff^{10}$ for its inverse compose to give morphisms $\ff^{01}\circ\ff^{10}$ and $\ff^{10}\circ\ff^{01}$ that are homotopic to the isotopy morphisms $\ff^{00}$ and $\ff^{11}$ coming from trivial isotopies. As in Lemma~\ref{isotmorcomp}, the manifolds used in the definition of the isotopy morphisms for a trivial isotopy come in identical pairs for $n>1$, so those maps are the zero map, while for $n=1$ it is not difficult to see the only curves that contribute are the so-called \emph{trivial disks}, copies of $\x\times\Delta_2$, so that those maps are identity maps.\end{proof}
\end{section}
\begin{section}{Handleslide, multislide and shift}\label{slides}
According to Lemma~\ref{slidelem}, the three moves in the title of this section are realized in a surface diagram by sequences of what look like three-dimensional handleslides, in a modification we will simply call a \emph{slide} of one circle over another. The shift move has one additional re-ordering of vanishing cycles that will also be addressed, thought the relevance of that discussion is mainly for later work. To slide $\gamma_i$ over $\gamma_j$, choose an embedded path from a point in $\gamma_i$ to a point in a homologically distinct $\gamma_j$, whose intersection with $\gamma_i$ and $\gamma_j$ is precisely its endpoints, and then replace $\gamma_i$ with the connect sum $\gamma_i\#\gamma_j$ specified by that path. Much of this section is concerned with curves in strips - using the notation $\ba,\bb$ from Definition~\ref{vocab}, let $W_{\alpha\beta}$ denote the cylindrical manifold $\Sigma\times[0,1]\times\R$ decorated with the cylinders $\Gamma^{t_1}\times\{0\}\times\R$, $\Gamma^{t_2}\times\{1\}\times\R$. Denote by $\bb^H$ the set of circles that comes from performing a slide on an element of $\bb$. There is a canonical inclusion of generators that respects the partition of generators into sectors in the sense that the sectors in the second group can still be interpreted as elements of $H_1(X)$ using the obvious ``same" reference generator as the one for $(\ba,\bb)$, and there is a natural correspondence of domains $\pi_2^{\alpha\beta}(\x,\y)\to\pi_2^{\alpha\beta^H}(\x,\y)$ (see, for example, \cite[Figure~41]{OSS}). The handleslide move modifies an element $\gamma\in\Gamma$ by sliding over one circle, or once over each of two disjoint circles; see for example \cite[Section~2.1]{W2} or \cite[Proposition~4.9]{BH}. In this move, the first slide may introduce a self-intersection in $\gamma$, and the second always removes it (see, for example, \cite[Figure~6]{W2}). The other two moves are realized by slides that preserve the condition that $\gamma$ is a simple closed curve. We address this by allowing $\gamma$ to undergo a pair of slides as needed, explaining why the maps behave as required: on the level of generators, it is essentially what one would expect from a composition of handleslide maps in Heegaard-Floer theory. Like in Section~\ref{isotopy}, there first appears a map $\cP\to\cP^H$, where $\cP^H$ is the $\Z_2$ vector space generated by the collection of generators coming from the surface diagram that has undergone one or a pair of slides. Then that map is inserted appropriately to produce a homotopy equivalence of $A_\infty$ algebras. Here follows the main result of the section.

\begin{prop}[c.f. Proposition~11.2 of \cite{L}]\label{lil11.2}Let $(\Sigma,\Gamma')$ be obtained from the admissible surface diagram $(\Sigma,\Gamma)$ by shift, handleslide or multislide. Then the algebra $(\cP,\fm)$ defined using $(\Sigma,\Gamma)$ is homotopy equivalent to the algebra $(\cP',\fm')$ defined using $(\Sigma,\Gamma')$.\end{prop}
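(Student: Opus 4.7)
Following Lipshitz's Section 11, the plan is to build an explicit chain of $A_\infty$ morphisms whose composition is a homotopy equivalence. By Lemma~\ref{slidelem}, each of the three moves decomposes into (a) a sequence of elementary slides of a single circle over another, together with (b) for shift, a final reordering of the elements of $\Gamma$. Step (a) can be handled uniformly: a slide of $\beta_i$ over $\beta_j$ produces a new triple $(\Sigma,\ba,\bb^H)$, possibly with $\beta_i^H$ immersed if the slide is one of a cancelling pair, which is allowable since $(\cP,\fm)$ is defined for immersed Lagrangians.

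For each elementary slide I would build an interpolating polygon manifold $W^H_{n+1}$ whose $(n+2)$ cylindrical ends carry, in cyclic order, perturbed copies of $\ba$, $\bb$ and $\bb^H$, together with an admissible almost complex structure. An index formula of the same shape as Proposition~\ref{indexprop} holds here (with the usual extra $k/2$ from the added edge). Define the slide map $\ff^H\co(\cP,\fm)\to(\cP^H,\fm^H)$ by counting index-$0$ curves in this moduli space, in direct analogy with the isotopy map from Section~\ref{isotopy}. The $A_\infty$ morphism relations of Equation~\ref{eq:morphismrlns} then follow from the compactification of the index-$1$ spaces exactly as in Lemma~\ref{isotmorlem}, and the analog of Lemma~\ref{cylchoices} gives independence of auxiliary choices up to $A_\infty$ homotopy.

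To produce a homotopy inverse, define the reverse slide map $\ff^{\bar H}$ and, following Lemma~\ref{isotmorcomp}, compute the composition $\ff^{\bar H}\circ\ff^H$ by concatenating interpolating strips. A one-parameter family of such strips degenerates the concatenation into a count of \emph{thin triangles} supported in the thin region $\mathfrak{T}_{\alpha\beta}$ of Proposition~\ref{h2prop}; these thin-triangle counts yield the identity up to $\fm_1$-exact contributions, just as in the last line of the proof of Proposition~\ref{isotlem}. For handleslides requiring a cancelling pair of slides, one composes two slide morphisms, using admissibility of each intermediate immersed diagram. For the shift move's final reordering, the cyclic symmetry of $\Delta_{n+1}$ gives a canonical isomorphism for cyclic permutations, and the remaining non-cyclic transposition between the last two entries is realized as an isotopy of the main perturbation whose codimension-$1$ strata contribute canceling pairs to both sides of Equation~\ref{eq:morphismrlns}.

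The main obstacles will be two-fold. First, preserving admissibility through the intermediate diagrams, particularly when an element of $\Gamma$ is immersed during a cancelling pair of slides; this requires verifying that the slide does not introduce new periodic domains with constant sign, which can be arranged by localizing the auxiliary winding used in the admissibility lemma away from the slide region. Second, and more seriously, one must enumerate all boundary components of the relevant one-dimensional moduli spaces over $W^H_{n+1}$: beyond the standard level splittings and annoying curves of Theorem~\ref{compactness}, one must account for the triangles that bubble off near the new edge labeled $\bb^H$, which is exactly the thin-triangle phenomenon at the heart of \cite[Section~11]{L}. Lifting that analysis from triangles to $(n+2)$-gons, while keeping track of annoying curves and immersed circles, is the essential technical ingredient of the argument.
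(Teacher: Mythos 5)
Your approach is genuinely different from the paper's, and it has some significant gaps that would need to be filled before it could be considered correct.

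First, a concrete structural problem. Your proposed $(n+2)$-gon carries edges labeled $\ba$, $\bb$, and $\bb^H$, so the moduli count it produces is a map into $k$-tuples of intersection points between $\ba$ and $\bb^H$. But the target algebra $(\cP^H,\fm^H)$ is defined from $(\Sigma,\Gamma^H)$, whose generators are intersection points between $\ba^H$ and $\bb^H$. Since both $\ba$ and $\bb$ are perturbed copies of the same $\Gamma$, a slide of $\gamma_1$ over $\gamma_n$ changes \emph{both} collections of cylinders. Your construction therefore does not land in the right vector space. This is exactly why the paper's map $\varphi$ in Equation~\ref{eq:varphi} is a composition of two triangle maps, $\varphi_{\bb^H\bb\ba}$ followed by $\varphi_{\bb^H\ba\ba^H}$, one sliding the $\beta$ side and one sliding the $\alpha$ side. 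Any single-polygon version of your proposal would need an extra edge (making it an $(n+3)$-gon or a composition of two polygon maps).

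Second, a more serious methodological difference. You count all index-$0$ curves in the interpolating polygon, but the paper restricts the triangle counts to \emph{thin} homology classes $\mathfrak{T}$ from the outset. This restriction is not an optional convenience: Claim~\ref{thetacycles} only shows $\bth_{\bb\bb^H}$ is a cycle with respect to $\fm_1^{thin}$, and the identity of Lemma~\ref{chainh} and Sublemma~\ref{sublem} (that $\varphi_{\bb^H\ba\ba'}(\x,\bth_{\ba\ba'})$ is the identity on generators via the small-triangle count) rely on the thin restriction. By avoiding thick triangles entirely, the paper sidesteps the need for triple/quadruple admissibility in the sense of \cite{L}. Without that restriction your proposal would need to establish admissibility of the relevant triple and quadruple diagrams and to control all thick triangle classes, which is precisely the analysis the paper explicitly declines to carry out because its restricted maps make it unnecessary. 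Moreover, the paper's verification of the morphism relations for thick \emph{strip} classes requires a substantial neck-stretching argument (Claim~\ref{cancelthick1}), using the linear bijection $\Phi\co\pi_2^{\alpha\beta}\to\pi_2^{\alpha\beta^H}$ and stretching along circles near $\beta_n$ to reduce to the known statement that the Heegaard-Floer handleslide triangle map is a chain isomorphism. You mention the "thin-triangle phenomenon" only in passing for the homotopy inverse, but you have no analogue of this identification of thick moduli spaces, and it is the technical heart of the paper's proof.

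Finally, a smaller point: the paper builds $\ff_n$ not as a single-polygon count but as the algebraic expression in Equation~\ref{eq:slidemap}, inserting the thin triangle composite $\varphi$ into the addends of the $A_\infty$ relation, in parallel with how Section~\ref{isotopy} inserts interpolation strips. Your direct $(n+2)$-gon approach, while closer to standard Fukaya-categorical practice, is a different construction and its boundary strata analysis (and hence the verification of Equation~\ref{eq:morphismrlns}) would need to be done from scratch rather than borrowed from Lemma~\ref{isotmorlem}.
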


The result follows from repeated application of the following
\begin{lemma}\label{slideinv}Let $(\Sigma,\Gamma^H)$ be obtained from the admissible surface diagram $(\Sigma,\Gamma)$ by sliding $\gamma_1$ over $\gamma_n$ or once each over the disjoint pair of circles $\gamma_n,\gamma_{n'}$. Then the algebra $(\cP,\fm)$ defined using $(\Sigma,\Gamma)$ is homotopy equivalent to the algebra $(\cP^H,\fm^H)$ defined using $(\Sigma,\Gamma^H)$.\end{lemma}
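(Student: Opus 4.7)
The plan is to adapt the handleslide invariance argument from \cite[Section~11]{L} to the weak $A_\infty$ polygon setting, treating a single slide as the fundamental operation (so that pairs of slides are handled by composing two slide equivalences, where the intermediate algebra may be defined using a momentarily self-intersecting $\gamma_1^H$, which the formalism of immersed Lagrangians already accommodates). First I would introduce the triple of $k$-tuples $(\ba,\bb,\bb^H)$ where $\bb^H$ is the slid copy of $\bb$, and identify a canonical generator $\Theta$ in the vector space generated by intersections $\bb\cap\bb^H$: for each $i$, $\beta_i^H$ intersects $\beta_i$ in two points coming from the main perturbation together with the new intersections created by the slide region near $\gamma_n$; $\Theta$ is taken to be the $k$-tuple consisting of the distinguished ``nearest'' intersections on each circle, constructed exactly as in the Heegaard--Floer handleslide lemma. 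A small-area and positivity argument, paralleling the definition of \emph{thin domains} $\mathfrak{T}_{\alpha\beta}$ in Section~\ref{trajectories} and using admissibility, shows that $\Theta$ is a cycle in the relevant $\fm_1^{\bb\bb^H}$.

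Next I would define the slide morphism $\ff=(\ff_n)_{n\geq 1}$ together with an obstruction term $\ff_0$ in exact analogy with the isotopy morphism of Section~\ref{isotopy}. For each $n\geq 0$ consider the manifold $W^{\mathrm{slide}}_{n+2}=\Sigma\times\Delta_{n+2}$, decorated so that $n$ consecutive boundary arcs carry cylinders on the $\Gamma^{t_i}$ used to build $(\cP,\fm)$, one distinguished arc carries cylinders on $\bb$ transitioning to $\bb^H$ with the $\Theta$-asymptotic fixed at the corresponding puncture, and the outgoing arc carries a perturbation of $\Gamma^H$; pick an admissible $J$ agreeing with the given cylindrical almost complex structures near each end. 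Set
\[\ff_n(\arx)=\sum_{\y}\#\cM^{\mathrm{slide}}_0(\arx,\Theta;\y)\cdot\y,\]
where $\cM^{\mathrm{slide}}_0$ is the moduli space of index-$0$ curves into $W^{\mathrm{slide}}_{n+2}$ satisfying analogues of (\hyperlink{m0}{{\bf M0}})--(\hyperlink{m6}{{\bf M6}}). Finiteness of the sum follows from admissibility as in Lemma~\ref{finitesums}. The $A_\infty$ morphism relation (Equation~\ref{eq:morphismrlns}) is then obtained by enumerating the ends of $\cM^{\mathrm{slide}}_1(\arx,\Theta;\y)$ by a direct generalization of Theorem~\ref{compactness}: level splittings along arcs in $\Delta_{n+2}$ whose endpoints lie on input edges produce the $\ff_{n-j+\ell+1}(\ldots,\fm_{j-\ell},\ldots)$ terms, splittings along arcs incident to the output produce the $\fn_\ell(\ff_{n_1},\ldots,\ff_{n_\ell-n_{\ell-1}})$ terms, and any split in which the $\Theta$-end separates off as a $W_1^{\bb\bb^H}$-disk cancels because $\Theta$ is a cycle.

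Finally I would construct a reverse slide morphism $\fg$ using the ``opposite'' slide and its canonical generator $\Theta'\in\bb^H\cap\bb$, and show $\fg\circ\ff\simeq\id$ and $\ff\circ\fg\simeq\id$. The composition $\fg\circ\ff$ unpacks via Equation~\ref{eq:comp} as a count of holomorphic curves in a manifold obtained by stacking two slide regions; a one-parameter family of almost complex structures and cylinders interpolating between the stacked model and a ``trivial'' interpolation strip (where $\bb^H$ is pulled back across to $\bb$) produces a moduli space whose ends give, on one side, $\fg\circ\ff$, and on the other side, the isotopy morphism from $\bb$ to itself, which by the argument in the final paragraph of the proof of Proposition~\ref{isotlem} is the identity. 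The hard part will be exactly this composition step: one must control all the new degenerations introduced by $\fm_0$-insertions and by annoying curves along both slide strips, mirroring Lemma~\ref{cylchoices} but with two stacked slide regions and two canonical generators simultaneously present, and show that offending terms cancel in pairs via an appropriate symmetry of the defining manifold and the cycle property of $\Theta$ and $\Theta'$. A secondary, but genuine, technical point is verifying that the construction of $\Theta$ is robust when $\gamma_1^H$ is immersed after a single slide: the ``nearest intersection'' rule must be interpreted in $\Sigma$ rather than on a resolution, and the cycle check then relies on admissibility of $(\Sigma,\Gamma^H)$, which is guaranteed up to admissible isotopy by Proposition~\ref{isotlem}.
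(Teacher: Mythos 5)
Your proposal only slides one of the two families of circles, and this is a genuine gap that would prevent the morphism from landing in the correct target. Recall from Section~\ref{spaces} that a generator of $\cP$ is a $k$-tuple of intersection points between $\Gamma^{t_1}$ and $\Gamma^{t_2}$ (i.e., between $\ba$ and $\bb$), \emph{both} of which are perturbed copies of the same underlying $\Gamma$. When $\Gamma$ slides to $\Gamma^H$, both $\alpha_1$ and $\beta_1$ must follow, so a representative of a generator of $\cP^H$ is an intersection tuple between $\ba^H$ and $\bb^H$. Your $(n+2)$-gon $W^{\mathrm{slide}}_{n+2}$ has one extra corner carrying $\Theta\in\cP(\Sigma,\bb,\bb^H)$, so the outgoing corner lies between the edges decorated by $\ba$ and $\bb^H$. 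The output is therefore a $k$-tuple of intersections between $\ba$ (a perturbation of $\Gamma$) and $\bb^H$ (a perturbation of $\Gamma^H$), which is not a main-perturbation pair and is not $\cP^H$. In the standard Heegaard--Floer setting this does not arise because $\ba$ and $\bb$ are independent, but here the $\alpha$- and $\beta$-circles are tied together as two pushoffs of a single $\Gamma$. The paper fixes this by composing two thin-triangle maps inside the insertion: $\varphi=\varphi_{\bb^H\ba\ba^H}(\varphi_{\bb^H\bb\ba}(\bth_{\bb^H\bb},-),\bth_{\ba\ba^H})$, sliding $\bb$ and then $\ba$, so that the output is honestly in $\cP(\Sigma,\ba^H,\bb^H)$. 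Any version of your construction would need a second $\Theta$-type insertion along $\ba\cap\ba^H$, after which the polygon is no longer an $(n+2)$-gon with a single fixed corner but essentially the concatenated pair of triangles (or square) that appears in the paper.

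A secondary issue: you count all index-$0$ curves rather than restricting to thin homology classes. The paper deliberately restricts the triangle and square maps to thin classes $\mathfrak{T}$ so that (a) it need never verify admissibility for the relevant triple or quadruple diagrams, and (b) the cycle property of the $\bth$-generators can be checked by an enumeration of explicit small bigons and annuli (Claim~\ref{thetacycles}), which in turn drives the cancellation of thick degenerations by the neck-stretching correspondence $\Phi$ (Claim~\ref{cancelthick1}). Without the thin restriction you would need a separate admissibility argument for each polygon diagram and a more delicate treatment of which classes contribute near $\Theta$, and in particular your brief ``small-area and positivity argument'' for $\Theta$ being a cycle does not address the extra index-one thin classes such as $E_1^H+E_2^H$ and $E_1^H+D_n^H$ that the paper must analyze explicitly. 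Finally, note that the paper's slide morphism $\ff_n$ (Equation~\ref{eq:slidemap}) is built by inserting $\varphi$ in place of interpolation strips in the manifolds $W^p_n$ from Section~\ref{isotopy}, not by a direct polygon count with one fixed corner; while these two pictures should agree after gluing, the gluing structure matters for the homotopy argument in Sublemma~\ref{sublem}, where the cancellation of the two middle triangles is carried out by a square homotopy $h$.
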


It seems possible to define maps that take care of the entire sequence of slides that comprises a multislide, handleslide or shift at once, but this paper takes the route suggested by Lemma~\ref{slideinv} for ease of exposition and to illustrate the robust nature of the construction of $(\cP,\fm)$. The idea is to use a restricted version of the Heegaard-Floer triangle maps for a single slide (or pair of slides) first applied to $\bb$, then the same applied to $\ba$; the result is a map sending generators to those in an embellished diagram that came from performing the same upon $(\Sigma,\Gamma)$. The maps are concocted so that they come with an inverse $\cP^H\to\cP$ and have the appropriate composition properties to define an $A_\infty$ homotopy equivalence.

For $0\leq t_1<t_2<t_3\leq1$, define $\ba=\Gamma^{t_3}$, $\bb=\Gamma^{t_2}$ and $\bb'=\Gamma^{t_1}$. Define $\bb^H$ as the result of sliding $\beta_1$ over $\beta_n$ (and possibly also another $\beta_{n'}$, disjoint from $\beta_n$), perturbed to be transverse to $\alpha_i,\beta_i,\beta_i'$ (see Figure~\ref{circles}). As always, assume all the perturbations are small enough so that there are obvious bijections between representatives of generators in $(\Sigma,\ba,\bb^H)$, $(\Sigma,\bb,\bb^H)$ and $(\Sigma,\bb^H,\bb')$. For each of these triples, the perturbations can be chosen so that there are obvious domains which are analogous to $D_i,E_i,T_i,D_i^H$ and $E_i^H$ in \cite[Figure~11]{L} (and also generators $\mathbf{\theta}_{\beta\beta'}=\{\theta_1,\ldots,\theta_k\}$, $\mathbf{\theta}_{\beta^H\beta'}=\{\theta'_1,\ldots,\theta'_k\}$ and $\mathbf{\theta}_{\beta\beta^H}=\{\theta^H_1,\ldots,\theta^H_k\}$ coming from their corners). Note that all of these domains can be made to simultaneously have arbitrarily small area by choosing sufficiently small perturbations and by appropriately performing the slide. \begin{figure}\capstart\begin{center}\includegraphics[width=\linewidth]{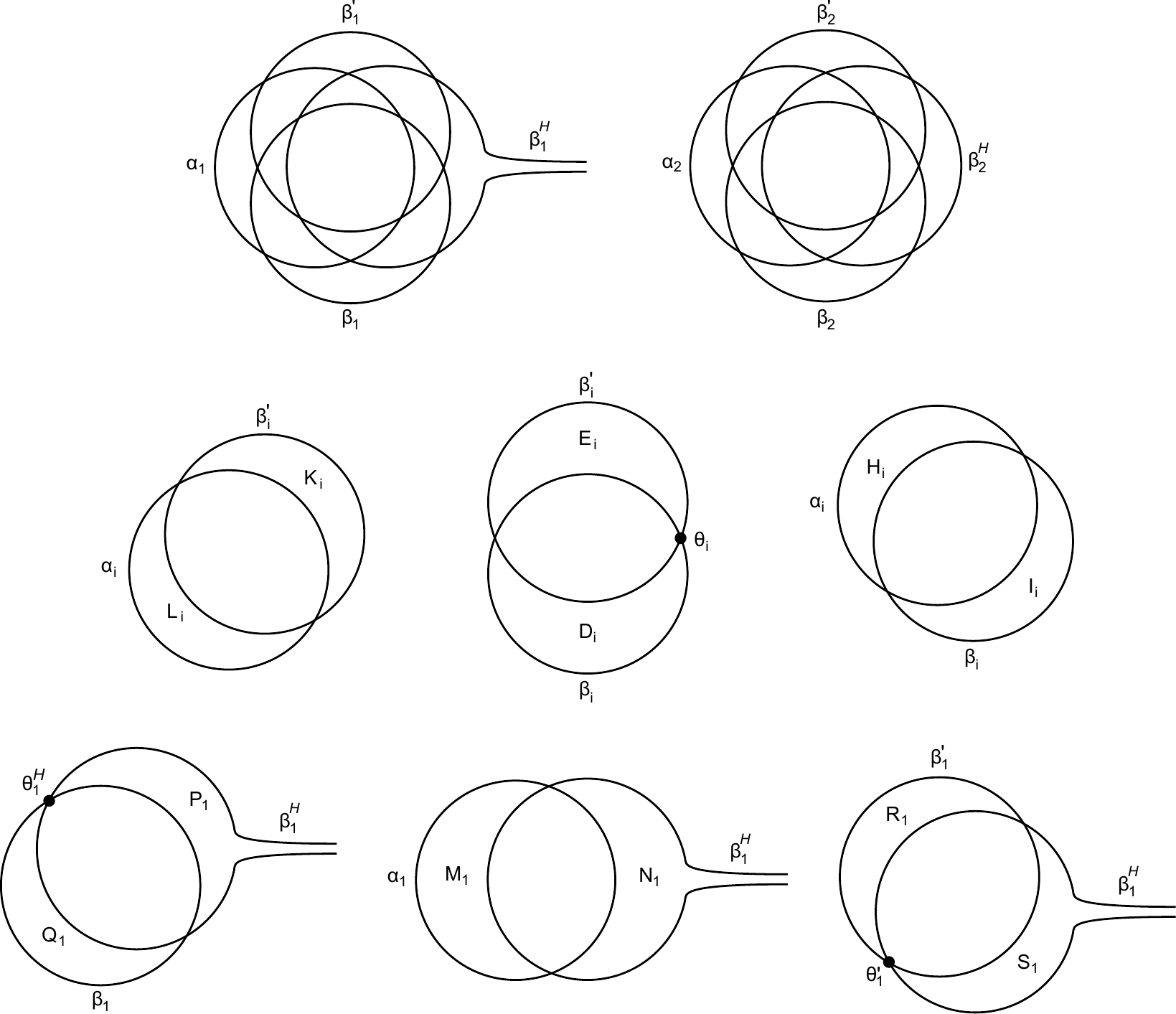}\end{center}\caption{Circles and domains mentioned in the proof of Proposition~\ref{lil11.2}. For $i>1$ the domains and circles are like those for $i=2$. The domains $P_i,N_i,S_i$ are analogous to the domains $E_i^H$ in \cite[Figure~11]{L}. There are analogous pictures for a slide of $\alpha_1$ over $\alpha_n$.}\label{circles}\end{figure} 

The maps below are inspired by the Heegaard-Floer triangle maps, though they count a restricted class of curves (those with domains mentioned above) that in the three-dimensional theory turn out to be the only classes to count, at least for handleslide maps. For sets of circles $\boldsymbol{a}_1,\ldots,\boldsymbol{a}_n$, let $\Delta_n$ denote the unit disk in $\C$ with $n$ boundary punctures and define 
\[W_{\boldsymbol{a}_1,\ldots,\boldsymbol{a}_n}=:(\Sigma\times\Delta_n,\{\boldsymbol{a}_i\times e_i:i=1,\ldots,n\}),\] 
where $e_i$ denotes the $i^{th}$ edge of $\Delta_n$ as in Figure~\ref{polygon}.

Equip $W_{\bb'\bb^H\bb\ba}$ with a family of almost complex structures satisfying conditions (1)-(7) in \cite[Section~10.6.2]{L}. As discussed there, it is known that $W_{\bb'\bb^H\bb\ba}$ can be viewed as one element of a one-dimensional moduli space of squares, and that the two ends of this moduli space correspond to degenerations into the pairs $(W_{\bb^H\bb\ba},W_{\bb'\bb^H\ba})$ and $(W_{\bb'\bb^H\bb}, W_{\bb'\bb\ba})$. For each of these triangles there is an almost complex structure satisfying conditions (\textbf{J}$'\boldsymbol{1}$)-(\textbf{J}$'\boldsymbol{4}$) in \cite[Section~10.2]{L} coming from the degeneration of $W_{\bb'\bb^H\bb\ba}$. For the square and all triangles, the definitions and existence of admissible triple and quadruple diagrams could be realized analogously to those in \cite{L}, but they are irrelevant to the maps defined in this section because of the restrictions already placed on the homology classes of triangles and squares under consideration.
\begin{df}\label{thin}Let $\mathfrak{T}(\x,\y)$ (resp., $\mathfrak{T}(\x,\y,\z)$ and $\mathfrak{T}(\x,\y,\z,\w)$) denote the Abelian group generated by the \emph{thin} homology classes: those elements of  $\pi_2(\x,\y)$ (resp., $\pi_2(\x,\y,\z)$ or $\pi_2(\x,\y,\z,\w)$) whose support is contained in the regions labeled in Figure~\ref{circles}  (this includes the analogous domains $E'_i,D_i'$ etc. if $\beta_1$ also slides over $\beta_{n'}$). The parts labeled $P_1,N_1,S_1$ are  annuli analogous to $E_1^H$ in \cite[Figure~11]{L}.\end{df}
Because of the way these domains are defined, it is possible to make their total area arbitrarily small in comparison to each of the remaining domains simultaneously. For this reason, $\fm_n$ decomposes into a thin part and a thick part, $\fm_n=\fm_n^{thin}+\fm_n^{thick}$, where $\fm_n^{thin}$ counts curves in thin classes in $\pi_2(\arx,\y)$ for each $\y$, while $\fm^{thick}_n$ counts curves in $\pi_2(\arx,\y)\setminus\mathfrak{T}(\arx,\y)$. There is a similar decomposition $\fm_n^H=\fm_n^{H,thin}+\fm_n^{H,thick}$. Subscripts will appear when it is necessary to refer to the thin classes in a particular space. 

For various $k$-tuples of circles $\boldsymbol{a},\boldsymbol{b},\boldsymbol{c}$ chosen from $\ba,\ \bb,\ \bb^H$ and $\bb'$, let $\cP(\Sigma,\boldsymbol{a},\boldsymbol{b})$ denote the $\Z_2$ vector space of $k$-tuples of intersection points between circles, analogous to the definition of generator representatives for $\cP$ given a surface diagram. There are maps 
\[\varphi_{\boldsymbol{a}\boldsymbol{b}\boldsymbol{c}}\co\cP(\Sigma,\boldsymbol{a},\boldsymbol{b})\times\cP(\Sigma,\boldsymbol{b},\boldsymbol{c})\to\cP(\Sigma,\boldsymbol{a},\boldsymbol{c})\]
defined on pairs of $k$-tuples $(\x,\y)$ by the formula
\[\varphi_{\boldsymbol{a}\boldsymbol{b}\boldsymbol{c}}(\x,\y	)=\sum\limits_\z\sum\limits_{\substack{A\in\mathfrak{T}_{abc}(\x,\y,\z)\\\ind A=0}}\left(\#\mathcal{M}^A\right)\z.\]
The moduli spaces $\cM^A$ appearing in this construction are completely analogous to those used in the definition of $\fm_2$: we require $\Sigma\times\Delta_3$ to have an admissible almost complex structure, use Lagrangians $\boldsymbol{a}\times e^{1/3}$, $\boldsymbol{b}\times e^{2/3}$, $\boldsymbol{c}\times e^{3/3}$, and the moduli spaces $\cM^A$ are required to satisfy conditions (\hyperlink{m0}{{\bf M0}})-(\hyperlink{m6}{{\bf M6}}). It is necessary to compose two of these maps to get a map $\varphi\co\cP\to\cP^H$:
\begin{equation}\label{eq:varphi}\varphi(\x)=\varphi_{\bb^H\ba\ba^H}\left(\varphi_{\bb^H\bb\ba}(\bth_{\bb^H\bb},\x),\bth_{\ba\ba^H}\right)\end{equation}
This is the composition of triangle maps corresponding to sliding $\beta_1$ over $\beta_n$, then sliding $\alpha_1$ over $\alpha_n$: the input for a curve counted by $\varphi$ is a $k$-tuple of intersection points between $\ba$ and $\bb$, and the output is a $k$-tuple of intersection points between $\ba^H$ and $\bb^H$. The slide maps $\ff_n\co\cP^{\times n}\to\cP^H$ are built similarly to the isotopy map in Section~\ref{isotopy}, using maps $\varphi$ in place of interpolation maps:
\begin{equation}\label{eq:slidemap}\ff_n=\sum\limits_{i+j=n+1}\sum\limits^{n-j+1}_{\ell=1}\fm_i^H\left(\varphi(\x_1),\ldots,\varphi(\x_{\ell-1}),\varphi\left(\fm_j\left(\x_\ell,\ldots,\x_{\ell+j-1}\right)\right),\varphi(\x_{\ell+j}),\ldots,\varphi(\x_n)\right).\end{equation}
In this sum, the parameters $t_1$, $t_2$ defining the cylinders $\ba,\bb$ (and in turn the cylinders $\ba^H,\bb^H$) depend on what corner of $W_{n+1}$ or $W^H_{n+1}$ takes part in the concatenation: to streamline notation we assume $t_1$ and $t_2$ are chosen for each triangle in such a way that all cylinders meet smoothly. Here is another map that turns out to be a homotopy inverse for $\ff$: define $\tilde{\varphi}\co\cP^H\to\cP$,
\[\tilde{\varphi}(\x)=\varphi_{\bb\bb^H\ba}\left(\bth_{\bb\bb^H},\varphi_{\bb^H\ba^H\ba}\left(\x,\bth_{\ba^H\ba}\right)\right),\]
then define $\tilde{\ff}_n\co(\cP^H)^{\times n}\to\cP$ by replacing $\varphi$ with $\tilde\varphi$ and switching $\fm,\fm^H$ in Equation~\ref{eq:slidemap}. See Figure~\ref{varphi} for an illustration of the triangles used in the definitions of $\ff$ and $\tilde\ff$.
\begin{figure}[h]\capstart\begin{center}\includegraphics{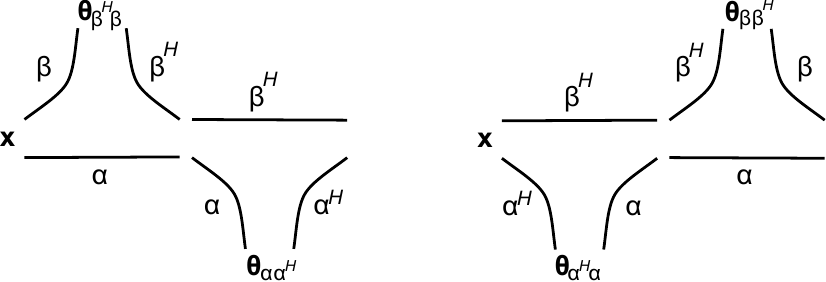}\end{center}\caption{Triangles used to define the map $\varphi(\x)$ at left and $\tilde{\varphi}(\x)$ at right, labeled with Lagrangians on the sides and limiting behavior at corners. The outputs are at the unlabeled corners. These are the spaces that replace interpolation strips in the definition of the slide maps.\label{varphi}}\end{figure}
\begin{lemma}\label{chainmaps}\ \begin{enumerate}
\item The slide maps $\ff,\tilde\ff$ are finite sums when $(\Sigma,\Gamma)$ is admissible.
\item The slide maps $\ff,\tilde\ff$ are $A_\infty$ morphisms.\end{enumerate}\end{lemma}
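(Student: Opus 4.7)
For part (1), the plan is to show that each of the three ingredients used to build $\ff_n$ contributes only finitely many terms to the sum, so the sum itself is finite. First, I would observe that the triangle maps $\varphi_{\bb^H\bb\ba}$ and $\varphi_{\bb^H\ba\ba^H}$ only count curves in thin homology classes, and by the definition of $\mathfrak{T}(\x,\y,\z)$ as the group generated by the finite list of domains labeled in Figure~\ref{circles}, there are only finitely many such classes in each fixed $\pi_2(\x,\y,\z)$ compatible with a prescribed output. Hence $\varphi(\x)$ is a finite sum of generators for each $\x$. Next I would invoke Lemma~\ref{finitesums} to conclude that each $\fm_j$ and $\fm_i^H$ applied to a fixed input tuple outputs only finitely many terms, using that $(\Sigma,\Gamma^H)$ remains admissible (slides preserve admissibility up to an admissible isotopy, which is already covered in Section~\ref{isotopy}). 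Composing a finite number of finite sums yields a finite sum, giving (1). The argument for $\tilde\ff$ is identical.

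For part (2), the plan is the usual cobordism argument: exhibit the two sides of the $A_\infty$ morphism equation~(\ref{eq:morphismrlns}) as counts of the two ends of a one-dimensional moduli space of holomorphic curves in a family of model manifolds, each obtained by gluing copies of $W_j$ and $W_i^H$ along triangle regions in place of interpolation strips. Specifically, for a fixed $n$ and fixed input $(\x_1,\ldots,\x_n)$ and output $\y$, I would enumerate the glued manifolds $W_n^1,\ldots,W_n^{M(n)}$ whose boundary-punctured corners are decorated with the triples of circle-sets $(\bb^H,\bb,\ba)$, $(\bb^H,\ba,\ba^H)$ arising in the definition of $\varphi$, and consider the moduli space $\cM_1^{n,p}(\arx,\y)$ of index one curves satisfying (\hyperlink{m0}{{\bf M0}})-(\hyperlink{m6}{{\bf M6}}). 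By the compactness analysis of Theorem~\ref{cptthm}, adapted to this hybrid setting exactly as in Lemma~\ref{isotmorlem}, the boundary of this one-dimensional moduli space consists of height-two buildings obtained by level splitting either inside the polygon region (producing the LHS of (\ref{eq:morphismrlns}), i.e., $\ff$ applied to an $\fm$-output) or across a triangle-interface (producing the RHS, i.e., $\fm^H$ applied to several $\ff$-outputs, with extra $\theta$-generators absorbed by the thin triangle identities).

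The main obstacle will be ensuring that the ``extra'' boundary degenerations specific to this setting — namely those where a slit propagates into one of the triangle regions $\Sigma\times\Delta_3$ used to define $\varphi$ — contribute only terms of the expected form. Here I would invoke the area argument built into Definition~\ref{thin}: the thin domains can be made simultaneously of arbitrarily small area, so any non-thin curve in a triangle region would have area bounded below while the total area along a path in the one-dimensional moduli space is controlled, forcing degenerations at the triangle corners to split off thin triangles with canonical generator outputs ($\bth_{\bb^H\bb}$, $\bth_{\ba\ba^H}$, etc.). The remaining degenerations that do not match terms in (\ref{eq:morphismrlns}) come in canceling pairs by an argument completely analogous to the square-degeneration analysis in \cite[Section~11]{L}: the one-parameter family of almost complex structures interpolating between the two ways to decompose $W_{\bb'\bb^H\bb\ba}$ provides the cobordism needed. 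Invariance under the various almost complex structure choices in the triangle regions is established exactly as in Lemma~\ref{cylchoices}.

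Finally, once these cancellations are verified for every $n$, the signed (mod 2) count of $\partial\bigsqcup_p\overline{\cM_1^{n,p}(\arx,\y)}$ equals zero, which is precisely the identity $(\ref{eq:morphismrlns})$ for $\ff$; the same argument applied to the mirror setup handles $\tilde\ff$.
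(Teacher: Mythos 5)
Your part (1) has a real gap. You assert that because $\mathfrak{T}(\x,\y,\z)$ is generated by a finite list of domains, ``there are only finitely many such classes in each fixed $\pi_2(\x,\y,\z)$ compatible with a prescribed output.'' This does not follow: a finitely generated Abelian group is typically infinite, and compatibility with a fixed output triple only pins down a torsor over the periodic subgroup, not a finite set. The paper instead applies admissibility to the thin classes themselves, observing that any two distinct elements of $\mathfrak{T}_{\alpha^H\alpha\beta^H}(\x,\y,\z)$ or $\mathfrak{T}_{\bb^H\bb\ba}(\x,\y,\z)$ have a difference with positive and negative coefficients, which combined with positivity of holomorphic domains bounds the count; the definition of the main perturbation is what makes this area argument work for the thin part. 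Your appeal to Lemma~\ref{finitesums} for the $\fm_j$ and $\fm_i^H$ factors is fine, but you also need to confirm that a slide preserves admissibility, which the paper notes explicitly.

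Your part (2) misses the actual architecture of the argument and substitutes a generic cobordism narrative that does not match how $\ff$ is defined. The definition of $\ff_n$ in Equation~\ref{eq:slidemap} is compositional (a double sum over $A_\infty$-style indices with $\varphi$'s inserted), not a direct count of curves in a single family of glued manifolds, so ``counting the two ends of a one-dimensional moduli space'' as you propose is not the right framing. The paper's proof has two specific technical steps you do not supply. First, Claim~\ref{thetacycles} establishes that $\boldsymbol{\theta}_{\beta\beta'}$ and $\boldsymbol{\theta}_{\beta\beta^H}$ lie in $\ker\fm_1^{thin}$; this is a nontrivial enumeration of the thin index-$1$ domains and their cancellations, and it is what makes the $\theta$-corners of the inserted triangles behave like closed inputs so that thin-part degenerations contribute nothing extraneous. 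You gesture at this by saying degenerations ``split off thin triangles with canonical generator outputs,'' but you never assert or prove the cycle property, which is the actual content. Second, and more seriously, the thick cancellation in Claim~\ref{cancelthick1} is not a cobordism in a single space: the left degenerations live in $W_{\alpha\beta}$ and the right degenerations live in $W_{\alpha^H\beta^H}$, and the paper constructs an explicit bijection $\Phi$ of thick homotopy classes and then uses a neck-stretching argument (degenerating $\Sigma$ at two circles near $\beta_n$ to split the curve into a genus $g-1$ piece and a sphere piece) to identify the moduli space counts. You omit $\Phi$, omit the neck-stretch, and cite the one-parameter family of almost complex structures for the square $W_{\bb'\bb^H\bb\ba}$, which the paper uses in Sublemma~\ref{sublem} for the homotopy inverse statement, not here. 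Without the $\Phi$ correspondence and neck-stretching, the equality of thick-class counts on the two sides of Equation~\ref{eq:morphismrlns} is unjustified.
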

\begin{proof}We give the details for $\ff$; the corresponding statements for $\tilde\ff$ follow from the same arguments. The proof of the finiteness claim is just as in Lemma~\ref{finitesums}: any distinct elements $A,A'\in\mathfrak{T}_{\alpha^H\alpha\beta^H}(\x,\y,\z)$ or $\mathfrak{T}_{\bb^H\bb\ba}(\x,\y,\z)$ will have a difference of domains with positive and negative coefficients if $(\Sigma,\Gamma)$ is admissible, and the counts in the other polygons are finite by Lemma~\ref{finitesums} and the fact that a slide does not destroy admissibility.

It remains to show $\ff$ is an $A_\infty$ morphism. An outline of the argument is that $\boldsymbol{\theta}_{\beta\beta^H}$ and $\boldsymbol{\theta}_{\alpha\alpha^H}$ are elements of $\ker\fm_1^{thin}$ in a certain sense (Claim~\ref{thetacycles}), enabling the square used in the definition of $\varphi$ to substitute for an interpolation strip when only counting curves in thin classes. It may add perspective to note that for the handleslide maps of Heegaard-Floer theory, the only region in the diagram that is necessarily non-thin has the base point. Then the remaining coefficients in Equation~\ref{eq:morphismrlns} from thick curves cancel in a natural way (Claim~\ref{cancelthick1}).

Let $\fm_1^{thin,\beta\beta'}$ and $\fm_1^{thin,\beta\beta^H}$ denote the analogues of $\fm_1$ using the cylinders $\bb,\bb'$ and $\bb,\bb^H$ in $\Sigma\times\Delta_2$, but only counting holomorphic curves in thin classes.
\begin{claim}\label{thetacycles}The $k$-tuples of intersection points $\boldsymbol{\theta}_{\beta\beta'}$ and $\boldsymbol{\theta}_{\beta\beta^H}$ are in $\ker\fm_1^{thin,\beta\beta'}$ and $\ker\fm_1^{thin,\beta,\beta^H}$, respectively (and similar for $\bth_{\ba^H\ba'},\bth_{\ba\ba^H}$, and $\bth_{\bb^H\bb}$).\end{claim} 
\begin{proof}[Proof of Claim~\ref{thetacycles}]The first step is to explain why the index 1 classes counted in $\fm_1^{thin,\beta\beta^H}(\boldsymbol{\theta}_{\beta\beta^H})$ are precisely $D_1^H,\ldots,D_k^H,E_1^H,\ldots,E_k^H,E_1^H+E_2^H$ and $E_1^H+D_n^H$. First observe that any thin domain $\varphi$ (other than $E_1^H+E_n^H$ and $E_1^H+D_n^H$, treated separately) projects to a union of arcs in the $\beta_i$ under the obvious projection to $\beta_i$: the interiors of $E_i^H$ and $D_i^H$ project to disjoint arcs in $\beta_i$ whose common endpoints are in $\beta_i\setminus\cup_{j\neq i}\beta_j$. If $\varphi$ projects to an arc that runs along more than one $\beta$-circle before it ends, then its projection must turn right or left at some intersection between $\beta$-circles. But it is straightforward to use Proposition~\ref{indexprop} to see that this adds +1 to the index for each turn, and the thin domains with zero turns, namely the ones listed above, have index 1. It is also known that for $i>1$ the bigons $E_i^H$ and $D_i^H$ make canceling contributions to the coefficient of $\eta_i^h$ in $\partial_0(\boldsymbol{\theta}_{\beta\beta^H}$. Now suppose $\varphi$ is a thin domain other than $D_1^H,\ldots,D_k^H,E_1^H,\ldots,E_k^H$, with $n_{\theta_1^H}(C)=1/4$ (for example, $E_1^H+E_n^H$ or $E_1^H+D_n^H$). Traveling away from $\theta_1^H$ along the $\beta_1$ or $\beta_1^H$ boundary component of $C$, it is clear that one cannot reach a corner before reaching $\eta_1^H$ without violating the condition of being thin, so that $C$ must be either $E_1^H+E_2^H$ or $E_1^H+D_n^H$, and similar arguments to those in \cite{L} (starting with Sublemma~11.5) that exactly one of $\widehat{\mathcal{M}}^{E_1^H+E_2^H}$ and $\widehat{\mathcal{M}}^{E_1^H+D_n^H}$ contributes $+1$ and the other 0 to the generator $(\eta_1^H,\theta_2^H,\ldots,\theta_k^H)$). As with the other domains, this cancels with the contribution of $D_1^H$ for an appropriate choice of orientation system. The argument for $\partial_0(\boldsymbol{\theta}_{\beta\beta'})=0$ is the same as for $\boldsymbol{\theta}_{\beta\beta^H}$, except the case for $i=1$ is addressed the same as for $i>1$.\end{proof}
Claim~\ref{thetacycles} (and the obvious analogue for the pairs $\ba,\ba'$ and $\ba,\ba^H$) implies $\ff$ is an $A_\infty$ morphism with respect to $\fm^{thin},\fm^{H,thin}$ because the part of it concerning $\bth_{\ba\ba^H}$ and $\bth_{\ba^H\ba}$ implies the contributions to Equation~\ref{eq:morphismrlns} coming from the boundary of the relevant one-dimensional moduli spaces are restricted to those one would see if the squares corresponding to the inserted maps $\phi$ had no ends but those that took part in the concatenations. It remains to verify Equation~\ref{eq:morphismrlns} for thick domains.

We go through the argument in detail for $n=1$; with that understood, the general case is not hard to understand. For $n=1$, Equation~\ref{eq:morphismrlns} reads
\[\ff_2(\x,\fm_0)+\ff_2(\fm_0,\x)+\ff_1(\fm_1(\x))=\fm^H_1(\ff_1(\x)).\]
The coefficient of the generator $\w$ in the left side is a count of height two holomorphic buildings of three kinds, coming from the three degenerations of index one maps into $W_{\alpha\beta}$:\begin{itemize}
\item $\ff_2(\x,\fm_0)$ is a count of degenerations according to an arc in $W_{\alpha\beta}$ whose endpoints are in $\Sigma\times\{0\}\times\R$,
\item $\ff_2(\fm_0,\x)$ is a count of degenerations according to an arc in $W_{\alpha\beta}$ whose endpoints are in $\Sigma\times\{1\}\times\R$, and
\item $\ff_1(\fm_1(\x))$ is a count of degenerations according to an arc with one endpoint on each side of $W_{\alpha\beta}$. These consist of a thick index one strip in $\pi_2(\w',\w)$ and a thin square counted by $\ff_1$. (call these \emph{left degenerations}).\end{itemize}
The terminology comes from the idea of drawing the manifolds used in the definition of $\ff$ like in Figure~\ref{isotex} with all inputs at the left and all outputs at the right. In this case, the left side is everything whose output is concatenated with a manifold corresponding to $\varphi$, and the right side is everything with an input concatenated to the output of a manifold corresponding to $\varphi$. The first two types of degenerations cancel because $\fm_0$ is unchanged by the arbitrarily small perturbation sending $\ba$ to $\bb$. The coefficient of $\w$ coming from the right hand side is a count of height two holomorphic buildings consisting of a thin square counted by $\ff_1$ concatenated at its output with a thick index one strip in $W_{\alpha^H\beta^H}$ (call these \emph{right degenerations}).
\begin{figure}\capstart\begin{center}\includegraphics[width=\linewidth]{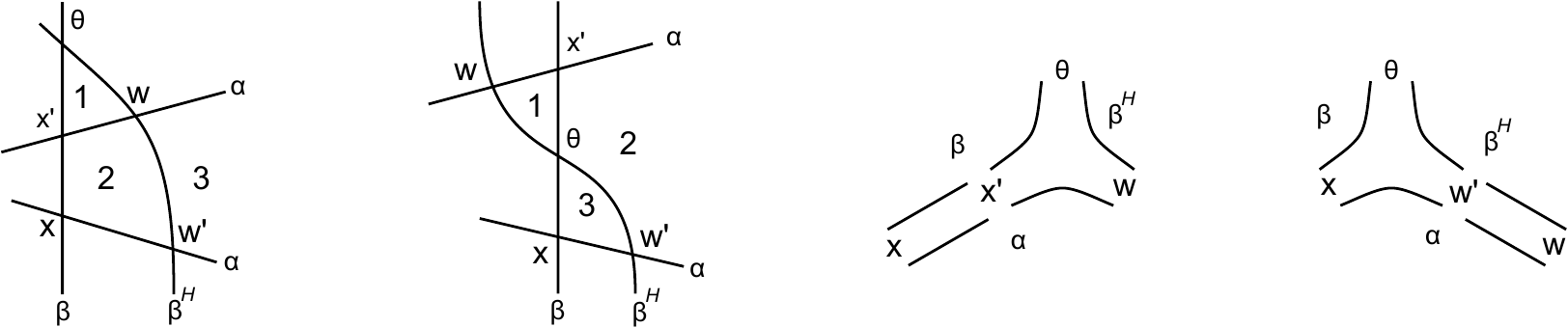}\end{center}\caption{The correspondence between domains of left and right degenerations can be given by considering what happens near the domains of the triangles, denoted with parentheses. The correspondence $\Phi$ between strips sends $(1)+2+3$ to $(1+2)+3$ in the first case, while it sends $(1)+2+3$ to $1+2+(3)$ in the second. The triangle $W_{\bb^H\bb\ba}$ is depicted with strips and boundary conditions labeled for readers' convenience.\label{corrfig}}\end{figure}
\begin{claim}\label{cancelthick1}There is a correspondence of right and left degenerations in thick classes such that $\ff_1(\fm_1(\x))+\fm^H_1(\ff_1(\x))=0$.
\end{claim}
\begin{proof}After an application of Lemma~\ref{isotlem}, without loss of generality assume each slide occurs along an embedded arc $\delta\co[0,1]\rightarrow\Sigma$, connecting $\beta_1$ to $\beta_n$, such that $\delta$ is disjoint from $\ba,\bb,\bb'$ and $\bb^H$ except at its endpoints. There is a linear bijection \[\Phi\co\pi_2^{\alpha\beta}(\x,\y)\to\pi_2^{\alpha\beta^H}(f_{\bb^H\bb\ba}(\x),f_{\bb^H\bb\ba}(\y))\] which appears as in \cite[Figure~41]{OSS} near the pair of pants given by the slide, adds or subtracts bigons in $\mathfrak{T}_{\beta\beta^H}$ to switch from $\bb$ boundary components to $\bb^H$ boundary components, and otherwise appears as in Figure~\ref{corrfig}. According to the same argument as for \cite[Lemma~7.21]{OSS}, $\ind{D}=\ind(\Phi(D))$ using the index formula.

The last step is to show that, for any classes $[u_L]$ with domain $D$ and $[u_r]$ with domain $\Phi(D)$ for left and right degenerations,\begin{equation}\#\mathcal{M}^{[u_L]}=\#\mathcal{M}^{[u_R]}.\label{correq}\end{equation}
Note that the image of $\Phi$ gives a complete account of the classes that could have representatives given by right degenerations, because any domain not in the image of $\Phi$ necessarily has a corner of the form $f_i$ as in \cite[Figure~41]{OSS}, but such corners are not corners of thin triangles, so such generators are not in the image of $f_{\bb^H\bb\ba}$. For the proof of Claim~\ref{cancelthick1} it remains to establish Equation~\ref{correq} using a neck-stretching argument.

\begin{figure}\capstart\begin{center}\includegraphics[width=0.8\linewidth]{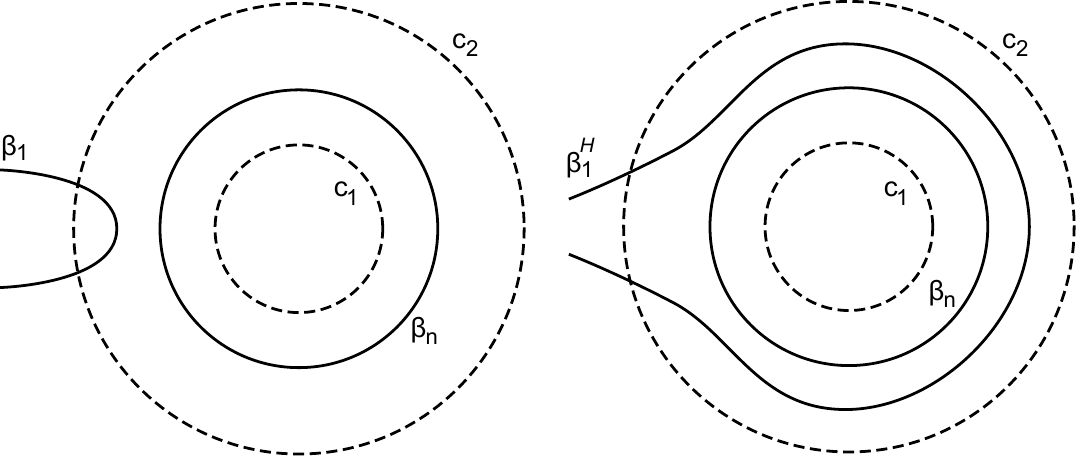}\end{center}\caption{The dotted circles give the location of necks inserted into various versions of $\Sigma\times\Delta_n$.\label{neck}}\end{figure}
It is possible to choose the two dotted circles $c_1,c_2$ in Figure~\ref{neck} close enough to $\beta_n$ so that all other circles in $\ba,\bb,\bb'$ and $\bb^H$ appear (if at all) as radial arcs within the annulus bounded by $c_1$ and $c_2$, with the exception of $\beta_1$ and $\beta_1^H$ as pictured there. Using the construction appearing \cite[Appendix A.2]{L} and gluing Propositions~A.1 and A.2 from the same paper, insert a neck of length $R$ (the two cylindrical manifolds inserted at each circle have length $R$) at $(c_1\cup c_2)\times[0,1]\times\R$ in both $W_{\alpha\beta}$ and $W_{\alpha\beta^H}$, and let $R\to\infty$. The limit spaces are denoted $W_{\alpha\beta}^\infty$ and $W_{\alpha\beta^H}^\infty$, both given by $\Sigma^\infty\times[0,1]\times\R$, where $\Sigma^\infty$ is a closed, orientable genus $g-1$ surface identified with a sphere at two points. For an index 1 homology class $A\in\pi_2^{\alpha\beta}(\x,\x')$, the corresponding homology class in $W_{\alpha\beta}^\infty$ breaks into two pieces that can be treated separately, one with representatives mapping to $\Sigma_{g-1}\times[0,1]\times\R$ and the other with representatives mapping to $S^2\times[0,1]\times\R$, both with marked points coming from intersections with the neck. The same is true for $\Phi(A)$ in $W_{\alpha\beta^H}^\infty$. The moduli spaces for $A$ and $\Phi(A)$ coming from the $\Sigma_{g-1}$ pieces are homeomorphic because the corresponding domains differ by an arbitrarily small perturbation of $\bb$ circles to $\bb^H$ circles within $\Sigma_{g-1}$ (this correspondence could be given by a flow in $W_{\alpha\beta^H}^\infty$ inherited from an arbitrarily small Hamiltonian $\Sigma_g\to\R$ that vanishes near the dotted circles, sending $\bb$ to $\bb'$). Finally, one may identify the moduli spaces for the pieces mapping to $S^2\times[0,1]\times\R$ using a rather indirect argument: the domains involved are exactly what would appear in the analogous neck-stretching construction for a Heegaard diagram (whose base point lies away from $\beta_n$) that undergoes a handleslide in Heegaard-Floer homology. The only possible exceptions are path components of curves whose domains are bounded entirely by radial $\beta$ circles, but these will be seen to have appropriate counts as well, since the counts in such classes are the same whether they are bounded by $\alpha$ circles, which could occur in Heegaard diagrams, or by $\beta$ circles. The existence of a homeomorphism between moduli spaces in that situation follows from the fact that the handleslide triangle map yields the chain map $CF^\infty(Y_{\alpha\beta})\to CF^\infty(Y_{\alpha\beta^H})$. This concludes the proof of Claim~\ref{cancelthick1}.\end{proof}
The two claims establish Equation~\ref{eq:morphismrlns} for $n=1$. For $n>1$, using the thin squares used to define $\varphi$ instead of interpolation strips, there are manifolds $W^p_n$ and moduli spaces $\cM_1^{n,p}(\arx,\y)$ analogous to those in Lemma~\ref{isotmorlem}. Now in addition to the left and right degenerations there are other boundary points, which we will also call left and right degenerations, coming from level splitting according to arcs that are disjoint from the squares used to define $\varphi$. Here, as before, the neck stretching argument (which was independent of the number of inputs) results in identical counts of holomorphic curves in homology classes corresponding under $\Phi$, and these give the count required by Equation~\ref{eq:morphismrlns}. This concludes the proof of Lemma~\ref{chainmaps}.\end{proof}
\begin{lemma}[c.f. Proposition~10.29 of \cite{L}]\label{chainh}The maps $\tilde{\ff}\circ\ff$ and $\ff\circ\tilde\ff$ are homotopic to the identity morphisms on $(\cP,\fm)$ and $(\cP^H,\fm^H)$, respectively, so that $\ff$ is a homotopy equivalence.
\end{lemma}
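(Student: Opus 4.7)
The plan is to construct the homotopies $\fH$ and $\fH^H$ by counting rigid curves in one-parameter families of moduli spaces, following the template of the proof of Proposition~10.29 in \cite{L}. By the symmetry between the constructions of $\ff$ and $\tilde\ff$, it suffices to produce $\fH$ exhibiting $\tilde\ff\circ\ff\sim\id_\cP$; the analogous argument for $\ff\circ\tilde\ff\sim\id_{\cP^H}$ is obtained by interchanging the roles of the pairs $(\ba,\bb)$ and $(\ba^H,\bb^H)$.

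First I would analyze the base case $n=1$. Unpacking Equation~\ref{eq:varphi} and its counterpart for $\tilde\varphi$, the composition $(\tilde\ff\circ\ff)_1(\x)=\tilde\varphi(\varphi(\x))$ counts configurations of four glued holomorphic triangles whose special corners are the four generators $\bth_{\bb^H\bb}$, $\bth_{\ba\ba^H}$, $\bth_{\ba^H\ba}$, $\bth_{\bb\bb^H}$. These four triangles assemble into a single holomorphic polygon in $\Sigma\times R$, where $R$ is a region whose boundary edges carry the sequence of circle sets $\bb,\bb^H,\bb,\ba,\ba^H,\ba$. I would then construct a one-parameter family of conformal structures on $R$ whose two degenerations are: at one end, the pair of squares expressing $\tilde\varphi\circ\varphi$ directly; at the other end, the merging of the two edges labeled by $\bb$ (and likewise by $\ba$) into single edges, reducing the polygon to a strip in $W_{\alpha\beta}$. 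A thin-domain analysis parallel to Claim~\ref{thetacycles} shows that under the second degeneration the only surviving contributions come from trivial disks $\{p\}\times\Delta_2$, giving the identity. The count of rigid curves over the family defines $\fH_1$.

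Next I would extend this to higher $\fH_n$ by inserting the region $R$ and its deforming family in place of interpolation strips, exactly as in the construction of $\ff$ in Equation~\ref{eq:slidemap}. For each placement of $R$ at a corner of $W_{n+1}$ (and each way of distributing inputs among the $\fm_i^H$ and $\fm_j$ in the outer algebra), we obtain a one-dimensional moduli space whose boundary enumerates precisely the terms appearing in Equation~\ref{eq:htpyrlns}: the ends where an $\fm$-strip breaks off on the incoming or outgoing side of $R$ give the $\fm_{j-\ell}$ and $\fm_{\ell+m+1}$ contributions, while the two degeneration ends of $R$ contribute $(\tilde\ff\circ\ff)_n(\arx)$ and $(\id_\cP)_n(\arx)$. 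Finiteness of all sums involved follows from admissibility, just as in Lemma~\ref{finitesums}.

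The main obstacle will be handling thick classes under the degenerations of $R$, analogous to Claim~\ref{cancelthick1}. For this I would apply the neck-stretching argument of that claim simultaneously along the two cylindrical necks appearing in $\varphi$ and in $\tilde\varphi$: the limits decompose each holomorphic curve into a piece in $\Sigma_{g-2}\times\Delta$ (where the small perturbations from $\bb$ to $\bb^H$ and from $\ba$ to $\ba^H$ yield homeomorphic moduli spaces by a Hamiltonian isotopy) and sphere pieces whose domains are exactly those appearing in the standard Heegaard-Floer verification that handleslide triangle maps compose to chain maps homotopic to the identity on $CF^\infty$. The bijection between the thick contributions to $\tilde\varphi\circ\varphi$ and the trivial strips counted by $\id$ therefore reduces to a known three-dimensional calculation, modulo the same caveat about path components bounded entirely by radial circles as in the proof of Claim~\ref{cancelthick1}.
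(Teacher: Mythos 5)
Your approach diverges from the paper's in a structural way, and the difference introduces a gap. The paper does not attempt a single hexagon degeneration. Instead it first establishes Sublemma~\ref{sublem}, which uses a one-parameter family of conformal structures on a \emph{square} (the middle two triangles of Figure~\ref{varphi}, with boundary Lagrangians $\bb^H,\ba,\ba^H,\ba'$) to homotope $\tilde\ff\circ\ff$ to the simpler map built from the two outer triangles $\varphi_{\bb\bb^H\ba}$ and $\varphi_{\bb^H\bb\ba}$ alone. It then repeats the same square-degeneration argument on the remaining pair of triangles to reach the triangle $\varphi_{\bb'\bb\ba}$, which counts small triangles and therefore acts as the identity on representatives of generators. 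Each step is a $4$-gon argument where the one-parameter family is the genuine interval $\cM_4\cong(0,1)$, with both ends being honest codimension-one Deligne--Mumford degenerations into pairs of triangles.

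The gap in your proposal is the claim that a one-parameter family of conformal structures on the hexagon $R$ (edges $\bb,\bb^H,\bb,\ba,\ba^H,\ba$) has ``the merging of the two edges labeled by $\bb$ (and likewise by $\ba$) into single edges, reducing the polygon to a strip in $W_{\alpha\beta}$'' as one of its ends. In $\overline{\cM_6}$, which is three-dimensional, shrinking the $\bb^H$ edge does not merge its neighboring $\bb$ edges: the two bounding marked points collide and a triangle bubbles off, so the degenerate fiber is a pentagon attached to a $3$-gon, a codimension-one stratum. Shrinking \emph{both} the $\bb^H$ and $\ba^H$ edges simultaneously is codimension two, and what results is a square (edges $\bb,\ba,\bb,\ba$) with two triangle bubbles attached, still not a strip. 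A generic path in $\cM_6$ cannot terminate at this stratum, and even a non-generic path that does still leaves the square with four corners, not two. So the claimed identification of that end with trivial $\Delta_2$-disks does not follow. To make the hexagon count work one would have to count hexagons at index $-3$ over all of $\cM_6$ and carefully account for every codimension-one face of $\partial\overline{\cM_6}$, which is a substantially more elaborate bookkeeping task than described and is not what the proposal sketches. The paper avoids this entirely by factoring the argument into two applications of the square case, each with only two (well-controlled) ends, plus a direct appeal to the small-triangle identification of $\varphi_{\bb'\bb\ba}$ with the identity via \cite[Proposition~11.3]{L} and \cite[Proposition~9.8]{OS}. Your final paragraph on neck-stretching for thick classes is in the right spirit (the paper leans on the same argument in Claim~\ref{cancelthick1}), but it doesn't repair the issue with the degeneration structure of the hexagon family.
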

This lemma mostly follows from the following result.
\begin{sublemma}\label{sublem}The map $\tilde{\ff}\circ\ff$ is homotopic to the map that results from inserting 
\[\varphi_{\bb\bb^H\ba}\left(\bth_{\bb\bb^H},\varphi_{\bb^H\bb\ba}\left(\bth_{\bb^H\bb},\x\right)\right)\]
instead of interpolation maps as with previous morphisms.\end{sublemma}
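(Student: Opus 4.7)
The plan is to reduce the sublemma to the statement that the intermediate ``round trip'' through $\ba^H$ appearing inside $\tilde\varphi \circ \varphi$ is chain-homotopic to the identity on $\cP(\Sigma,\bb^H,\ba)$, in a way compatible with insertion into the concatenation recipe. Expanding definitions and abbreviating $z=\varphi_{\bb^H\bb\ba}(\bth_{\bb^H\bb},\x)\in\cP(\Sigma,\bb^H,\ba)$, the insertion $\tilde\varphi\circ\varphi$ applied to $\x$ is $\varphi_{\bb\bb^H\ba}(\bth_{\bb\bb^H},z')$ with
\[z'=\varphi_{\bb^H\ba^H\ba}\!\left(\varphi_{\bb^H\ba\ba^H}(z,\bth_{\ba\ba^H}),\bth_{\ba^H\ba}\right),\]
while the simplified insertion is $\varphi_{\bb\bb^H\ba}(\bth_{\bb\bb^H},z)$. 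The two insertions therefore agree after the outer triangles $\varphi_{\bb^H\bb\ba}$ and $\varphi_{\bb\bb^H\ba}$, so I would first build a chain homotopy from $z\mapsto z'$ to $z\mapsto z$ and then conjugate by the outer triangles to get a homotopy between the two full insertions.

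For the central chain homotopy, I would run the standard quadrilateral associativity argument adapted to this setting (compare the proof of \cite[Proposition~10.29]{L}). Consider $W_{\bb^H\ba\ba^H\ba}=\Sigma\times\Delta_4$ with Lagrangian cylinders $\bb^H,\ba,\ba^H,\ba$ on the four boundary edges in cyclic order, equipped with an admissible almost complex structure in the sense of Section~\ref{spaces}. Counting rigid holomorphic 4-gons in thin classes (in the sense of Definition~\ref{thin}) with inputs $z,\bth_{\ba\ba^H},\bth_{\ba^H\ba}$ fixed at three corners and output at the fourth defines a map $\fH_0\co\cP(\Sigma,\bb^H,\ba)\to\cP(\Sigma,\bb^H,\ba)$. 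The moduli of conformal structures on $\Delta_4$ is one-dimensional, and the resulting one-parameter family of moduli spaces has two boundary ends, corresponding to the two ways of splitting the quadrilateral along a diagonal. One end breaks into the triangles $W_{\bb^H\ba\ba^H}$ and $W_{\bb^H\ba^H\ba}$ and contributes $z\mapsto z'$. The other end breaks into $W_{\bb^H\ba\ba}$ and $W_{\ba\ba^H\ba}$; the inner triangle computes the product $\varphi_{\ba\ba^H\ba}(\bth_{\ba\ba^H},\bth_{\ba^H\ba})$, which a direct count of thin classes (parallel to the analysis of $\ker\fm_1^{thin}$ in Claim~\ref{thetacycles}) shows equals a top-degree generator of $\cP(\Sigma,\ba,\ba)$; the outer triangle then evaluates to $z$ by the same small-area cancellation used in the proof of Lemma~\ref{chainmaps}. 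Summing the two boundary contributions yields the desired chain homotopy.

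To upgrade this to the full $A_\infty$ statement, I would build $\fH=(\fH_n)_{n\geq1}$ by inserting the 4-gon $W_{\bb^H\ba\ba^H\ba}$ in place of the intermediate composition $\varphi_{\bb^H\ba^H\ba}\circ\varphi_{\bb^H\ba\ba^H}$ at each occurrence among the glued manifolds used to compute $\tilde\ff\circ\ff$, and sum over all such insertions, in direct analogy with the construction of $\ff$ in Equation~\ref{eq:slidemap}. The boundary of the corresponding one-parameter families of moduli spaces decomposes into contributions that match Equation~\ref{eq:htpyrlns} term by term: the two diagonal degenerations of each 4-gon reassemble into $\tilde\ff\circ\ff$ and the simplified map on the left-hand side, while level splittings away from the 4-gon reassemble into the $\fm$- and $\fm^H$-terms involving $\fH$ on the right-hand side. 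The main obstacle is the thin-class identification $\varphi_{\ba\ba^H\ba}(\bth_{\ba\ba^H},\bth_{\ba^H\ba})$ with a top-degree generator of $\cP(\Sigma,\ba,\ba)$, which requires the same careful bookkeeping of small domains (including annular thin domains analogous to the $E_1^H$ appearing in Definition~\ref{thin}) as in Claim~\ref{thetacycles}; once this is established, the remaining boundary analysis is a routine extension of the compatibility arguments used in Lemma~\ref{isotmorcomp}.
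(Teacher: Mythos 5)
Your approach is essentially the same as the paper's: interpret the two middle triangles as a square (a count of rigid 4-gons in thin classes over a one-parameter family of conformal structures on $\Delta_4$), use the two boundary degenerations of the square to produce the two maps being compared, and telescope over all positions in the glued $A_\infty$ pictures. The key ingredient you identify---that the second degeneration produces the identity because the inner $\ba$-$\ba^H$-$\ba$ triangle outputs a ``top'' generator and the remaining small triangle is a unique rigid count---is exactly what the paper does via \cite[Proposition~11.3]{L} and the small-triangle argument.

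However, there is a real technical gap in your version: you consider $W_{\bb^H\ba\ba^H\ba}$ with the \emph{same} $k$-tuple $\ba$ appearing on two non-adjacent edges, and you speak of a ``top-degree generator of $\cP(\Sigma,\ba,\ba)$.'' Neither is well-defined as written. The group $\cP(\Sigma,\ba,\ba)$ requires transverse intersections between the two edge-families of circles, but $\ba$ is not transverse to itself; similarly, the degeneration you describe into $W_{\bb^H\ba\ba}$ and $W_{\ba\ba^H\ba}$ would have a node at a corner ``$\ba\cap\ba$,'' which has no meaning. In the paper's proof this is handled explicitly by perturbing the last edge's cylinders from $\ba$ to $\ba'=\Gamma^{t_1}$ (a nearby copy of $\ba$), working with the square $W_{\bb^H\ba\ba^H\ba'}$, and using the fact that $\varphi_{\ba\ba^H\ba'}(\bth_{\ba\ba^H},\bth_{\ba^H\ba'})=\bth_{\ba\ba'}$; the perturbation is harmless because it induces a bijection on generators, and it is also what preserves admissibility for the square. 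Your proof would go through once you insert this perturbation consistently in the quadrilateral, the theta-generator computation, and the small-triangle count.

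A second, milder point: when you upgrade to the full $A_\infty$ homotopy, summing over ``all such insertions'' glosses over the telescoping bookkeeping. In the paper the homotopy is defined by, for each addend in the $A_\infty$ relation, producing a collection $\{W_i\}$ indexed by the position $i$ at which the square is inserted, with positions before $i$ carrying the four-triangle insertion ($\tilde\ff\circ\ff$), position $i$ carrying the square $h$, and positions after $i$ carrying the two-triangle insertion (the simplified map). This asymmetric choice is what makes the $\fm$- and $\fm^H$-level-splittings reassemble into the terms required by Equation~\ref{eq:htpyrlns}. Your sketch would benefit from spelling this out, since without it the cancellation pattern on the boundary of the 1-dimensional moduli spaces is not automatic.
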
To clarify the intent of this sublemma, note that the to maps $\varphi$ appearing in the lemma correspond to the triangles that are at the far left and far right in Figure~\ref{varphi}. We are in effect claiming that the two inner triangles in some sense cancel each other.\begin{proof}First, note that in the composition there is a cancellation like in Lemma~\ref{isotmorcomp} that leaves precisely the manifolds that would be used in the definition of the morphism obtained by inserting the concatenation, from left to right, of the four triangles in Figure~\ref{varphi}. We now define the homotopy by interpreting the concatenation of the middle two triangles in that figure as a square, slightly perturbing the $\ba$ cylinders on the right side to be $\ba'$ cylinders to preserve admissibility (this is acceptable, because the perturbation induces a bijection of representatives of generators, so the resulting ``new'' maps, as maps on generators, are equal). More precisely, define the map
\[h\co\cP(\Sigma,\bb^H,\ba)\times\cP(\Sigma,\ba,\ba^H)\times\cP(\Sigma,\ba^H,\ba')\to\cP(\Sigma,\bb^H,\ba')\]
as a count of thin squares, given by the formula
\[h(\x,\y,\z)=\sum\limits_{\w}\sum\limits_{\substack{A\in\mathfrak{T}_{\bb^H\ba\ba^H\ba'}(\x,\y,\z,\w)\\ \ind A=-1}}\left(\#\mathcal{M}^A\right)\w,\]
Where the moduli space that appears is the union of the moduli spaces for a one-parameter family of almost complex structures on the square, analogous to the construction in \cite[Section 10.6.2]{L}. 

Consider the manifolds used in the isotopy map: For the addend
\begin{equation}\label{eq:addend}\fm_i\left(a_1,\ldots,a_{\ell-1},\fm_j\left(a_\ell,\ldots,a_{\ell+j-1}\right),a_{\ell+j},\ldots,a_n\right)\end{equation} 
in Equation~\ref{eq:ainftyrlns} there appears a $j$-tuple of interpolation strips, each one outputting an input for $\fm_j'$. We have been using them as placeholders for other manifolds; for example, in the slide morphism they were simultaneously replaced by the concatenation of two triangles. Enumerate these places $1,2,\ldots,j$ from top to bottom, so that, for example, the locations of the three strips in Figure~\ref{isotex} are labeled 1, 2, 3 going from top to bottom. Now for a given addend as in \ref{eq:addend}, instead of one manifold we define the collection $\{W_i\}_{i=1,\ldots,j}$ in which the element $W_i$ is constructed as follows.\begin{itemize}
\item The places indexed less than $i$ are given the concatenation of the triangles in Figure~\ref{varphi}, from left to right, that defines \[\varphi_{\bb\bb^H\ba'}\left(\bth_{\bb\bb^H},\varphi_{\bb^H\ba^H\ba'}\left(\varphi_{\bb^H\ba\ba^H}\left(\varphi_{\bb^H\bb\ba}\left(\bth_{\bb^H\bb},\x\right)\bth_{\ba\ba^H}\right)\bth_{\ba^H\ba'}\right)\right).\] (These are the manifolds that would be used to define $\tilde\ff\circ\ff$.)
\item Place $i$ gets the manifold that defines $\varphi_{\bb\bb^H\ba'}\circ h\circ\varphi_{\bb^H\bb\ba}$.
\item The places indexed greater than $i$ get the manifold that defines \[\varphi_{\bb\bb^H\ba}\left(\bth_{\bb\bb^H},\varphi_{\bb^H\bb\ba}\left(\bth_{\bb^H\bb},\x\right)\right)\]
(Recall these are the manifolds that would be used to define the other map mentioned in Sublemma~\ref{sublem})
\end{itemize}In this way, each addend \ref{eq:addend} of the $n^\text{th}$ $A_\infty$ relation contributes $j$ summands to the map
\[\fH_n\co\cP(\Sigma,\alpha,\beta)^{\times n}\to\cP(\Sigma,\alpha',\beta)\]
defined by summing over the collection $A(n)$ all addends in the $n^\text{th}$ $A_\infty$ relation, and over the collection $W(a)$ of manifolds defined for each addend $a\in A(n)$:
\begin{equation}\label{eq:homotopy}\fH_n(\arx)=\sum\limits_{a\in A(n)}\sum\limits_{W_i\in W(a)}\sum\limits_{\substack{A\in\mathfrak{T}_{W_i}(\arx,\y)\\\ind A=-1}}\#\left(\cM^A\right)\y.\end{equation}
We claim that this is the homotopy required by the sublemma. For those who care about gradings, note $\fH$ would have degree one less than the slide morphism $\ff$.

We now verify Equation~\ref{eq:htpyrlns} for $\fH$ as a homotopy from $\tilde\ff\circ\ff$ to the other map in Sublemma~\ref{sublem}, which we will call $\fg$. Recall there are two degenerations of the square. One corresponds to the break between the left two and right two triangles in Figure~\ref{varphi}. The ends of the moduli spaces counted in Equation~\ref{eq:homotopy} (except at index 0) coming from this degeneration contribute the term $\tilde\ff\circ\ff$. The other degeneration contributes the term $\fg$, because the inserted map corresponding to this degeneration is the identity map $\cP(\Sigma,\ba,\bb^H)\to\cP(\Sigma,\bb^H,\ba')$ on the nose: The fact that $\varphi_{\ba\ba^H\ba'}\left(\bth_{\ba\ba^H},\bth_{\ba^H\ba'}\right)=\bth_{\ba\ba'}$ follows just as in \cite[Proposition~11.3]{L}, so that 
\[\varphi_{\bb^H\ba\ba'}\left(\x,\varphi_{\ba\ba^H\ba'}\left(\bth_{\ba\ba^H},\bth_{\ba^H\ba'}\right)\right)=\varphi_{\bb^H\ba\ba'}\left(\x,\bth_{\ba\ba'}\right),\]
and $\varphi_{\bb^H\ba\ba'}\left(\x,\bth_{\ba\ba'}\right)$ counts the obvious unique holomorphic ``small triangle'' that exists for any $(\x,\boldsymbol{\theta}_{\ba\ba'})$, which outputs the unique representative in $\cP(\Sigma,\bb^H,\ba')$ of the same generator $\x$ (c.f. \cite[Proposition 9.8]{OS}). There is also the obvious correspondence of right and left degenerations in thick homology classes according to arcs in $W_i$ that are disjoint (that is, entirely to the right or to the left) from the inserted manifolds listed in its definition: the right degenerations contribute the second line of Equation~\ref{eq:htpyrlns} while the left degenerations contribute the last line.\end{proof}
\begin{proof}[Proof of Lemma~\ref{chainh}]By the same kind of argument as the proof of Sublemma~\ref{sublem}, The map that results from inserting 
\[\varphi_{\bb\bb^H\ba}\left(\bth_{\bb\bb^H},\varphi_{\bb^H\bb\ba}\left(\bth_{\bb^H\bb},\x\right)\right)\]
instead of interpolation maps as with previous morphisms is homotopic to the map one would get by inserting the triangle for $\varphi_{\bb'\bb\ba}$, which also induces the identity morphism because the triangle itself gives the identity map on generators. With an appropriate replacement of $\ba$ and $\bb$ symbols, the same argument shows the analogous result for $\ff\circ\tilde\ff$.\end{proof}
\end{section}

\ \\ \emph{University of Georgia, Athens\\ jdw.math@gmail.com}
\end{document}